\documentclass{amsart}

\usepackage{amsmath}
\usepackage{amssymb}
\usepackage{amsthm}
\usepackage{thmtools}
\usepackage{mathrsfs}
\usepackage{tikz}
\usepackage{enumerate}
\usepackage{multirow}

\usepackage{hyperref}
\usepackage{cleveref}
\usepackage{orcidlink}

\newcommand{\CC}{\mathbb C}
\newcommand{\II}{\mathscr I}

\newcommand{\kk}{\Bbbk}
\newcommand{\LL}{\mathfrak L}
\newcommand{\OO}{\mathscr O}
\newcommand{\PP}{\mathbb P}
\newcommand{\pp}{\mathfrak p}
\newcommand{\RR}{\mathbb R}
\newcommand{\ZZ}{\mathbb Z}

\newcommand{\vb}[1]{%
  \ifcat\noexpand#1\relax\boldsymbol{#1}\else\mathbf{#1}\fi}

\DeclareMathOperator{\aff}{aff}

\DeclareMathOperator{\expdim}{expdim}
\DeclareMathOperator{\He}{He}
\DeclareMathOperator{\lcm}{lcm}
\DeclareMathOperator{\relint}{relint}
\DeclareMathOperator{\Res}{Res}

\DeclareMathOperator{\Ter}{Ter}
\DeclareMathOperator{\Tr}{Tr}

\declaretheorem[numberwithin=section]{theorem}
\declaretheorem[sibling=theorem]{lemma}

\declaretheorem[style=definition,sibling=theorem]{definition}
\declaretheorem[style=definition,sibling=theorem]{example}

\declaretheorem[style=remark,sibling=theorem]{remark}

\title{Terracini loci of tangential varieties of Veroneseans}
\author{Douglas A. Torrance \orcidlink{0000-0003-3999-4973}}
\address{Department of Mathematical Sciences, Piedmont University, Demorest, GA 30535 USA}
\address{School of Mathematics, Georgia Institute of Technology, Atlanta, GA 30332 USA}
\email{dtorrance9@gatech.edu}

\subjclass[2020]{Primary 14N07; Secondary 14N05, 62R01}
\keywords{Terracini loci, secant varieties, tangential varieties, moment varieties, identifiability}

\begin{document}

\begin{abstract}
  The Terracini locus of a projective variety tells us the sets of points at which Terracini's lemma will fail to recover the dimension of the corresponding secant variety.  We study the Terracini loci of tangential varieties of Veronese varieties, classifying several cases, including a complete classification of the Terracini loci of tangential varieties to rational normal curves.  We also show that many Terracini loci involve only sets with ``intratangent pairs'' that lie on the same tangent space to the underlying Veronese variety.  Finally, we give some applications to algebraic statistics, showing that osculating varieties of Veronese varieties are moment varieties for local mixtures of location families and connecting Terracini loci to local identifiability.
\end{abstract}

\maketitle

\section{Introduction}

One of the most important tools in studying secant varieties is Terracini's lemma \cite{T11}, which tells us that we can study the $r$th secant variety of a projective variety by studying the span of its embedded tangent spaces at $r$ general smooth points.  But what if our points are not general?  The set of configurations of $r$ smooth points for which this span gives us something smaller than expected has become known as the $r$th \textit{Terracini locus} of the variety, and the study of these loci has become an active area of research in recent years \cite{B22b,B23,B26,BB24,BB24b,BC21,BC24,BBF25,BBS23,BV23,CG24,GSTT25,LM24}.

The natural first places to study Terracini loci are the projective varieties where the story of secant nondefectivity has been settled.  Consequently, much attention has been given to the Terracini loci of Veronese varieties, whose secant varieties have been known to be nondefective (in most cases) for over three decades thanks to Alexander and Hirschowitz \cite{AH95}.

Tangential varieties of Veronese varieties have also been an object of study in the secant nondefectivity literature.  A conjecturally complete list of defective cases was presented by Catalisano, Geramita, and Gimigliano \cite{CGG02}, Ballico verified it for ternary and quaternary forms \cite{B05}, and Bernardi, Catalisano, Gimigliano, and Id\`a established an inductive argument showing that if the conjecture was true for cubics, then it was true in general \cite{BCGI09}.  Abo and Vannieuwenhoven then verified the base cases, finishing the story \cite{AV18}.

In this paper, we begin to describe the Terracini loci of these varieties.  In \Cref{sec: chow-veronese varieties and tangent spaces}, we introduce a family of projective varieties, the \textit{Chow-Veronese varieties}, to which the Veronese varieties and their tangentials belong, and describe their tangent spaces.  In \Cref{sec: secant varieties and terracini loci}, we establish a number of facts about the Terracini loci of tangentials of Veronese varieties using elementary linear algebra.  Then in \Cref{inverse system section}, we review the correspondence established in \cite{CGG02} between the tangent spaces of these varieties and a certain family of zero-dimensional schemes, giving us an alternate description of our Terracini loci.  We take a short interlude in \Cref{sec: binary forms} to completely describe the Terracini loci of tangential varieties of rational normal curves, which we generalize in \Cref{sec: horace} to more variables via induction using the Horace lemma.

We close the paper with some applications of these concepts to algebraic statistics, building on an observation made by Grosdos and Wageringel relating tangential varieties of Veronese varieties to local mixtures \cite{GKW20}.  In \Cref{sec: moment varieties}, we review various statistical concepts, including moment varieties, mixtures, and local mixtures, and show that the moment varieties of local mixtures of location families are osculating varieties of Veroneseans.  Finally, in \Cref{sec: terracini loci of moment varieties}, we relate this to Terracini loci, establishing statistical versions of the Alexander-Hirschowitz and Abo-Vannieuwenhoven theorems and connecting Terracini loci to local identifiability.

\section{Chow-Veronese varieties and their tangent spaces}\label{sec: chow-veronese varieties and tangent spaces}

Let $\kk$ be an algebraically closed field of characteristic 0 and consider the polynomial ring $S=\kk[y_0,\ldots,y_n]$ with the usual grading.  For any positive integer $d$, let $S_d$ be the $d$th graded piece of $S$ as a $\kk$-vector space, and note that projective space $\PP(S_d)$ has dimension $\binom{n+d}{d}-1$.  In particular, $\PP(S_1)=\PP^n$.  We denote by $[g]$ the point in $\PP(S_d)$ corresponding to the $(n+1)$-ary $d$-ic $g\in S_d$.

\begin{definition}
  Given a positive integer $d$ and a partition $\vb d=(d_1,\ldots,d_k)$ of $d$, i.e., $d=d_1+\cdots + d_k$ and $d_1\geq \cdots \geq d_k\geq 1$, consider the map $\nu_{\vb d}:(\PP^n)^{\times k}\to\PP(S_d)$ defined by
  \begin{equation*}
    ([\ell_1],\ldots,[\ell_k])\mapsto[\ell_1^{d_1}\cdots\ell_k^{d_k}].
  \end{equation*}
  The image of this map is known as a \textit{Chow-Veronese variety}, which, abusing notation, we will denote by $\nu_{\vb d}(\PP^n)$.
\end{definition}

Note that by construction, $\dim\nu_{\vb d}(\PP^n)=kn$.  This family of varieties gets its name from the two extreme cases, the \textit{Chow variety} $\nu_{(1,\ldots,1)}(\PP^n)$ and the \textit{Veronese variety}, or \textit{Veronesean}, $\nu_{(d)}(\PP^n)$, which we will also denote $\nu_d(\PP^n)$.

For a projective variety $X$ and a point $p\in X$, we denote by $T_pX$ the embedded projective tangent space to $X$ at $p$ and by $\widehat T_pX$ the affine cone over $T_pX$.  The following result is widely used in the study of Chow-Veronese varieties.

\begin{lemma}[{\cite[Proposition 2.2]{CCGO17}}]
  \label{tangent space - chow-veronese}
  Consider $\ell_1,\ldots,\ell_k\in S_1$ and let $p=[\ell_1^{d_1}\cdots\ell_k^{d_k}]\in\nu_{\vb d}(\PP^n)$.  If $[\ell_1],\ldots,[\ell_k]$ are distinct points in $\PP^n$, then $p$ is smooth and
  \begin{equation*}
    \widehat T_p\nu_{\vb d}(\PP^n) = \ell_1^{d_1-1}\cdots\ell_k^{d_k-1}\sum_{i=1}^k\frac{\ell_1\cdots\ell_k}{\ell_i}S_1.
  \end{equation*}
\end{lemma}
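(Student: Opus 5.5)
The plan is to compute the image of the differential of the affine map $\widehat\nu_{\vb d}:S_1^{\times k}\to S_d$, $(\ell_1,\ldots,\ell_k)\mapsto \ell_1^{d_1}\cdots\ell_k^{d_k}$, at the given point. Then I will show that this image has dimension $kn+1$, which equals $\dim\widehat{\nu_{\vb d}(\PP^n)}$. Since $\widehat\nu_{\vb d}$ is a dominant morphism onto the affine cone, the image of the differential at any point is contained in the tangent space of the cone at the image point. Hence $\dim\widehat T_p\geq kn+1$ always, and by generic smoothness equality holds on a dense open set.

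The argument proceeds in three steps.

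\textbf{Differential.} Differentiating along the curve $\ell_i+t m$ with $m\in S_1$, and leaving the other factors fixed, gives
\[
d_i\,\ell_1^{d_1}\cdots\ell_i^{d_i-1}\cdots\ell_k^{d_k}\,m = d_i\,\ell_1^{d_1-1}\cdots\ell_k^{d_k-1}\,\frac{\ell_1\cdots\ell_k}{\ell_i}\,m .
\]
Summing over $i$ and using $d_i\neq0$ in characteristic $0$, the image of the differential is exactly
\[
V:=\ell_1^{d_1-1}\cdots\ell_k^{d_k-1}\sum_i \frac{\ell_1\cdots\ell_k}{\ell_i}S_1 .
\]

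\textbf{Dimension of $V$.} Dividing by the fixed nonzero form $\ell_1^{d_1-1}\cdots\ell_k^{d_k-1}$, it suffices to show that $W=\sum_i L_i S_1$ has dimension $k(n+1)-(k-1)=kn+1$, where $L_i=\prod_{j\neq i}\ell_j$. The expected relations are the $k-1$ independent ones $L_i\ell_i-L_j\ell_j=0$. To prove there are no others, suppose $\sum_i L_i m_i=0$ with $m_i\in S_1$. Reducing modulo $\ell_i$ kills every term except $L_i m_i$. Because the $[\ell_j]$ are distinct, $L_i$ is nonzero in the domain $S/(\ell_i)$, so $\ell_i\mid m_i$ and $m_i=c_i\ell_i$ for some scalar $c_i$. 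The relation then reads $(\sum_i c_i)\,\ell_1\cdots\ell_k=0$, so $\sum_i c_i=0$. The kernel of $(m_i)\mapsto\sum_i L_im_i$ is therefore exactly $(k-1)$-dimensional, and $\dim V=kn+1$.

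\textbf{Smoothness and equality.} This is the step I expect to be the main obstacle: going from "the image of the differential has full dimension" to "$p$ is a smooth point of the image variety." The difficulty is that $\nu_{\vb d}$ might be non-injective near $p$, so several branches could pass through $p$. Local injectivity handles this. By unique factorization, $\ell_1^{d_1}\cdots\ell_k^{d_k}$ determines the multiset of pairs $([\ell_i],d_i)$. So the fiber over $p$ consists of finitely many points, all obtained by permuting indices with equal $d_i$. The map is finite and, by the differential computation, unramified at each such point. Because permutations of indices with equal $d_i$ induce automorphisms of the source that commute with $\nu_{\vb d}$, all preimages give the same local image germ. The map from the quotient by this symmetric group is therefore injective near $p$ and immersive there. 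An injective, unramified, finite map onto a variety that is normal at $p$ is an isomorphism locally. To avoid a normality argument, I would instead argue directly: the tangent cone of $\widehat\nu_{\vb d}(S_1^k)$ at the point is the union of the images of the differentials over the fiber. These images all equal $V$, since $V$ is invariant under the relabeling. Hence the Zariski tangent space has dimension $kn+1=\dim$ of the cone, which forces smoothness, and $\widehat T_p=V$.
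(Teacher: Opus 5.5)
Your first two steps are correct. The image of the differential of $(\ell_1,\ldots,\ell_k)\mapsto\ell_1^{d_1}\cdots\ell_k^{d_k}$ is $V$, and your reduction modulo $\ell_i$ in the domain $S/(\ell_i)$ cleanly gives $\dim V=kn+1$. (The paper gives no argument here; it simply quotes \cite[Proposition 2.2]{CCGO17}.)

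The gap is in the third step. That is the only place where ``$p$ is smooth'' gets proved, and without it you only have $V\subseteq\widehat T_p\nu_{\vb d}(\PP^n)$. Your inference runs: the tangent cone is the union of the images of the differentials over the fiber, these all equal $V$, hence the Zariski tangent space has dimension $kn+1$. This is false in general. A tangent cone that is set-theoretically a linear space of dimension $\dim X$ does not bound the Zariski tangent space. Consider the tacnode $y(y-x^2)=0$, the image of two disjoint lines under $t\mapsto(t,0)$ and $t\mapsto(t,t^2)$. This map is finite and unramified at both preimages of the origin, and both differentials have image the $x$-axis. The tangent cone $y^2=0$ is set-theoretically that axis, yet the Zariski tangent space at the origin is the whole plane. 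What separates your situation from this one is your earlier remark that the $G$-translates of a preimage give the \emph{same} image germ. The tangent-cone sentence never uses it.

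The route you abandoned is the right one, and it needs no normality. Let $G$ be the group permuting indices with equal $d_i$, and let $q=([\ell_1],\ldots,[\ell_k])$.
\begin{enumerate}[(1)]
\item Because the $[\ell_i]$ are distinct, $G$ acts freely at $q$. So $Q=(\PP^n)^{\times k}/G$ is smooth at $\bar q$.
\item The induced finite surjection $\bar\nu\colon Q\to Y=\nu_{\vb d}(\PP^n)$ has injective differential at $\bar q$ by your rank count. Equivalently, $\mathfrak m_p\OO_{Q,\bar q}=\mathfrak m_{\bar q}$.
\item By unique factorization, $\bar\nu^{-1}(p)=\{\bar q\}$. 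So the finite $\OO_{Y,p}$-module $(\bar\nu_*\OO_Q)_p$ is just $\OO_{Q,\bar q}$, whose fiber $\OO_{Q,\bar q}/\mathfrak m_p\OO_{Q,\bar q}=\kk$ is generated by $1$.
\item Nakayama then makes $\OO_{Y,p}\to\OO_{Q,\bar q}$ surjective. It is injective because $\bar\nu$ is a surjection of integral varieties.
\end{enumerate}
Thus $\OO_{Y,p}\cong\OO_{Q,\bar q}$ is regular, $p$ is smooth, and $\widehat T_p\nu_{\vb d}(\PP^n)=V$.

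Over $\CC$ you can argue analytically instead. By properness, the germ of $Y$ at $p$ is $\nu_{\vb d}(N)$ for a small neighbourhood $N$ of $q$, since $\nu_{\vb d}(gN)=\nu_{\vb d}(N)$ for $g\in G$. And $\nu_{\vb d}|_N$ is an injective immersion, so its image is a smooth germ of dimension $kn$.
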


Some work has been done on describing the singular loci of Chow-Veronese varieties, and in particular the $n = 1$ case, which are also known as \textit{coincident root loci} \cite{C03,K12}.

Recall that for a smooth projective variety $X$ such as $\nu_d(\PP^n)$, its \textit{tangential variety} (or \textit{tangential developable}) is
\begin{equation*}
  \tau(X) = \bigcup_{p\in X}T_pX.
\end{equation*}

\begin{remark}\label{osculating variety}
  The tangential variety is the first nontrivial example of an \textit{osculating variety}, which is the (Zariski closure of the) union of the \textit{osculating spaces} of $X$.  In particular, if the affine cone $\widehat X$ of a smooth projective variety $X$ is locally parametrized at $p\in X$ by the smooth map $\varphi$, and if $[\varphi(q)]=p$, then the $s$th osculating space to $X$ at $p$ is
  \begin{equation*}
    T^{(s)}_pX = \PP\left(\left\langle\partial^{\vb\beta}\varphi(q):|\vb\beta|\leq s\right\rangle\right),
  \end{equation*}
  where for each vector $\vb\beta=(\beta_0,\ldots,\beta_{\dim X})$ of nonnegative integers, $\partial^{\vb\beta}$ is the standard partial derivative operator and $|\vb\beta|=\beta_0+\cdots+\beta_{\dim X}$.  Then the $s$th osculating variety of $X$ is
  \begin{equation*}
    \tau_s(X) = \overline{\bigcup_{p\in X}T^{(s)}_pX}.
  \end{equation*}
  We will return to osculating varieties in \Cref{sec: moment varieties}.
\end{remark}

By \Cref{tangent space - chow-veronese}, if $d\geq 2$, then $\tau(\nu_d(\PP^n))$ consists of points of the form $[\ell^{d-1}m]$ for $\ell,m\in S_1$, and so $\tau(\nu_d(\PP^n))=\nu_{(d-1,1)}(\PP^n)$.  In particular, $[\ell^{d-1}m]$ lies on a tangent line to $\nu_d(\PP^n)$ at $[\ell^d]$.

\begin{definition}
  Given a point $[\ell^{d-1}m]\in\tau(\nu_d(\PP^n))$ with $d\geq 3$, we will call the point $[\ell]\in\PP^n$ the \textit{base point} and $[m]\in\PP^n$ the \textit{direction point}.
\end{definition}

The singular points of $\tau(\nu_d(\PP^n))$ are exactly the points of $\nu_d(\PP^n)$ (except for $(d,n)=(2,1)$, where the tangential variety fills the ambient space), and so $[\ell^{d-1}m]$ is a smooth point of $\tau(\nu_d(\PP^n))$ precisely when $\ell$ and $m$ are not scalar multiples of one another.

Applying \Cref{tangent space - chow-veronese} again, we obtain the following special case.

\begin{lemma}\label{tangent space - tangential}
  If $d\geq 2$ and $p=[\ell^{d-1}m]\in\tau(\nu_d(\PP^n))$ with $[\ell]\neq[m]$, then
  \begin{equation*}
    \widehat T_p\tau(\nu_d(\PP^n)) = \ell^{d-2}(\ell S_1 + m S_1).
  \end{equation*}
\end{lemma}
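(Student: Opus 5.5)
This is a direct specialization of \Cref{tangent space - chow-veronese}, using the identification $\tau(\nu_d(\PP^n))=\nu_{(d-1,1)}(\PP^n)$ noted above. The plan is to substitute the partition $\vb d=(d-1,1)$, with $k=2$, $\ell_1=\ell$, $\ell_2=m$, into that lemma and simplify.

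First I check the hypotheses. For $d\geq 2$ the pair $(d-1,1)$ is a genuine partition of $d$, since $d-1\geq 1$. The point $p=[\ell^{d-1}m]$ is the image of $([\ell],[m])$ under $\nu_{(d-1,1)}$. Because $[\ell]\neq[m]$, the two points of $\PP^n$ are distinct, so \Cref{tangent space - chow-veronese} applies. It gives that $p$ is smooth and yields the formula below.

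Next I compute. The prefactor is $\ell_1^{d_1-1}\ell_2^{d_2-1}=\ell^{d-2}m^0=\ell^{d-2}$. The sum is
\begin{equation*}
  \sum_{i=1}^2\frac{\ell m}{\ell_i}S_1 = mS_1+\ell S_1.
\end{equation*}
Multiplying these gives $\widehat T_p\tau(\nu_d(\PP^n))=\ell^{d-2}(\ell S_1+mS_1)$, which is the claim.

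The only point needing care is the edge case $d=2$, which I expect to be the sole (minor) obstacle. Here $\ell^{d-2}=1$, and the formula reads $\ell S_1+mS_1$. This remains valid because the cited lemma imposes no restriction on the $d_i$ beyond $d_i\geq 1$. When $n=1$ this space is all of $S_2$, consistent with the remark that $\tau(\nu_2(\PP^1))$ fills the ambient space.
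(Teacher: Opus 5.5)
Your proposal is correct and takes the same route as the paper: it obtains this lemma by specializing \Cref{tangent space - chow-veronese} to the partition $(d-1,1)$ with $\ell_1=\ell$, $\ell_2=m$, using $\tau(\nu_d(\PP^n))=\nu_{(d-1,1)}(\PP^n)$. Your explicit checks of the hypotheses and of the $d=2$ case are details the paper leaves implicit.
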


We close this section with an observation that distinct points of $\tau(\nu_d(\PP^n))$ may share a tangent space.  In particular, this occurs when they lie on the same tangent line to $\nu_d(\PP^n)$.

\begin{lemma}\label{shared tangent space}
  Suppose $\ell,m_1,m_2\in S_1$ and $d\geq 2$ such that $[\ell^{d-1}m_1]$ and $[\ell^{d-1}m_2]$ are smooth and distinct.  Then $T_{[\ell^{d-1}m_1]}\tau(\nu_d(\PP^n))=T_{[\ell^{d-1}m_2]}\tau(\nu_d(\PP^n))$ if and only if $[\ell]$, $[m_1]$, and $[m_2]$ are collinear.
\end{lemma}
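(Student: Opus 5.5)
By \Cref{tangent space - tangential}, both affine tangent spaces have the form $\ell^{d-2}V_i$, where $V_i=\ell S_1+m_iS_1\subseteq S_2$. Multiplication by the nonzero form $\ell^{d-2}$ is injective on $S$, since $S$ is a domain. Hence the two tangent spaces coincide if and only if $V_1=V_2$ as subspaces of $S_2$. The plan is to translate the equality $V_1=V_2$ into a statement about the linear span $\langle \ell,m_1,m_2\rangle\subseteq S_1$. Collinearity of $[\ell],[m_1],[m_2]$ in $\PP^n$ means exactly that this span has dimension at most $2$. Because $[\ell]\neq[m_i]$ by smoothness, it means $m_2\in\langle \ell,m_1\rangle$.

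For the direction ($\Leftarrow$), suppose $m_2=a\ell+bm_1$. Then $b\neq0$, since $[m_2]\ne[\ell]$, and so $m_1\in\langle\ell,m_2\rangle$ as well. Therefore $m_2S_1\subseteq \ell S_1+m_1S_1$ and symmetrically, which gives $V_1=V_2$.

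For the direction ($\Rightarrow$), suppose $V_1=V_2$. Then $\ell m_2\in V_1$, and in fact $m_2^2\in V_1=\ell S_1+m_1S_1$. This means $m_2^2$ lies in the ideal $(\ell,m_1)\subseteq S$. That ideal is prime: after a linear change of coordinates with $\ell=y_0$ and $m_1=y_1$, which is possible since $\ell,m_1$ are linearly independent, the quotient is $\kk[y_2,\ldots,y_n]$. It follows that $m_2\in(\ell,m_1)$, and reading off degree one gives $m_2\in\langle\ell,m_1\rangle$. This is the desired collinearity.

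The only mildly delicate point is the forward direction. One must choose to test $V_1=V_2$ against the element $m_2^2$, or equivalently compare dimensions or use primality of the ideal, rather than against $\ell m_2$, which trivially lies in $V_1$. Once that is seen, the argument is routine. Distinctness of the two points plays no role beyond ensuring the statement is nontrivial, and the hypothesis $d\geq2$ is needed only to invoke \Cref{tangent space - tangential}.
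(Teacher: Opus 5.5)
Your proof is correct and follows the same route as the paper: both use \Cref{tangent space - tangential} to reduce the question to whether $\ell S_1+m_1S_1=\ell S_1+m_2S_1$, and then identify this with the equality $\langle\ell,m_1\rangle=\langle\ell,m_2\rangle$ in $S_1$. The only difference is that you justify the implication $\langle\ell,m_1\rangle S_1=\langle\ell,m_2\rangle S_1\Rightarrow\langle\ell,m_1\rangle=\langle\ell,m_2\rangle$, via $m_2^2\in(\ell,m_1)$ and the primality of that ideal, whereas the paper takes this step for granted.
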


\begin{proof}
  By \Cref{tangent space - tangential}, we see that $\widehat T_{[\ell^{d-1}m_1]}\tau(\nu_d(\PP^n)) = \ell^{d-2}LS_1$, where $L=\langle\ell, m_1\rangle$.  So we have the same tangent space when we also have $L=\langle\ell, m_2\rangle$, which occurs if and only if $[\ell]$, $[m_1]$, and $[m_2]$ all lie on the line $\PP(L)$.
\end{proof}

\section{Secant varieties and Terracini loci}\label{sec: secant varieties and terracini loci}

\begin{definition}
  Let $X\subset\PP^N$ be a variety.  For a positive integer $r$, the $r$th \textit{secant variety} of $X$ is the Zariski closure of the union of all the secant $(r-1)$-planes to $X$, i.e.,
\begin{equation*}
  \sigma_r(X) = \overline{\bigcup\{\langle p_1,\ldots,p_r\rangle:p_1,\ldots,p_r\in X\}}.
\end{equation*}
\end{definition}

The following result is an extremely important tool for dealing with secant varieties.

\begin{lemma}[Terracini's lemma \cite{T11}]
  If $p_1,\ldots,p_r\in X$ are smooth points in general position, then 
\begin{equation*}
  \dim\sigma_r(X) = \dim\left\langle T_{p_1}X,\ldots,T_{p_r}X\right\rangle.
\end{equation*}
\end{lemma}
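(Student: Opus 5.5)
The plan is the classical parametric argument. Let $X\subset\PP^N$ be irreducible of dimension $m$ over $\kk$, which has characteristic $0$. Consider the abstract join map
\begin{equation*}
  \Phi:\widehat X^{\times r}\times\kk^r\to\kk^{N+1},\qquad (v_1,\ldots,v_r,\lambda_1,\ldots,\lambda_r)\mapsto\sum_{i=1}^r\lambda_iv_i,
\end{equation*}
where $\widehat X$ is the affine cone over $X$. The closure of its image is the affine cone $\widehat\sigma_r(X)$, so $\dim\sigma_r(X)=\operatorname{rank}(d\Phi)-1$ at a general point of the source. This uses generic smoothness (or generic submersivity) of a dominant morphism onto its image, which is where characteristic $0$ enters. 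We may also restrict to points where every $v_i$ is a smooth point of $\widehat X$ and every $\lambda_i\neq0$, since these form a dense open set.

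Next we compute the image of $d\Phi$ at such a point $(v_1,\ldots,v_r,\lambda_1,\ldots,\lambda_r)$. Varying $v_i$ along $\widehat X$ contributes $\lambda_i\widehat T_{v_i}\widehat X=\widehat T_{p_i}X$, because $\lambda_i\neq0$ and the cone's tangent space at $v_i$ is the affine cone over $T_{p_i}X$. Varying $\lambda_i$ contributes $v_i$, which already lies in $\widehat T_{p_i}X$. Hence
\begin{equation*}
  \operatorname{im} d\Phi=\widehat T_{p_1}X+\cdots+\widehat T_{p_r}X,
\end{equation*}
and projectivizing gives $\dim\sigma_r(X)=\dim\langle T_{p_1}X,\ldots,T_{p_r}X\rangle$ for general $p_1,\ldots,p_r$.

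The only real subtlety is the first step. We need that the rank of the differential at a general point equals the dimension of the image. This holds because the rank of $d\Phi$ is lower semicontinuous and attains its generic value on a dense open set. On that open set, the generic smoothness theorem (Sard's lemma in characteristic $0$) identifies this rank with the dimension of the image closure. I would cite this standard fact rather than reprove it.

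Finally, a point $q$ in the image of a general point of the source corresponds to general $p_1,\ldots,p_r\in X$. This is because the projection $\widehat X^{\times r}\times\kk^r\to X^{\times r}$ is dominant, so a dense open set in the source maps to a dense subset of $X^{\times r}$. That justifies the phrase ``in general position'' in the statement.
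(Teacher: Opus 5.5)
The paper gives no proof of this lemma. It is quoted as a classical tool with a citation to Terracini \cite{T11}, so there is no internal argument to compare against. Your proposal supplies the standard modern proof, and it is correct:
\begin{itemize}
\item the closure of the image of $\Phi$ is the affine cone over $\sigma_r(X)$;
\item generic smoothness in characteristic $0$, together with lower semicontinuity of rank, identifies the generic rank of $d\Phi$ with $\dim\sigma_r(X)+1$;
\item at a point where every $v_i\neq 0$ is smooth on $\widehat X$ and every $\lambda_i\neq 0$, the image of $d\Phi$ is exactly $\widehat T_{p_1}X+\cdots+\widehat T_{p_r}X$.
\end{itemize}

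The one imprecision is in your last paragraph. Knowing that the image in $X^{\times r}$ is merely dense does not justify ``general position''; you need it to contain a dense open set. This follows from your own computation. The rank of $d\Phi$ at such a point depends only on $([v_1],\ldots,[v_r])$. Hence the maximal-rank locus is the full preimage of the set of $(p_1,\ldots,p_r)\in(X_{sm})^{\times r}$ where $\dim\sum_i\widehat T_{p_i}X$ is maximal. That set is open by semicontinuity, and nonempty because its preimage is. Alternatively, Chevalley's theorem gives the same conclusion.
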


\begin{definition}
  The \textit{expected dimension} of $\sigma_r(X)$ is
  \begin{equation*}
    \expdim\sigma_r(X) = \min\{r(\dim X + 1), N + 1\} - 1,
  \end{equation*}
  and its \textit{defect} is
  \begin{equation*}
    \delta_r(X) = \expdim\sigma_r(X) - \dim\sigma_r(X).
  \end{equation*}
  A secant variety with nonzero defect is \textit{defective}.  Otherwise, it is \textit{nondefective}.
\end{definition}

Looking at the statement of Terracini's lemma, we might wonder about our choice of $p_1,\ldots,p_r$.  If the secant variety is nondefective, then are there any configurations of $r$ smooth points that are not ``general enough'' for us to recover the dimension using their tangent spaces?  This leads us to the following definition.

\begin{definition}\label{definition of terracini locus}
  Let $X$ be a projective variety and $r$ a positive integer.  If $X_{sm}^{(r)}$ consists of the sets of $r$ distinct smooth points of $X$, then the $r$th \textit{Terracini locus} of $X$ is the variety
\begin{equation*}
  \Ter_r(X) = \overline{\left\{\{p_1,\ldots,p_r\}\in X_{sm}^{(r)}:\dim\left\langle T_{p_1}X,\ldots,T_{p_r}X\right\rangle<r(\dim X + 1)-1\right\}},
\end{equation*}
where the Zariski closure is taken inside $X_{sm}^{(r)}$.
\end{definition}

Now let us focus on the Terracini loci of the tangential varieties of Veroneseans.  The first place to look is for the cases where $\Ter_r(\tau(\nu_d(\PP^n)))=\tau(\nu_d(\PP^n))_{sm}^{(r)}$, which will happen in exactly two situations:
\begin{enumerate}[(a)]
\item If $\sigma_r(\tau(\nu_d(\PP^n)))$ is defective, then the inequality in the definition will be true for all sets of $r$ smooth points in general position, which is a dense subset of $\tau(\nu_d(\PP^n))_{sm}^{(r)}$.
\item If $r$ is \textit{superabundant}, i.e., $r>\frac{1}{2n+1}\binom{n+d}{d}$, then the inequality in the definition will always be true since the span of the tangent spaces can at most fill $\PP(S_d)$.
\end{enumerate}

The defectivity of secant varieties of tangential varieties of Veroneseans has been completely classified \cite{AV18}.  In particular, $\sigma_r(\tau(\nu_d(\PP^n)))$ is always nondefective unless
\begin{enumerate}[(a)]
  \item $d=2$, $n\geq 4$, and $r\in\left\{2,\ldots,\left\lfloor\frac{n}{2}\right\rfloor\right\}$ \cite[Proposition 3.3(ii)]{CGG02} or
  \item $d=3$, $n\in\{2,3,4\}$, and $r=n$ \cite[Propositions 3.2(i) and 3.4]{CGG02}.
  \end{enumerate}

  Combining the defective and superabundant cases and noting that there is some redundancy (i.e., we may remove from the list of defective cases the ones that are also superabundant), we now state these results using the language of Terracini loci.

\begin{lemma}[Abo-Vannieuwenhoven]\label{tangential secant variety defectivity}
  $\Ter_r(\tau(\nu_d(\PP^n)))=\tau(\nu_d(\PP^n))_{sm}^{(r)}$ if and only if
  \begin{enumerate}[(a)]
  \item $d=2$, $n\geq 6$, and $r\in\left\{2,\ldots,\left\lfloor\frac{1}{2n+1}\binom{n+2}{2}\right\rfloor\right\}$,
  \item $(d,n,r)=(3,2,2)$, or
  \item $r>\frac{1}{2n+1}\binom{n+d}{d}$.
  \end{enumerate}
\end{lemma}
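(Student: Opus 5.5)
The plan is to combine the two mechanisms identified just before the statement, defectivity and superabundance, and check the bookkeeping. Throughout, $X=\tau(\nu_d(\PP^n))$, so $\dim X = 2n$ and $N+1=\binom{n+d}{d}$.

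First I prove that $\Ter_r(X)=X_{sm}^{(r)}$ holds exactly when $\sigma_r(X)$ is defective or $r$ is superabundant.

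\emph{If $r$ is superabundant.} Then $r(2n+1)>\binom{n+d}{d}$. The span of any $r$ tangent spaces lies in $\PP(S_d)$, so its dimension is at most $\binom{n+d}{d}-1<r(2n+1)-1$. Hence every configuration satisfies the defining inequality.

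\emph{If $\sigma_r(X)$ is defective and $r$ is not superabundant.} Then $\expdim\sigma_r(X)=r(2n+1)-1$. By Terracini's lemma, a general configuration has tangent span of dimension $\dim\sigma_r(X)<r(2n+1)-1$. General configurations form a dense subset of $X_{sm}^{(r)}$, so the closure is everything.

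\emph{Converse.} Suppose $r$ is not superabundant and $\sigma_r(X)$ is nondefective. Then a general configuration has tangent span of dimension exactly $r(2n+1)-1$. The condition $\dim\langle T_{p_i}X\rangle<r(2n+1)-1$ is a rank condition, namely the vanishing of maximal minors of a matrix built from local parametrizations of the tangent spaces. It is therefore Zariski closed in $X_{sm}^{(r)}$. Since it fails at a general point and $X_{sm}^{(r)}$ is irreducible, the locus is a proper closed subset, so $\Ter_r(X)\neq X_{sm}^{(r)}$. The one point needing care is that the tangent spaces vary algebraically on the smooth locus, so the rank condition really is closed. \Cref{tangent space - tangential} gives the explicit spanning set $\ell^{d-1}y_j,\ \ell^{d-2}m y_j$ for the affine tangent space; these spanning vectors depend polynomially on $(\ell,m)$ and have constant rank $2n+1$ on the smooth locus, which is enough.

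Next I insert the Abo--Vannieuwenhoven classification quoted above and remove the defective cases that are already superabundant.

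\emph{Case $d=2$.} The defective range is $2\le r\le\lfloor n/2\rfloor$ with $n\ge4$. The non-superabundant values are those with $r\le \frac{1}{2n+1}\binom{n+2}{2}=\frac{(n+2)(n+1)}{2(2n+1)}$, which is roughly $n/4$, so this bound is the smaller one. I need to check that $\lfloor (n+1)(n+2)/(2(2n+1))\rfloor\le\lfloor n/2\rfloor$, which is a direct inequality. I also need to find when that floor is at least $2$: $(n+1)(n+2)\ge 4(2n+1)$ means $n^2-5n-2\ge0$, i.e. $n\ge6$. This explains why $n\ge6$ appears in (a). For $n=4,5$, every defective $r$ is superabundant and is therefore absorbed into (c).

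\emph{Case $d=3$, $r=n$.} For $n=2$, $\binom{5}{3}=10$ and $10/5=2$, so $r=2$ is not superabundant and gives (b). For $n=3$, $20/7<3$, and for $n=4$, $35/9<4$, so both are superabundant and absorbed into (c).

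I expect no serious obstacle; the main care is the closedness and irreducibility argument in the converse and the arithmetic in the $d=2$ case.
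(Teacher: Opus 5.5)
Your proposal is correct and takes the same approach as the paper: first show that the Terracini locus is everything exactly in the defective or superabundant cases, then prune from the Catalisano--Geramita--Gimigliano/Abo--Vannieuwenhoven list the defective cases that are already superabundant, and your arithmetic ($n^2-5n-2\ge 0$ forcing $n\ge 6$; $20/7<3$ and $35/9<4$) matches. Your closedness argument for the converse is valid but not needed: the set where the inequality holds already misses the dense open set from Terracini's lemma, so its closure is proper.
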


We now turn to the cases where $r$ is \textit{subabundant}, i.e., $r\leq\frac{1}{2n+1}\binom{n+d}{d}$, and $\sigma_r(\tau(\nu_d(\PP^n)))$ is nondefective.  Thanks to \Cref{tangent space - tangential}, we can say the following.

\begin{lemma}\label{tangential terracini locus - vector space}
  Suppose $d\geq 2$ and $\Gamma=\{[\ell_1^{d-1}m_1],\ldots,[\ell_r^{d-1}m_r]\}$ is a set of smooth points in $\tau(\nu_d(\PP^n))$.  Then $\Gamma\in\Ter_r(\tau(\nu_d(\PP^n)))$ if and only if
  \begin{equation*}
    \dim\sum_{i=1}^r\ell_i^{d-2}(\ell_iS_1+m_iS_1)<r(2n+1).
  \end{equation*}
\end{lemma}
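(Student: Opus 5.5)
The plan is to unwind \Cref{definition of terracini locus} and translate each projective tangent space into its affine cone using \Cref{tangent space - tangential}. Let $X=\tau(\nu_d(\PP^n))$. Each point of $\Gamma$ is smooth, so $[\ell_i]\neq[m_i]$. Since $X=\nu_{(d-1,1)}(\PP^n)$ has dimension $2n$, \Cref{tangent space - tangential} gives
\begin{equation*}
  \widehat T_{[\ell_i^{d-1}m_i]}X=\ell_i^{d-2}(\ell_iS_1+m_iS_1),
\end{equation*}
a linear subspace of $S_d$ of dimension $2n+1$. The projective span $\langle T_{p_1}X,\ldots,T_{p_r}X\rangle$ is the projectivization of the sum of these affine cones, so its projective dimension is
\begin{equation*}
  \dim\sum_{i=1}^r\ell_i^{d-2}(\ell_iS_1+m_iS_1)-1.
\end{equation*}
Therefore the defining inequality $\dim\langle T_{p_1}X,\ldots,T_{p_r}X\rangle<r(2n+1)-1$ is equivalent to the stated vector-space inequality.

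The one subtlety is the Zariski closure in \Cref{definition of terracini locus}. I would show that the set of $\Gamma$ satisfying the inequality is already closed in $X^{(r)}_{sm}$, so taking the closure adds nothing.

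For this, work locally on $X^{(r)}_{sm}$. Choose local lifts $(\ell_i,m_i)$ depending algebraically on the point, which is possible because $X=\nu_{(d-1,1)}(\PP^n)$ is the image of $\PP^n\times\PP^n$ and the map is finite onto smooth points away from the diagonal. The products $\ell_i^{d-2}\ell_i y_j$ and $\ell_i^{d-2}m_i y_j$ span the tangent cones, although they may be redundant. The sum is better described as the sum of the $r$ affine tangent spaces $\widehat T_{p_i}X$, each of constant dimension $2n+1$. These form a vector bundle (the tangent bundle of the cone), so locally we can pick a frame of $2n+1$ regular sections for each one. The condition "sum of dimensions $<r(2n+1)$" then says that all $r(2n+1)$-minors of an $r(2n+1)\times\binom{n+d}{d}$ matrix of regular functions vanish. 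This is a closed condition, and it is independent of the choice of lifts and frames.

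The main obstacle is making the local-frame argument precise: one has to check that the affine tangent spaces vary algebraically over the smooth locus, so that the rank condition is given by polynomial minors. This follows from smoothness of $X$ at these points. Alternatively, if the closure is regarded as harmless, as is standard in this literature, the statement reduces to the first paragraph.
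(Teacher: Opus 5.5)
Your argument is correct and is the paper's own route. The paper states the lemma as an immediate consequence of \Cref{tangent space - tangential} and the definition, passing from projective spans to affine cones exactly as in your first paragraph. Your additional semicontinuity argument, via the tangent bundle and Gauss map on the smooth locus, shows the locus is already closed in $X^{(r)}_{sm}$; this fills in a point the paper leaves implicit. Rest that argument on the vector-bundle frames rather than on local lifts $(\ell_i,m_i)$: for $d=2$ such lifts need not exist Zariski-locally, because $([\ell],[m])\mapsto[\ell m]$ is a connected double cover of the smooth locus.
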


This result immediately leads to an inductive technique that will be useful.  It is essentially a special case of \cite[Propositions 3.1 and 3.2]{GSTT25}.  But first, note that if $P\subset\PP^n$ with $P\cong\PP^k$, i.e., there exists a $(k+1)$-dimensional subspace $V\subset S_1$ such that $P=\PP(V)$, then we define $\Ter_r(\tau(\nu_d(P)))$ to be the set of all sets of $r$ smooth points in $\tau(\nu_d(P))$ whose Zariski tangent spaces in $\PP\left(S^dV\right)$ fail to span a linear space with the expected dimension, i.e., all $\{[\ell_1^{d-1}m_1],\ldots,[\ell_r^{d-1}m_r]\}$ for which $\ell_i,m_i\in V$ and
\begin{equation}\label{terracini locus of a subspace}
  \dim\sum_{i=1}^r\ell_i^{d-2}(\ell_iV + m_iV)<r(2k+1).
\end{equation}
We see that $\Ter_r(\tau(\nu_d(P)))\cong\Ter_r(\tau(\nu_d(\PP^k)))$ via any isomorphism between $P$ and $\PP^k$.

\begin{lemma}\label{tangential terracini locus - induction via vector spaces}
  Suppose $P\subset\PP^n$ is a linear space, $\Gamma$ is a set of $r$ smooth points in $\tau(\nu_d(\PP^n))$, and consider $\Gamma'\subset\Gamma\cap\tau(\nu_d(P))$ with $|\Gamma'|=s$.  If $\Gamma'\in\Ter_s(\tau(\nu_d(P)))$, then $\Gamma\in\Ter_r(\tau(\nu_d(\PP^n)))$.
\end{lemma}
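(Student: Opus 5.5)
The plan is to reduce everything to \Cref{tangential terracini locus - vector space} and count dimensions. Write $P=\PP(V)$ with $\dim V=k+1$. After relabeling, let $\Gamma'=\{[\ell_1^{d-1}m_1],\ldots,[\ell_s^{d-1}m_s]\}$ with $\ell_i,m_i\in V$ for $i\le s$. (Since each such point is a smooth point of $\tau(\nu_d(P))$, we can choose representatives $\ell_i,m_i\in V$ with $[\ell_i]\neq[m_i]$.) Let $W_i=\ell_i^{d-2}(\ell_iS_1+m_iS_1)$ be the affine tangent space from \Cref{tangent space - tangential}. Its subspace $W_i'=\ell_i^{d-2}(\ell_iV+m_iV)$ is the affine tangent space to $\tau(\nu_d(P))$ inside $S^dV$.

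The key step is a local dimension count. For each $i\le s$, we have $\dim W_i=2n+1$ and $\dim W_i'=2k+1$ (both by \Cref{tangent space - tangential}, applied in $\PP^n$ and in $P$). Hence $W_i=W_i'+C_i$ for a complement $C_i$ of dimension $2(n-k)$. Concretely, extend a basis of $V$ by linear forms $z_1,\ldots,z_{n-k}$ spanning a complement of $V$ in $S_1$, and take $C_i=\ell_i^{d-2}\langle \ell_i z_j, m_i z_j\rangle_j$. Its dimension is indeed $2(n-k)$, since $\ell_i^{d-2}\ell_i$ and $\ell_i^{d-2}m_i$ are independent.

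Then subadditivity does the rest:
\[
\dim\sum_{i=1}^r W_i\le \dim\sum_{i=1}^s W_i' + \sum_{i=1}^s\dim C_i+\sum_{i=s+1}^r\dim W_i.
\]
By hypothesis and \eqref{terracini locus of a subspace}, the first term is less than $s(2k+1)$. The remaining two terms are at most $2s(n-k)+(r-s)(2n+1)$. The total is therefore less than $s(2k+1)+2s(n-k)+(r-s)(2n+1)=r(2n+1)$. By \Cref{tangential terracini locus - vector space}, this gives $\Gamma\in\Ter_r(\tau(\nu_d(\PP^n)))$.

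The only delicate point is the justification that $\dim W_i=2n+1$ and $\dim W_i'=2k+1$. This is exactly the smoothness statement encoded in \Cref{tangent space - tangential} and the dimension $2n$ of the tangential variety. The same fact can be checked directly: $\ell^{d-2}(\ell S_1+mS_1)$ has dimension $2(n+1)-1$, because the only relation comes from $\ell\cdot m=m\cdot\ell$. I do not expect any other obstacles.
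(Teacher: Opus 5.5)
Your proof is correct and follows the paper's argument essentially verbatim. The paper splits $\sum_i \ell_i^{d-2}(\ell_iS_1+m_iS_1)$ along a complement $V'$ of $V$ into the same three pieces (your $\sum_{i\le s}W_i'$, $\sum_{i\le s}C_i$, and $\sum_{i>s}W_i$) and uses the same count $s(2k+1)+2s(n-k)+(r-s)(2n+1)=r(2n+1)$. One remark: the exact values $\dim W_i=2n+1$ and $\dim C_i=2(n-k)$ that you flag as the delicate point are not actually needed, since the argument only uses upper bounds, which follow immediately from counting spanning vectors.
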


\begin{proof}
  Suppose $P=\PP(V)$ for some $(k+1)$-dimensional subspace $V\subset S_1$.  Letting $V'$ be a complement of $V$ (i.e., $V\oplus V'=S_1$), we have
  \begin{align*}
    \sum_{i=1}^r\ell_i^{d-2}(\ell_iS_1 + m_iS_1) &= \sum_{i=1}^s\ell_i^{d-2}(\ell_iV + m_iV) + \sum_{i=1}^s\ell_i^{d-2}(\ell_iV' + m_iV') \\
                                                       &\quad + \sum_{i=s+1}^r\ell_i^{d-2}(\ell_iS_1 + m_iS_1).
  \end{align*}

  The first summand on the right hand side has dimension strictly less than $s(2k+1)$ by \Cref{terracini locus of a subspace}.  By counting parameters, the second summand has dimension at most $2s(n-k)$, and the third summand has dimension at most $(r-s)(2n+1)$ (noting that $\ell_im_i$ lies in both $\ell_iS_1$ and $m_iS_1$ for each $i$, so we only count it once).  Putting this together, the dimension is strictly less than $r(2n + 1)$, and the result follows by \Cref{tangential terracini locus - vector space}.
\end{proof}

\Cref{tangential terracini locus - induction via vector spaces} has a partial converse that will prove useful.

\begin{lemma}\label{not in terracini locus in smaller subspace implies not in terracini locus}
  Suppose $d\geq 3$, $P\subset\PP^n$ is a linear space, and $\Gamma$ is a set of $r$ smooth points in $\tau(\nu_d(\PP^n))$ whose base and direction points all lie in $P$.  If $\Gamma\not\in\Ter_r(\tau(\nu_d(P)))$, then $\Gamma\not\in\Ter_r(\tau(\nu_d(\PP^n)))$.
\end{lemma}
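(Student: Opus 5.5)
We must show that if $\dim\sum_i \ell_i^{d-2}(\ell_iV+m_iV)=r(2k+1)$, then $\dim\sum_i \ell_i^{d-2}(\ell_iS_1+m_iS_1)=r(2n+1)$, where $P=\PP(V)$, $\dim V=k+1$, and all $\ell_i,m_i\in V$. The plan is to reuse the splitting from the proof of \Cref{tangential terracini locus - induction via vector spaces}. Fix a complement $V'$ with $S_1=V\oplus V'$, so that
\begin{equation*}
  \sum_{i=1}^r\ell_i^{d-2}(\ell_iS_1+m_iS_1)=\sum_{i=1}^r\ell_i^{d-2}(\ell_iV+m_iV)+\sum_{i=1}^r\ell_i^{d-2}(\ell_iV'+m_iV').
\end{equation*}
By hypothesis the first summand has dimension $r(2k+1)$. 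We will show that the second summand has dimension $2r(n-k)$ and that the sum is direct. Together these give exactly $r(2n+1)$, and \Cref{tangential terracini locus - vector space} then finishes the proof.

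For the direct sum we use the grading by $V'$-degree. Choose coordinates so that $V=\langle y_0,\ldots,y_k\rangle$ and $V'=\langle y_{k+1},\ldots,y_n\rangle$. Every element of the first summand lies in $S^dV$, which is the part of $V'$-degree $0$. Every element of the second summand has $V'$-degree exactly $1$, because $\ell_i,m_i\in V$. These graded pieces meet trivially, so the sum is direct.

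For the dimension of the second summand, decompose it further along the basis $y_{k+1},\ldots,y_n$ of $V'$. The second summand equals
\begin{equation*}
  \bigoplus_{j=k+1}^n y_j\cdot W,\qquad W=\sum_{i=1}^r\ell_i^{d-2}\langle\ell_i,m_i\rangle\subset S^{d-1}V.
\end{equation*}
This reduces the problem to showing $\dim W=2r$, that is, that the $2r$ forms $\ell_i^{d-1},\ell_i^{d-2}m_i$ of degree $d-1$ in $S^{d-1}V$ are linearly independent.

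This independence is the main obstacle, and it is where $d\geq3$ and the hypothesis on $P$ enter. The plan is to derive it from nondegeneracy in $P$ via contraction (apolarity). Suppose $\sum_i\ell_i^{d-2}(a_i\ell_i+b_im_i)=0$ in $S^{d-1}V$. Multiplying by an arbitrary $v\in V$ yields the relation
\begin{equation*}
  \sum_i\ell_i^{d-2}(a_i\ell_iv+b_im_iv)=0
\end{equation*}
among elements of $\ell_i^{d-2}(\ell_iV+m_iV)$. Independence in degree $d$ requires the map $\bigoplus_i\ell_i^{d-2}(\ell_iV+m_iV)\to S^dV$ to be injective on the external direct sum, whose dimension is $r(2k+1)$. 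So for each $i$ the element $\ell_i^{d-2}(a_i\ell_i+b_im_i)v$ must be $0$. Since $\ell_i^{d-2}\neq0$ and $v$ may be taken nonzero, this forces $a_i\ell_i+b_im_i=0$, and as $\ell_i,m_i$ are independent, $a_i=b_i=0$.

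Note that $d\geq3$ guarantees $d-1\geq2$, so that $\ell_i^{d-2}m_i$ genuinely depends on $m_i$ and the reduction is meaningful. We also need to double-check that the external direct sum really has dimension $r(2k+1)$, i.e.\ that $\dim\ell_i^{d-2}(\ell_iV+m_iV)=2k+1$ because $\ell_iV\cap m_iV=\langle\ell_im_i\rangle$.
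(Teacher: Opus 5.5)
Your proof is correct and is essentially the paper's argument: the same splitting $T\oplus U$ with $T\cap U=0$ by $V'$-degree, the identification $U=WV'$, and $\dim W=2r$ obtained by multiplying a putative relation by $v\in V$ and contradicting the directness of $T$ forced by $\dim T=r(2k+1)$, which you justify more explicitly than the paper does. Two small remarks: the independence step does not actually use $d\geq 3$ (that hypothesis mainly makes base and direction points well defined), and for directness you only need the upper bound $\dim\ell_i^{d-2}(\ell_iV+m_iV)\leq 2k+1$, not equality.
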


\begin{proof}
  Let $V,V'$ be as in the proof of \Cref{tangential terracini locus - induction via vector spaces}, and assume $k < n$, or otherwise we are done.  Then
  \begin{equation*}
    \sum_{i=1}^r\ell_i^{d-2}(\ell_iS_1 + m_iS_1) = \sum_{i=1}^r\ell_i^{d-2}(\ell_iV + m_iV) + \sum_{i=1}^r\ell_i^{d-2}(\ell_iV' + m_iV').
  \end{equation*}
  The first summand $T$ has dimension $r(2k+1)$ by assumption.  Since it involves only variables in $V$, $T$ has only trivial intersection with the second summand, which we denote $U$.

  Now consider $W=\sum_{i=1}^r\ell_i^{d-2}\langle\ell_i,m_i\rangle$.  Note that $\dim W = 2r$, as otherwise there would exist coefficients $a_i,b_i\in\kk$ (not all zero) giving $\sum_{i=1}^r(a_i\ell_i^{d-1}+b_i\ell_i^{d-2}m_i)=0$.  But then $\sum_{i=1}^r(a_i\ell_i^{d-1}v+b_i\ell_i^{d-2}m_iv)=0$ for all $v\in V$, contradicting the assumption that $T$ has full dimension.  Finally, since $W$ involves only the variables in $V$ and $U=WV'$, we see that $\dim U=2r(n-k)$, and the result follows.
\end{proof}

Combining \Cref{tangential secant variety defectivity} and \Cref{tangential terracini locus - induction via vector spaces}, we obtain the following useful result.

\begin{lemma}\label{tangential secant variety defectivity - induction}
  Suppose $d\geq 2$, $P\subset\PP^n$ with $P\cong\PP^k$, $\Gamma$ is a set of $r$ smooth points in $\tau(\nu_d(\PP^n))$, at least $s$ of which lie in $\tau(\nu_d(P))$.  If any of the following are true:
  \begin{enumerate}[(a)]
  \item $d=2$, $k\geq 6$, and $s\in\left\{2,\ldots,\left\lfloor\frac{1}{2k+1}\binom{k+2}{2}\right\rfloor\right\}$,
  \item $(d,k,s)=(3,2,2)$, or
  \item $s>\frac{1}{2k+1}\binom{k+d}{d}$,
  \end{enumerate}
  then $\Gamma\in\Ter_r(\tau(\nu_d(\PP^n)))$.
\end{lemma}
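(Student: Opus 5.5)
The argument combines \Cref{tangential secant variety defectivity}, applied to the smaller projective space $P$, with \Cref{tangential terracini locus - induction via vector spaces}. Choose a subset $\Gamma'\subset\Gamma\cap\tau(\nu_d(P))$ with $|\Gamma'|=s$. Such a subset exists because at least $s$ points of $\Gamma$ lie in $\tau(\nu_d(P))$. Write $P=\PP(V)$ with $\dim V=k+1$, and fix an isomorphism $P\cong\PP^k$. As noted before \Cref{tangential terracini locus - induction via vector spaces}, this isomorphism identifies $\Ter_s(\tau(\nu_d(P)))$ with $\Ter_s(\tau(\nu_d(\PP^k)))$. It likewise identifies $\tau(\nu_d(P))$ with $\tau(\nu_d(\PP^k))$, since it is induced by a linear isomorphism $V\cong\kk^{k+1}$ and hence by $S^dV\cong S^d\kk^{k+1}$.

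The first step is to check that $\Gamma'$ is a set of $s$ distinct smooth points of $\tau(\nu_d(P))$. Each point of $\Gamma'$ has the form $[\ell^{d-1}m]$ with $[\ell]\neq[m]$, because it is smooth in $\tau(\nu_d(\PP^n))$. Moreover $\ell$ and $m$ may be taken in $V$. This holds because a point of $\PP(S^dV)$ of the form $\ell^{d-1}m$ has its factors in $V$ by unique factorization: if $\ell^{d-1}m\in S^dV$, then all its linear factors lie in $V$. So the point is smooth in $\tau(\nu_d(P))$ as well, by the smoothness criterion stated after the definition of base and direction points. One exception needs handling: when $(d,k)=(2,1)$ the tangential variety fills its ambient space. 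In that case, however, $\frac{1}{2k+1}\binom{k+d}{d}=1$, so the case is only relevant for hypothesis (c) with $s\ge 2$, and there $\Gamma'$ is superabundant anyway. The dimension count in \eqref{terracini locus of a subspace} still applies directly, since the sum of $s\geq 2$ spaces cannot exceed $\dim S^2V=3<s\cdot 3$.

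Next, under each of the hypotheses (a), (b), (c), \Cref{tangential secant variety defectivity} applied with $(d,k,s)$ in place of $(d,n,r)$ gives that $\Ter_s(\tau(\nu_d(\PP^k)))$ is the whole of $\tau(\nu_d(\PP^k))^{(s)}_{sm}$. Transporting this through the isomorphism gives $\Gamma'\in\Ter_s(\tau(\nu_d(P)))$. A small point deserves care here. The Terracini locus is defined as a closure, and in cases (a) and (b) the conclusion of \Cref{tangential secant variety defectivity} is that the closure is everything. What we need is the inequality \eqref{terracini locus of a subspace} for our particular $\Gamma'$, and this follows because that inequality is a closed condition. The condition $\dim<s(2k+1)$ is the vanishing of maximal minors, so it holds on the closure of the set where it holds. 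In case (c) the inequality holds trivially, since $s(2k+1)>\binom{k+d}{d}=\dim S^dV$.

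Finally, \Cref{tangential terracini locus - induction via vector spaces} gives $\Gamma\in\Ter_r(\tau(\nu_d(\PP^n)))$. The only real obstacle is bookkeeping: making precise the passage from "the Terracini locus is everything" to "the defining inequality holds at every point", which is handled by the closedness of the rank condition. Everything else is a direct application of the lemmas above.
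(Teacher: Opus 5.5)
Your proposal is correct and is essentially the paper's argument. The paper presents this lemma as an immediate consequence of applying \Cref{tangential secant variety defectivity} on $P\cong\PP^k$, with $(d,k,s)$ in place of $(d,n,r)$, and then \Cref{tangential terracini locus - induction via vector spaces}. Your extra checks fill in details the paper leaves implicit: that the points of $\Gamma'$ stay smooth in $\tau(\nu_d(P))$, and that the rank condition is closed, so ``the closure is everything'' really does give \eqref{terracini locus of a subspace} for your particular $\Gamma'$.
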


We now look at another source of points on the Terracini locus of a tangential variety of a Veronesean.  Given $\ell,m_1,m_2\in S_1$, note that $[\ell^{d-1}m_1]$ and $[\ell^{d-1}m_2]$, i.e., two points that share a base point, lie in the same tangent space $T_{[\ell^d]}\nu_d(\PP^n)$.  We call these an \textit{intratangent pair}.

\begin{lemma}\label{intratangent pair}
  Suppose $d\geq 2$, $n\geq 1$, $r\geq 2$, and $\Gamma$ is a set of $r$ smooth points in $\tau(\nu_d(\PP^n))$.  If $\Gamma$ contains an intratangent pair, then $\Gamma\in\Ter_r(\tau(\nu_d(\PP^n)))$.
\end{lemma}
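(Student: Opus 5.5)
By \Cref{tangential terracini locus - vector space}, it suffices to show that
\begin{equation*}
  \dim\sum_{i=1}^r\ell_i^{d-2}(\ell_iS_1+m_iS_1)<r(2n+1).
\end{equation*}
Reorder $\Gamma$ so that the intratangent pair is $[\ell^{d-1}m_1],[\ell^{d-1}m_2]$, with $\ell_1=\ell_2=\ell$. The plan is to bound the contribution of these two points by something strictly less than $2(2n+1)$. The remaining $r-2$ points contribute at most $(r-2)(2n+1)$, by the same count used in the proof of \Cref{tangential terracini locus - induction via vector spaces}: the summands $\ell_iS_1$ and $m_iS_1$ share $\ell_im_i$. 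Summing gives the claim.

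The key step is to estimate
\begin{equation*}
  U=\ell^{d-2}(\ell S_1+m_1S_1)+\ell^{d-2}(\ell S_1+m_2S_1)=\ell^{d-2}(\ell S_1+m_1S_1+m_2S_1).
\end{equation*}
Since multiplication by $\ell^{d-2}$ is injective, $\dim U=\dim(\ell S_1+m_1S_1+m_2S_1)$, which is at most $3(n+1)$. This must be compared with $2(2n+1)=4n+2$. For $n\geq 2$ we have $3n+3\leq 4n+2$ only with equality at $n=1$ excluded, so the naive count is not strict. Overlaps must therefore be subtracted.

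In $\ell S_1\cap m_1S_1$ we have $\ell m_1$. Likewise $\ell m_2$ lies in $\ell S_1\cap m_2S_1$, and $m_1m_2$ lies in $m_1S_1\cap m_2S_1$. I will arrange this as the image of the map
\begin{equation*}
  S_1^3\to S_2,\qquad (a,b,c)\mapsto \ell a+m_1b+m_2c.
\end{equation*}
Its kernel contains the three Koszul-type relations
\begin{equation*}
  (m_1,-\ell,0),\qquad (m_2,0,-\ell),\qquad (0,m_2,-m_1).
\end{equation*}
These are linearly independent when $\ell,m_1,m_2$ are independent. Hence $\dim U\leq 3(n+1)-3=3n$, and $3n<4n+2$ always holds.

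In the degenerate case, $[\ell],[m_1],[m_2]$ span only a line. Then $U=\ell^{d-2}\langle\ell,m_1\rangle S_1$, which has dimension at most $2n+1$, again strictly less than $4n+2$. In every case $\dim U<2(2n+1)$, and the result follows.

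The only delicate point is verifying the linear independence of the three kernel vectors, or, in the collinear case, invoking \Cref{shared tangent space} directly. In fact, only one kernel relation is needed to get strictness, since $3n+3-1<4n+2$ for $n\geq 1$. So the main obstacle is essentially trivial: I would just exhibit the relation $(m_1,-\ell,0)$, which is nonzero because the points are smooth, so $[\ell]\neq[m_1]$.
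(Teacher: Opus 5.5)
Your proof is correct, and it takes a more direct route than the paper's.

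The paper splits into two cases. If the pair lies on one tangent line, the two tangent spaces coincide by \Cref{shared tangent space}. Otherwise, it uses \Cref{tangential terracini locus - induction via vector spaces} to reduce to $n=r=2$, moves the pair to $[y_0^{d-1}y_1],[y_0^{d-1}y_2]$, and lists monomial bases of the two tangent spaces. Grassmann's formula then gives a span of dimension $5+5-4=6<10$.

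You instead work in $\PP^n$ with all $r$ points at once. Subadditivity bounds the other $r-2$ points by $(r-2)(2n+1)$. The pair contributes $\dim(\ell S_1+m_1S_1+m_2S_1)\le 3(n+1)-1<2(2n+1)$, using the single syzygy $(m_1,-\ell,0)$. This treats the collinear and non-collinear configurations uniformly, so you need no case split, no reduction lemma and no coordinate change. An even shorter version of your idea: both affine tangent spaces contain $\ell^{d-1}S_1\neq 0$, which already forces the sum below $2(2n+1)$.

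The paper's route gains a concrete monomial check that reuses the reduction machinery it built in Section 3. Your route is self-contained and shorter. Your sharper bound $3n$ in the non-collinear case is in fact exact, matching the paper's $6$ at $n=2$.

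Two small slips, neither affecting validity:
\begin{itemize}
\item Your sentence about the naive count is garbled. For $n\ge 2$ the bound $3n+3$ is already strictly below $4n+2$; strictness fails only at $n=1$. The syzygy argument covers every $n\ge 1$ anyway.
\item The vector $(m_1,-\ell,0)$ is nonzero simply because $\ell\neq 0$; smoothness of the points is not needed there.
\end{itemize}
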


\begin{proof}
  We have already proven the result in the case that both points lie on the same tangent line (see \Cref{shared tangent space}), so assume they share a base point but lie on distinct tangent lines.

  Using \Cref{tangential terracini locus - induction via vector spaces}, it is sufficient to prove the result for $n=r=2$.  Under a suitable change of coordinates, we may assume that our intratangent pair is $[y_0^{d-1}y_1]$ and $[y_0^{d-1}y_2]$.  Then $\widehat T_{[y_0^{d-1}y_1]}\tau(\nu_d(\PP^2))$ is spanned by $\{y_0^d, y_0^{d-1}y_1,y_0^{d-1}y_2,y_0^{d-2}y_1^2,\allowbreak y_0^{d-2}y_1y_2\}$ and $\widehat T_{[y_0^{d-1}y_2]}\tau(\nu_d(\PP^2))$ is spanned by $\{y_0^d, y_0^{d-1}y_1,y_0^{d-1}y_2,y_0^{d-2}y_1y_2,\allowbreak y_0^{d-2}y_2^2\}$.  By Grassmann's formula, we see that $\widehat T_{[y_0^{d-1}y_1]}\tau(\nu_d(\PP^2))+\widehat T_{[y_0^{d-1}y_2]}\tau(\nu_d(\PP^2))$ has dimension $5 + 5 - 4 = 6 < 2(2\cdot 2 + 1)$.
\end{proof}

In fact, unless the degree is small, intratangent pairs give us the only points of second Terracini loci.

\begin{lemma}\label{2nd terracini locus}
If $d\geq 5$, then $\Ter_2(\tau(\nu_d(\PP^n)))$ consists of only intratangent pairs.
\end{lemma}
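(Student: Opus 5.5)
Suppose $\Gamma=\{[\ell_1^{d-1}m_1],[\ell_2^{d-1}m_2]\}$ with $[\ell_1]\neq[\ell_2]$, so $\Gamma$ is not an intratangent pair. By \Cref{tangential terracini locus - vector space}, it suffices to show that
\begin{equation*}
  T_1+T_2:=\ell_1^{d-2}(\ell_1S_1+m_1S_1)+\ell_2^{d-2}(\ell_2S_1+m_2S_1)
\end{equation*}
has dimension $2(2n+1)$. Since each $T_i$ already has dimension $2n+1$, this amounts to proving $T_1\cap T_2=0$. Conversely, intratangent pairs are in the locus by \Cref{intratangent pair}, and the locus is closed. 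So once no non-intratangent pair lies in the locus, it consists only of intratangent pairs, up to the closure, which stays inside the closed set of pairs sharing a base point.

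\emph{Reduction.} All four linear forms lie in a subspace $V$ with $\dim V\le 4$, so $P=\PP(V)\cong\PP^k$ with $k\le 3$. By \Cref{not in terracini locus in smaller subspace implies not in terracini locus} (here $d\ge 3$), it suffices to prove $\Gamma\notin\Ter_2(\tau(\nu_d(P)))$. We may therefore assume $n\le 3$ and choose coordinates adapted to the configuration. With $\ell_1=y_0$ and $\ell_2=y_1$, we may take $m_1\in\{y_1,y_2\}$ and $m_2$ among $y_0,y_2,y_3$ or $y_2$ (when it coincides with the first case), up to a small number of cases. A cleaner route avoids the case split.

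\emph{Key step.} Suppose $f\in T_1\cap T_2$. Then $f$ is divisible by $y_0^{d-2}$, since every element of $T_1$ is, and likewise by $\ell_2^{d-2}$. Because $\ell_1$ and $\ell_2$ are coprime, $f$ is divisible by $\ell_1^{d-2}\ell_2^{d-2}$, which has degree $2d-4$. Since $f$ has degree $d$, this forces $f=0$ once $2d-4>d$, i.e.\ $d>4$. This is exactly where $d\ge 5$ enters. Hence $\dim(T_1+T_2)=2(2n+1)$ for every such pair, and the reduction step is not even needed.

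\emph{Main obstacle.} The substance lies in the closure bookkeeping rather than the algebra. We need $\Ter_2$ to equal the closure, taken in $\tau(\nu_d(\PP^n))^{(2)}_{sm}$, of the set of intratangent pairs. That set is already closed there, being given by the condition $[\ell_1]=[\ell_2]$ on base points, which are well defined and vary regularly on smooth points. I would state this explicitly, and note that the tangent-line case from \Cref{shared tangent space} is included among intratangent pairs.
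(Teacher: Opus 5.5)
Your proposal is correct and follows the paper's argument. The paper also takes coordinates with $\ell_1=y_0$, $\ell_2=y_1$ and observes that the two tangent spaces consist of multiples of $y_0^{d-2}$ and $y_1^{d-2}$ respectively, hence meet trivially; your coprimality and degree count ($2d-4>d$) is exactly the justification left implicit there, the reduction paragraph can be dropped as you note, and your closure remark fills in a small point the paper leaves unstated.
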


\begin{proof}
  Suppose $\Gamma=\{[\ell_1^{d-1}m_1],[\ell_2^{d-1}m_2]\}$ with $[\ell_1]\neq[\ell_2]$.  Using an appropriate change of coordinates, we may assume that $\ell_1=y_0$ and $\ell_2=y_1$.  Then $\widehat T_{[\ell_1^{d-1}m_1]}\tau(\nu_d(\PP^n))$ is spanned by multiples of $y_0^{d-2}$ and $\widehat T_{[\ell_2^{d-1}m_2]}\tau(\nu_d(\PP^n))$ is spanned by multiples of $y_1^{d-2}$.  These spaces meet trivially, and thus their sum has the expected dimension.
\end{proof}

With the above result, we may wonder what happens for second Terracini loci with smaller degree.  For quadrics, the answer of course is that the Terracini locus is everything by \Cref{tangential secant variety defectivity}.  So we take a moment to examine the cubic and quartic cases.

\begin{lemma}\label{2nd terracini locus - cubics}
  If $n\geq 2$, then $\Ter_2(\tau(\nu_3(\PP^n)))$ consists only of pairs of points whose base and direction points are coplanar.
\end{lemma}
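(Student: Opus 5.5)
We show that if the four points $[\ell_1],[m_1],[\ell_2],[m_2]$ span all of $\PP^3$ (so $n\geq 3$), then the pair is not in the Terracini locus. Once that is done, the statement reduces to a question inside the span of these four points. Let $\Gamma=\{[\ell_1^2m_1],[\ell_2^2m_2]\}$ and let $P$ be the span of $[\ell_1],[m_1],[\ell_2],[m_2]$. If $P$ has dimension at most $2$, the base and direction points are coplanar, and there is nothing to prove. Note that both points of an intratangent pair are covered here: two lines through a common point span at most a plane. So assume $P\cong\PP^3$. By \Cref{not in terracini locus in smaller subspace implies not in terracini locus} (which needs $d\geq 3$, fine here), it suffices to show $\Gamma\notin\Ter_2(\tau(\nu_3(\PP^3)))$.

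After a change of coordinates in $\PP^3$, the four independent linear forms become $\ell_1=y_0$, $m_1=y_1$, $\ell_2=y_2$, $m_2=y_3$. By \Cref{tangent space - tangential}, the two affine tangent spaces are
\begin{equation*}
  A=y_0\langle y_0,y_1\rangle S_1,\qquad B=y_2\langle y_2,y_3\rangle S_1,
\end{equation*}
and each has dimension $2\cdot 3+1=7$. We need $\dim(A+B)=14$, that is, $A\cap B=0$.

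Every monomial in $A$ is divisible by $y_0y_1$ or $y_0^2$. Every monomial in $B$ is divisible by $y_2y_3$ or $y_2^2$. A cubic monomial in both would have to be divisible by $y_0$ and by $y_2$, and also by one more factor from each side. Checking:
\begin{itemize}
  \item $y_0^2y_2$ is not in $B$, since it has no $y_2^2$ or $y_2y_3$ factor;
  \item $y_0y_1y_2$ is not in $B$ for the same reason;
  \item $y_0y_2^2$ and $y_0y_2y_3$ are not in $A$, since they have no $y_0^2$ or $y_0y_1$ factor.
\end{itemize}
So the monomials of $A$ and of $B$ are disjoint, and both spaces are spanned by monomials. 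Hence $A\cap B=0$ and the sum has the expected dimension, so $\Gamma$ is not in the Terracini locus.

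The only step that needs care is confirming that the preceding lemma's hypotheses apply, namely that all base and direction points lie in $P$ and that $d=3\geq 3$. The monomial computation itself is routine.

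One caveat: the statement asserts only containment (Terracini pairs are coplanar), not that every coplanar pair is Terracini. Our argument proves exactly that containment, taking the closure in the definition into account: the non-coplanar pairs form an open subset, and we have shown that none of them is in the Terracini locus.
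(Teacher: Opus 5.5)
Your argument is correct and is essentially the paper's: after normalizing to $\{[y_0^2y_1],[y_2^2y_3]\}$, the tangent spaces $\langle y_0^2,y_0y_1\rangle S_1$ and $\langle y_2^2,y_2y_3\rangle S_1$ are spanned by monomials with disjoint supports, so they meet trivially. The paper does this computation directly in $\PP^n$, so your reduction to $\PP^3$ via \Cref{not in terracini locus in smaller subspace implies not in terracini locus} is harmless but not needed, and the paper also records the converse in one line (coplanar pairs lie in the locus by case (b) of \Cref{tangential secant variety defectivity - induction} with $(d,k,s)=(3,2,2)$), which justifies the equality stated in \Cref{complete descriptions}.
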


\begin{proof}
  When the base and direction points are coplanar, we have case (b) of \Cref{tangential secant variety defectivity - induction}.  Suppose they are not, and thus $n\geq 3$.  Under a suitable change of coordinates, we may assume that our pair is $\{[y_0^2y_1], [y_2^2y_3]\}$.  Then $\widehat T_{[y_0^2y_1]}\tau(\nu_3(\PP^n))=\langle y_0^2,y_0y_1\rangle S_1$ and  $\widehat T_{[y_2^2y_3]}\tau(\nu_3(\PP^n))=\langle y_2^2,y_2y_3\rangle S_1$, and these spaces intersect trivially.
\end{proof}

Note that the base and direction points of intratangent pairs are necessarily coplanar, and so they are covered in \Cref{2nd terracini locus - cubics} without being explicitly stated.

\begin{lemma}\label{2nd terracini locus - quartics}
  For all positive $n$, $\Ter_2(\tau(\nu_4(\PP^n)))$ consists only of intratangent pairs and pairs of points whose base and direction points are collinear.
\end{lemma}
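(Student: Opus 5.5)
The plan is to prove the two inclusions separately, using the criterion of \Cref{tangential terracini locus - vector space}. By that lemma, a pair $\Gamma=\{[\ell_1^{3}m_1],[\ell_2^{3}m_2]\}$ lies in $\Ter_2(\tau(\nu_4(\PP^n)))$ exactly when the two affine tangent spaces $\ell_i^{2}(\ell_iS_1+m_iS_1)$ intersect nontrivially. Here we use that each tangent space has dimension $2n+1$, because the points are smooth.

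\emph{Step 1: the listed pairs lie in the Terracini locus.} Intratangent pairs are handled by \Cref{intratangent pair}. Suppose instead that $[\ell_1],[m_1],[\ell_2],[m_2]$ lie on a line $P\cong\PP^1$. Then both points lie in $\tau(\nu_4(P))$. Since $s=2>\frac13\binom{5}{4}=\frac53$, case (c) of \Cref{tangential secant variety defectivity - induction} applies with $k=1$, and it gives $\Gamma\in\Ter_2$.

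\emph{Step 2: all other pairs are not in the Terracini locus.} Assume $[\ell_1]\neq[\ell_2]$ and that the four points are not collinear. After a change of coordinates take $\ell_1=y_0$ and $\ell_2=y_1$. Any element of the intersection of the two tangent spaces is a quartic divisible by both $y_0^2$ and $y_1^2$. Since these are coprime, such an element must be a scalar multiple of $y_0^2y_1^2$. So the intersection is nontrivial if and only if $y_0^2y_1^2$ lies in both spaces, which means $y_1^2\in y_0S_1+m_1S_1$ and $y_0^2\in y_1S_1+m_2S_1$.

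\emph{Step 3: the key claim.} The main point is that $y_1^2\in y_0S_1+m_1S_1$ forces $y_1\in\langle y_0,m_1\rangle$. To see this, suppose $y_1\notin\langle y_0,m_1\rangle$; since $[m_1]\neq[y_0]$, we may change coordinates fixing $y_0,y_1$ so that $m_1=y_2$. The degree-2 part of the ideal $(y_0,y_2)$ contains no monomial $y_1^2$, so $y_1^2$ is not in it. (Equivalently, $y_1$ does not vanish on the linear space $\{y_0=m_1=0\}$.) Symmetrically, $y_0^2\in y_1S_1+m_2S_1$ forces $y_0\in\langle y_1,m_2\rangle$.

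Combining the two consequences of Step 3, both $m_1$ and $m_2$ lie in $\langle y_0,y_1\rangle$, so all four points lie on the line $\PP(\langle y_0,y_1\rangle)$. This contradicts our assumption. Hence the intersection is trivial, the sum of the tangent spaces has dimension $2(2n+1)$ by Grassmann's formula, and $\Gamma\notin\Ter_2$.

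For $n=1$ the collinearity condition holds automatically, so the statement holds trivially there, consistent with Step 1. I expect Step 3 to be the only point requiring care; it is just the elementary ideal-membership check described there.
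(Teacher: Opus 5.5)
Your proposal is correct and follows the paper's own proof. It uses case (c) of \Cref{tangential secant variety defectivity - induction} for the collinear configurations, then normalizes to $\ell_1=y_0$, $\ell_2=y_1$, so that the two tangent spaces can meet only in $\langle y_0^2y_1^2\rangle$, which would force $m_1,m_2\in\langle y_0,y_1\rangle$. Your Step 3 adds the ideal-membership check for that last implication, which the paper states without detail.
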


\begin{proof}
  When the base and direction points are collinear, we have case (c) of \Cref{tangential secant variety defectivity - induction} since $2 > \frac{1}{2\cdot 1 + 1}\binom{1+4}{4}=\frac{5}{3}$.  Suppose they are not, and thus $n\geq 2$, and that we do not have an intratangent pair.  Under a suitable change of coordinates, we may assume that our pair is $\{[y_0^3m_1], [y_1^3m_2]\}$ for some $m_1,m_2\in S_1$.  Then $\widehat T_{[y_0^3m_1]}\tau(\nu_4(\PP^n))=\langle y_0^3,y_0^2m_1\rangle S_1$ and  $\widehat T_{[y_1^3m_2]}\tau(\nu_4(\PP^n))=\langle y_1^3,y_1^2m_2\rangle S_1$, and these spaces might intersect only in $\langle y_0^2y_1^2\rangle$.  But this would require $m_1,m_2\in\langle y_0,y_1\rangle$, contradicting our non-collinearity assumption.
\end{proof}

As we see in the following results, the base points alone can determine whether a set of points lies in the Terracini locus or not.  We begin with a natural generalization of \Cref{2nd terracini locus}.

\begin{lemma}\label{linearly independent base points}
  Suppose $d\geq 5$ and consider a set $\Gamma$ consisting of $r$ smooth points of $\tau(\nu_d(\PP^n))$.  If the base points of $\Gamma$ are linearly independent, then $\Gamma\not\in\Ter_r(\tau(\nu_d(\PP^n)))$.
\end{lemma}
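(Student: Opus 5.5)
Since the base points $[\ell_1],\ldots,[\ell_r]$ are linearly independent, we have $r\le n+1$, and after a change of coordinates we may take $\ell_i=y_{i-1}$ for $i=1,\ldots,r$. By \Cref{tangential terracini locus - vector space}, it suffices to show that the sum $\sum_{i=1}^r T_i$, where
\begin{equation*}
  T_i = y_{i-1}^{d-2}(y_{i-1}S_1+m_iS_1),
\end{equation*}
has dimension $r(2n+1)$. Each $T_i$ has dimension $2n+1$, since the point is smooth (so $[m_i]\neq[y_{i-1}]$) and $y_{i-1}S_1\cap m_iS_1=\langle y_{i-1}m_i\rangle$. It is therefore enough to prove that the sum is direct.

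The plan is to exploit monomial supports, following the idea of \Cref{2nd terracini locus}. Every element of $T_i$ is $y_{i-1}^{d-2}q$ with $q\in S_2$, so every monomial in its support is divisible by $y_{i-1}^{d-2}$. Suppose $\sum_i f_i=0$ with $f_i\in T_i$. I will show that no monomial can occur in the supports of two different $T_i$ and $T_j$ with $i\neq j$. Such a monomial would be divisible by $y_{i-1}^{d-2}y_{j-1}^{d-2}$, which has degree $2(d-2)$. Since $d\geq 5$, we have $2(d-2)=2d-4>d$, so no monomial of degree $d$ is divisible by it.

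Hence the monomial supports of the $T_i$ are pairwise disjoint. A relation $\sum f_i=0$ then forces each $f_i=0$, because the monomials of $f_i$ cannot cancel against those of any $f_j$. This gives $\dim\sum T_i=\sum\dim T_i=r(2n+1)$, so $\Gamma\notin\Ter_r(\tau(\nu_d(\PP^n)))$.

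I expect no real obstacle here. The only point needing care is the degree inequality $2(d-2)>d$, which is exactly where the hypothesis $d\geq5$ enters; for $d=4$ the monomial $y_0^2y_1^2$ can be shared, as in \Cref{2nd terracini locus - quartics}. The other point to check is that the dimension of each individual $T_i$ is $2n+1$, which follows from smoothness as noted above.
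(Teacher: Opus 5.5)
Your proof is correct and follows the paper's argument: move the base points to coordinate variables, observe that each tangent space lies in $y_{i-1}^{d-2}S_2$, and conclude that the sum is direct. You also make explicit two details the paper leaves implicit, namely that $2(d-2)>d$ forces disjoint monomial supports and that smoothness gives each summand dimension $2n+1$.
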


\begin{proof}
  Using an appropriate change of coordinates, we may assume that $\ell_i=y_{i-1}$ for each $i\in\{1,\ldots,r\}$.  Then $\widehat T_{[y_{i-1}^{d-1}m_i]}\tau(\nu_d(\PP^n))\subset y_{i-1}^{d-2}S_2$, and the sum of all such tangent spaces is therefore direct.
\end{proof}

\begin{lemma}\label{terracini locus - base points on a hyperplane}
  Suppose $d\geq 3$, $n\geq 2$, and $r>\frac{1}{2n}\left[\binom{n + d - 2}{d - 1} + \binom{n + d - 1}{d}\right]$.  If $\Gamma\subset\tau(\nu_d(\PP^n))$ consists of $r$ smooth points whose base points all lie on a hyperplane of $\PP^n$, then $\Gamma\in\Ter_r(\tau(\nu_d(\PP^n)))$.
\end{lemma}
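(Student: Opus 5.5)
We argue by bounding the dimension of the span of the tangent spaces from above and invoking \Cref{tangential terracini locus - vector space}. After a change of coordinates we may assume the hyperplane containing all base points is $\{y_0=0\}$, so every $\ell_i$ lies in $V=\langle y_1,\ldots,y_n\rangle$. Write $R=\kk[y_1,\ldots,y_n]$, so $\dim R_e=\binom{n-1+e}{e}$. Then
\begin{equation*}
  \sum_{i=1}^r\ell_i^{d-2}(\ell_iS_1+m_iS_1)=\sum_{i=1}^r\ell_i^{d-1}S_1+\sum_{i=1}^r\ell_i^{d-2}m_iS_1 .
\end{equation*}

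Bound the first summand using $S_1=V\oplus\langle y_0\rangle$. We have $\ell_i^{d-1}V\subset R_d$ and $\ell_i^{d-1}y_0\in y_0R_{d-1}$. Hence the first summand has dimension at most $\dim R_d+\dim R_{d-1}=\binom{n+d-1}{d}+\binom{n+d-2}{d-1}$. Call this quantity $A$.

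The second summand is a sum of $r$ spaces. Each $\ell_i^{d-2}m_iS_1$ has dimension $n+1$, and each meets the first summand in at least $\ell_i^{d-1}m_i$. So modulo the first summand, the $i$th space contributes at most $n$ new dimensions, and the second summand adds at most $rn$.

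Combining the two bounds,
\begin{equation*}
  \dim\sum_{i=1}^r\ell_i^{d-2}(\ell_iS_1+m_iS_1)\leq A+rn .
\end{equation*}
The hypothesis $r>\frac{1}{2n}A$ gives $A<2rn$, so the total dimension is less than $2rn+rn$. That is not yet $<r(2n+1)$, so the crude bound $rn$ on the second summand is too weak. We must instead use $A$ to absorb the whole count: we need $\dim(\text{span})\le A+r$, since $A+r<2rn+r=r(2n+1)$ exactly matches the hypothesis.

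We get the sharper bound by splitting each $m_i$. Write $m_i=a_iy_0+v_i$ with $v_i\in V$. Then
\begin{equation*}
  \ell_i^{d-2}m_iS_1=\ell_i^{d-2}m_i\bigl(V\oplus\langle y_0\rangle\bigr).
\end{equation*}
For $w\in V$ we have $\ell_i^{d-2}m_iw=a_i\,y_0\ell_i^{d-2}w+\ell_i^{d-2}v_iw$. The second term lies in $R_d$, and the first lies in $y_0R_{d-1}$. So $\ell_i^{d-2}m_iw\in R_d\oplus y_0R_{d-1}$, the same space that contains the first summand. The only element outside it is $\ell_i^{d-2}m_iy_0$, which contributes one new dimension per point.

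So the full span lies in $(R_d\oplus y_0R_{d-1})+\langle \ell_i^{d-2}m_iy_0 : 1\le i\le r\rangle$. Its dimension is therefore at most $A+r<r(2n+1)$, and \Cref{tangential terracini locus - vector space} gives $\Gamma\in\Ter_r(\tau(\nu_d(\PP^n)))$.

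The main obstacle is precisely this refinement: seeing that all $V$-multiples of $\ell_i^{d-2}m_i$ land in $R_d\oplus y_0R_{d-1}$, so each point contributes only one dimension beyond it. Smoothness and the condition $d\ge3$ play no role in the estimate; they only ensure the setup of \Cref{tangential terracini locus - vector space} applies.
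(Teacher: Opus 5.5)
Your final argument is correct and is essentially the paper's proof. After moving the hyperplane to a coordinate hyperplane, both arguments sort by the power of the transverse variable: the parts of degree $0$ and $1$ in that variable are bounded by $\binom{n+d-1}{d}+\binom{n+d-2}{d-1}$, and the degree-$2$ part is bounded by the $r$ forms $y_0^2\ell_i^{d-2}$ (your $\ell_i^{d-2}m_iy_0$ modulo $R_d\oplus y_0R_{d-1}$).

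Your phrasing as containment of the span in $(R_d\oplus y_0R_{d-1})+\langle \ell_i^{d-2}m_iy_0\rangle$ is, if anything, slightly cleaner than the paper's. The paper bounds $V\cap y_n^2S'_{d-2}$, whereas what the count actually needs is the projection of $V$ onto that summand.
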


\begin{proof}
  Under a suitable change of coordinates, we may assume that for all $[\ell_i^{d-1}m_i]\in\Gamma$, $\ell_i\in\langle y_0,\ldots,y_{n-1}\rangle$.   Then the highest power of $y_n$ that might appear in $V = \sum_{i=1}^r\widehat T_{[\ell_i^{d-1}m_i]}\tau(\nu_d(\PP^n))$ is $y_n^2$ by \Cref{tangent space - tangential}.  Letting $S'=\kk[y_0,\ldots,y_{n-1}]$, we see that $V\subset y_n^2S'_{d-2}\oplus y_nS'_{d-1}\oplus S'_d$.  Since $V\cap y_n^2S'_{d-2}\subset\langle y_n^2\ell_1^{d-2},\ldots,y_n^2\ell_r^{d-2}\rangle$, we have
  \begin{align*}
    \dim V &\leq r + \binom{n + d - 2}{d - 1} + \binom{n + d - 1}{d} \\
           &< r + 2n r = r(2n + 1),
  \end{align*}
  and thus $\Gamma\in\Ter_r(\tau(\nu_d(\PP^n)))$ by definition.
\end{proof}

\begin{example}\label{3 collinear base points 5-2}
  Since $3 > \frac{1}{2\cdot 2}\left[\binom{2 + 5 - 2}{5 - 1} + \binom{2 + 5 - 1}{5}\right]$, we see by \Cref{terracini locus - base points on a hyperplane} that if $\Gamma$ consists of 3 points of $\tau(\nu_5(\PP^2))$ with collinear base points, then $\Gamma\in\Ter_3(\tau(\nu_5(\PP^2)))$.
\end{example}

\begin{remark}
  \Cref{terracini locus - base points on a hyperplane} provides a simple proof of \cite[Proposition 3.2(i)]{CGG02}.  Indeed, when  $(d,n,r)=(3,2,2)$, the inequality is satisfied.  And since any pair of base points will span a line, the Terracini locus consists of all possible pairs of smooth points, i.e., $\sigma_2(\tau(\nu_3(\PP^2)))$ is defective.  Furthermore, since any three points lie on a plane in $\PP^3$ and $3 + \binom{3 + 3 - 2}{3 - 1} + \binom{3 + 3 - 1}{3}=19 < 20 = \min\left\{3(2\cdot 3 + 1), \binom{3 + 3}{3}\right\}$, the proof also gives us $\sigma_3(\tau(\nu_3(\PP^3)))$ defective, and similarly, $\sigma_4(\tau(\nu_3(\PP^4)))$, recovering \cite[Proposition 3.4]{CGG02}.
\end{remark}

\begin{lemma}\label{terracini locus - veronese terracini base points}
  If $d\geq 2$, $\ell_1,\ldots,\ell_r\in S_1$, and $\{[\ell_1^d],\ldots,[\ell_r^d]\}\in\Ter_r(\nu_d(\PP^n))$, then for any $m_1,\ldots,m_r\in S_1$ for which $\Gamma=\{[\ell_1^{d-1}m_1],\ldots,[\ell_r^{d-1}m_r]\}\in\tau(\nu_d(\PP^n))_{sm}^{(r)}$, we also have $\Gamma\in\Ter_r(\tau(\nu_d(\PP^n)))$.
\end{lemma}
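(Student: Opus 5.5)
The plan is to compare the two tangent-space sums through the first-order structure $\ell_i^{d-1}S_1$. By \Cref{tangent space - chow-veronese} with $\vb d=(d)$, the affine tangent space to $\nu_d(\PP^n)$ at $[\ell_i^d]$ is $\ell_i^{d-1}S_1$. The hypothesis $\{[\ell_1^d],\ldots,[\ell_r^d]\}\in\Ter_r(\nu_d(\PP^n))$ will then be read as
\begin{equation*}
  \dim\sum_{i=1}^r\ell_i^{d-1}S_1<r(n+1).
\end{equation*}
The closure in the definition only adds limits, so I will first argue on the open set where this strict inequality holds and pass to closures at the end. Since the $[\ell_i]$ are distinct points of $\nu_d(\PP^n)$ in the Terracini locus, one small point must be checked: in the smooth-points condition for $\Gamma$, an intratangent pair ($[\ell_i]=[\ell_j]$) is already handled by \Cref{intratangent pair}. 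So I may assume the $[\ell_i]$ are distinct.

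The key step is a dimension count. Set $A=\sum_i\ell_i^{d-1}S_1$ and $B=\sum_i\ell_i^{d-2}(\ell_iS_1+m_iS_1)$, which is the space in \Cref{tangential terracini locus - vector space}, and note $A\subseteq B$. For each $i$, $\ell_i^{d-2}(\ell_iS_1+m_iS_1)=\ell_i^{d-1}S_1+\ell_i^{d-2}m_iS_1$, and $\ell_i^{d-2}m_iS_1$ has dimension $n+1$. Its intersection with $\ell_i^{d-1}S_1$ contains $\ell_i^{d-1}m_i$, so modulo $\ell_i^{d-1}S_1$ it contributes at most $n$ new dimensions. Summing over $i$ gives
\begin{equation*}
  \dim B\le \dim A + rn < r(n+1)+rn = r(2n+1),
\end{equation*}
which is exactly the criterion of \Cref{tangential terracini locus - vector space}. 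Hence $\Gamma\in\Ter_r(\tau(\nu_d(\PP^n)))$ whenever the base points lie in the open (non-closure) part of the Veronese Terracini locus.

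The main obstacle is the boundary of the closure: base-point configurations that are limits of Terracini configurations but might themselves satisfy $\dim A=r(n+1)$. Since the Terracini condition is a rank drop, the set where it holds is already closed among configurations of distinct points, so the closure should add nothing new and the issue disappears. If needed I will argue directly. Dimension of $B$ is lower semicontinuous, so the condition $\dim B<r(2n+1)$ is closed in the space of $(\ell_i,m_i)$ with $[\ell_i]\ne[m_i]$. Any $\Gamma$ over a limit of base configurations is then a limit of $\Gamma$'s over honest Terracini base configurations: lift the $m_i$ along a curve, keeping smoothness, which is an open condition. Such a $\Gamma$ therefore lies in the closed set $\Ter_r(\tau(\nu_d(\PP^n)))$.
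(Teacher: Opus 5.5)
Your argument is correct and is the same dimension count as the paper's: the paper writes $\ell_iS_1+m_iS_1=\ell_iS_1+m_iS_1^{(i)}$ with $S_1^{(i)}$ an $n$-dimensional complement of $\langle\ell_i\rangle$. The sum then splits as $\sum_i\widehat T_{[\ell_i^d]}\nu_d(\PP^n)$, of dimension $<r(n+1)$, plus a space of dimension at most $rn$, which is exactly your observation that $\ell_i^{d-2}m_iS_1$ meets $\ell_i^{d-1}S_1$ in $\ell_i^{d-1}m_i$. Your closure remark (a rank drop is already a closed condition) is sound extra care that the paper leaves implicit. Your distinctness reduction is unnecessary: elements of $\Ter_r(\nu_d(\PP^n))$ are sets of $r$ distinct points, so the $[\ell_i]$ are automatically distinct.
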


\begin{proof}
  For each $i$, let $S_1^{(i)}\subset S_1$ be an $n$-dimensional subspace satisfying $\langle \ell_i\rangle\oplus S_1^{(i)}=S_1$ so that $\ell_iS_1+m_iS_1 = \ell_iS_1 + m_iS_1^{(i)}$.  Then by \Cref{tangent space - chow-veronese},
  \begin{align*}
    \sum_{i=1}^r\widehat T_{[\ell_i^{d-1}m_i]}\tau(\nu_d(\PP^n)) &= \sum_{i=1}^r\ell_i^{d-2}\left(\ell_iS_1 + m_iS_1^{(i)}\right) \\
                                                                 &= \sum_{i=1}^r\widehat T_{[\ell_i^d]}\nu_d(\PP^n) +  \sum_{i=1}^r\ell_i^{d-2}m_iS_1^{(i)}.
  \end{align*}
  Since $\{[\ell_1^d],\ldots,[\ell_r^d]\}\in\Ter_r(\nu_d(\PP^n))$, the first summand has dimension strictly less than $r(n+1)$.  Since the second summand has dimension at most $rn$, the result follows.
\end{proof}

\begin{example}\label{3 collinear base points 4-3}
  Suppose $\Gamma\subset\tau(\nu_4(\PP^3))$ consists of 3 points with collinear base points.  The fourth powers of these base points form an element of $\Ter_3(\nu_4(\PP^3))$ by \cite[Theorem 6.6]{GSTT25}, and so by \Cref{terracini locus - veronese terracini base points}, $\Gamma\in\Ter_3(\tau(\nu_4(\PP^3)))$.
\end{example}

\begin{remark}
  \Cref{terracini locus - base points on a hyperplane,terracini locus - veronese terracini base points} each stands on its own.  Indeed, for \Cref{3 collinear base points 5-2}, \Cref{terracini locus - veronese terracini base points} does not help since $3<\frac{5 + 2}{2}$, and so $\Ter_3(\nu_5(\PP^2))=\emptyset$ by \cite[Theorem 1.1]{LM24}.  And for \Cref{3 collinear base points 4-3}, \Cref{terracini locus - base points on a hyperplane} does not help since $3\leq\frac{1}{2\cdot 3}\left[\binom{3+4-2}{4-1} + \binom{3+4-1}{4}\right]$.
\end{remark}

We close this section with a useful remark.

\begin{remark}\label{distinct points}
  If we would like to show that $\Gamma=\{[\ell_1^{d-1}m_1],\ldots,[\ell_r^{d-1}m_r]\}$ lies outside of $\Ter_r(\tau(\nu_d(\PP^n)))$, then we may assume that the $[\ell_i]$ and $[m_i]$ are $2r$ distinct points of $\PP^n$.  Indeed, if $[\ell_i]=[\ell_j]$ for any $i\neq j$, then we have an intratangent pair and $\Gamma$ certainly lies in the Terracini locus by \Cref{intratangent pair}.  If $[m_i]=[\ell_i]$ for some $i$, then we have a singular point.  And if $[m_i]=[\ell_j]$ or $[m_i]=[m_j]$ for some $i\neq j$, then we may simply choose another point on the line $\langle[\ell_i],[m_i]\rangle$ to replace $[m_i]$ by \Cref{shared tangent space}.
\end{remark}

\section{Inverse systems and $(2,3)$-point schemes}
\label{inverse system section}

We now will explore a beautiful connection between tangential varieties of Veroneseans and a particular family of fat point schemes first established in \cite{CGG02}.

Consider the polynomial ring $R=\kk[x_0,\ldots,x_n]$.  If we identify $x_i$ with $\frac{\partial}{\partial y_i}$, then each element of $R$ corresponds to a differential operator on $S$, which gives us the \textit{apolarity} action of $R$ on $S$.  Furthermore, given an ideal $I\subset R$, its \textit{inverse system} is the graded $R$-submodule of $S$ given by
\begin{equation*}
  I^{-1} = \{g\in S:f\cdot g = 0\:\:\forall f\in I\}.
\end{equation*}
See \cite[Lecture 2]{G96} for an excellent introduction to the theory of inverse systems.

We will also use $R$ as the homogeneous coordinate ring for $\PP^n$.  So in particular, the points in $\PP^n=\PP(S_1)$ will correspond to equivalence classes of linear forms in $S$ but the subschemes of $\PP^n$ will be cut out by homogeneous polynomials in $R$.

The following result fills in the details of an omitted ``easy computation'' from \cite{CGG02}.

\begin{lemma}\label{tangent space as inverse system}
  Suppose $n\geq 2$ and $\ell,m\in S_1$.  If $\pp\subset R$ is the ideal of the point $[\ell]\in\PP^n$ and $\LL\subset R$ the ideal of the line in $\PP^n$ through $[\ell]$ and $[m]$, then for all $d\geq 2$,
  \begin{equation*}
    \widehat T_{[\ell^{d-1}m]}\tau(\nu_d(\PP^n)) = (\LL^2 + \pp^3)^{-1}_d.
  \end{equation*}
\end{lemma}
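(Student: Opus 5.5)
Since both sides are $GL_{n+1}$-equivalent (apolarity is compatible with a change of coordinates acting contragrediently on $R$ and $S$), we first normalize. Smoothness forces $[\ell]\neq[m]$, so we may take $\ell=y_0$ and $m=y_1$. Then $\pp=(x_1,\ldots,x_n)$ and $\LL=(x_2,\ldots,x_n)$. By \Cref{tangent space - tangential}, the left side is
\begin{equation*}
  y_0^{d-2}(y_0S_1+y_1S_1)=\langle y_0^{d-1}y_j,\ y_0^{d-2}y_1y_j : 0\le j\le n\rangle .
\end{equation*}
This is the span of the monomials $y^{\vb a}$ of degree $d$ with $a_0\ge d-2$ satisfying one of two conditions: either $a_0\ge d-1$, or $a_0=d-2$ and $a_1\ge 1$.

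Next we note that $\LL^2+\pp^3$ is a monomial ideal. Its inverse system in each degree is therefore spanned by the monomials $y^{\vb a}$ that are annihilated by every monomial generator. Recall that $x^{\vb b}\cdot y^{\vb a}\neq 0$ exactly when $\vb b\le\vb a$ componentwise. Hence $y^{\vb a}\in(\LL^2+\pp^3)^{-1}$ if and only if $\vb a$ dominates no exponent of a generator. Concretely, this means two conditions hold:
\begin{itemize}
  \item[(i)] $a_1+\cdots+a_n\le 2$, which comes from $\pp^3$;
  \item[(ii)] $a_2+\cdots+a_n\le 1$, which comes from $\LL^2$.
\end{itemize}
The claim that the inverse system of a monomial ideal is spanned by monomials follows because $x^{\vb b}$ sends distinct monomials to scalar multiples of distinct monomials (or to zero), so annihilation can be checked monomial by monomial.

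It then remains to compare the two monomial lists in degree $d$. Condition (i) says $a_0\ge d-2$. If $a_0\ge d-1$, there is at most one other variable, and (ii) is automatic. If $a_0=d-2$, the remaining degree $2$ is $y_1^2$, $y_1y_j$, or $y_jy_k$ with $j,k\ge2$. Condition (ii) excludes exactly the last case, leaving $y_0^{d-2}y_1y_j$ for $j\ge 1$. This matches the description of the tangent space above. The hypothesis $d\ge2$ keeps $y_0^{d-2}$ meaningful, and $n\ge 2$ ensures $\LL$ is the ideal of a line properly contained in $\PP^n$.

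The only real obstacle is justifying the coordinate change. We must check that a linear change $g$ on $S_1$ carries the ideals $\pp$ and $\LL$ to the ideals of the transformed point and line under the dual action on $R$. We must also check that it preserves apolarity, i.e. $(g^{-T}f)\cdot(g\,h)=g(f\cdot h)$. This is the standard equivariance of the apolarity pairing. I would verify it on linear forms and extend multiplicatively, since derivations compose.
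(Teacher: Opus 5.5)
Your proposal is correct and follows the paper's proof. Both normalize to $\ell=y_0$, $m=y_1$, observe that the degree-$d$ inverse system of the monomial ideal $\LL^2+\pp^3$ is spanned by the monomials with $a_0\ge d-2$ and $a_0+a_1\ge d-1$, and match these against $y_0^{d-2}(y_0S_1+y_1S_1)$. The only differences are that you prove the monomial-spanning fact directly (via componentwise domination in characteristic $0$) instead of citing Geramita, and you spell out the equivariance of apolarity under the coordinate change, which the paper leaves implicit.
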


\begin{proof}
  Under an appropriate change of coordinates, we may assume that $\ell=y_0$ and $m=y_1$.  Then $\pp = (x_1,\ldots,x_n)$ and $\LL=(x_2,\ldots,x_n)$, and so $(\LL^2 + \pp^3)_d$ is spanned by monomials of the form $f=x_0^{d_0}\cdots x_n^{d_n}$ where
  \begin{enumerate}[(a)]
  \item $\frac{f}{x_0^{d_0}x_1^{d_1}}\in\LL^2$, i.e., $d_0+d_1\leq d-2$, or
  \item $\frac{f}{x_0^{d_0}}\in\pp^3$, i.e., $d_0\leq d - 3$.
  \end{enumerate}

  It follows by \cite[Remark 3, p. 17]{G96} that $(\LL^2 + \pp^3)^{-1}_d$ is spanned by monomials of the form $y_0^{d_0}\cdots y_n^{d_n}$, where $d_0+d_1\geq d - 1$ and $d_0\geq d - 2$.  So either
  \begin{enumerate}[(a)]
  \item $d_0=d-2$ and $d_1\geq 1$, giving us $y_0^{d-2}y_1y_i$ for any $i$, or
  \item $d_0\geq d-1$, giving us $y_0^{d-1}y_i$ for any $i$,
  \end{enumerate}
  which is exactly $\widehat T_{[y_0^{d-1}y_1]}\tau(\nu_d(\PP^n))$ by \Cref{tangent space - tangential}.
\end{proof}

This observation leads us to the following definition.

\begin{definition}\label{(2,3)-point definition}
  Suppose $p\in\PP^n$ is a reduced point and $L\ni p$ is a line.  If $\pp,\LL\subset R$ are their respective ideals, then the zero-dimensional scheme supported at $p$ and cut out by $\LL^2 + \pp^3$, i.e., the intersection of a double line and a triple point on that line, is a \textit{$(2,3)$-point}.
\end{definition}

By \cite[Proposition 2.6]{G96}, $I^{-1}+J^{-1}=(I\cap J)^{-1}$ for any ideals $I$ and $J$.  So together with \Cref{tangent space as inverse system}, we have a dictionary between sums of tangent spaces of $\tau(\nu_d(\PP^n))$ and $(2,3)$-point schemes.  In particular, for any set $\Gamma=\{[\ell_1^{d-1}m_1],\ldots,[\ell_r^{d-1}m_r]\}$ of smooth points in $\tau(\nu_d(\PP^n))$, we define the ideal
\begin{equation*}
  I_\Gamma = \bigcap_{i=1}^r\left(\LL_i^2 + \pp_i^3\right)
\end{equation*}
and the zero-dimensional scheme $Z_\Gamma = V(I_\Gamma)\subset\PP^n$, where $\pp_i$ is the ideal of $[\ell_i]$ and $\LL_i$ the ideal of $\langle[\ell_i],[m_i]\rangle$ for each $i$.

We can now describe the Terracini loci of $\tau(\nu_d(\PP^n))$ using these $(2,3)$-point schemes.  We restrict ourselves to the case where the $(2,3)$-points of $Z_\Gamma$ are supported at distinct points of $\PP^n$, which corresponds to $\Gamma$ containing no intratangent pairs.

\begin{lemma}\label{tangential terracini locus - sheaf cohomology}
  If $d,n\geq 2$ and $\Gamma$ is a set of $r$ smooth points in $\tau(\nu_d(\PP^n))$ containing no intratangent pairs, then $\Gamma\in\Ter_r(\tau(\nu_d(\PP^n)))$ if and only if
  \begin{equation*}
    h^1(\PP^n,\II_{Z_\Gamma}(d)) > 0.
  \end{equation*}
\end{lemma}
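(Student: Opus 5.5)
Our plan is to translate the linear-algebra criterion of \Cref{tangential terracini locus - vector space} into a statement about the Hilbert function of $Z_\Gamma$, and then into cohomology through the ideal sheaf sequence. By \Cref{tangent space as inverse system} and \cite[Proposition 2.6]{G96}, we have
\begin{equation*}
  \sum_{i=1}^r\widehat T_{[\ell_i^{d-1}m_i]}\tau(\nu_d(\PP^n)) = \sum_{i=1}^r(\LL_i^2+\pp_i^3)^{-1}_d = (I_\Gamma)^{-1}_d .
\end{equation*}
Apolarity is a perfect pairing $R_d\times S_d\to\kk$. Hence $\dim (I_\Gamma)^{-1}_d = \dim R_d - \dim (I_\Gamma)_d = H_{Z_\Gamma}(d)$, the Hilbert function of $R/I_\Gamma$ in degree $d$. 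By \Cref{tangential terracini locus - vector space}, $\Gamma\in\Ter_r(\tau(\nu_d(\PP^n)))$ if and only if $H_{Z_\Gamma}(d)<r(2n+1)$.

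Next we compute the length of $Z_\Gamma$. Each $(2,3)$-point has length $2n+1$: in the local coordinates used in the proof of \Cref{tangent space as inverse system}, the monomials outside $\LL^2+\pp^3$ in the affine chart $x_0=1$ are $1,x_1,\ldots,x_n,x_1^2,x_1x_2,\ldots,x_1x_n$. There are $1+n+n=2n+1$ of them, matching $\dim\widehat T=2n+1$. Since $\Gamma$ has no intratangent pairs, the supports $[\ell_i]$ are distinct. The scheme $Z_\Gamma$ is therefore a disjoint union, and $\deg Z_\Gamma=r(2n+1)$.

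Finally we use the exact sequence $0\to\II_{Z_\Gamma}(d)\to\OO_{\PP^n}(d)\to\OO_{Z_\Gamma}(d)\to0$. Taking cohomology, and using $h^1(\OO_{\PP^n}(d))=0$ and $h^0(\OO_{Z_\Gamma}(d))=\deg Z_\Gamma$, we get
\begin{equation*}
  h^1(\II_{Z_\Gamma}(d)) = \deg Z_\Gamma - \left(\dim R_d - h^0(\II_{Z_\Gamma}(d))\right).
\end{equation*}
We need $h^0(\II_{Z_\Gamma}(d))=\dim(I_\Gamma)_d$, i.e., that $I_\Gamma$ is saturated. This holds because each $\LL_i^2+\pp_i^3$ is $\pp_i$-primary, and an intersection of saturated ideals is saturated. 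Then $h^1 = r(2n+1)-H_{Z_\Gamma}(d)$, so $h^1>0$ exactly when $H_{Z_\Gamma}(d)<r(2n+1)$, which is the Terracini condition.

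The main point needing care is the saturation and primary-ness of $\LL^2+\pp^3$, together with the local length count. Both are routine in the monomial coordinates where $\pp=(x_1,\ldots,x_n)$ and $\LL=(x_2,\ldots,x_n)$. In these coordinates the ideal is generated by monomials in $x_1,\ldots,x_n$ alone, so $x_0$ is a nonzerodivisor modulo it, which gives saturation.
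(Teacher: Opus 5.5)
Your proposal is correct and follows essentially the same route as the paper. You identify $\sum_i \widehat T_{[\ell_i^{d-1}m_i]}\tau(\nu_d(\PP^n))$ with $(I_\Gamma)^{-1}_d$ via \Cref{tangent space as inverse system} and \cite[Proposition 2.6]{G96}, then use saturation of $I_\Gamma$ and $\deg Z_\Gamma=r(2n+1)$ in the ideal-sheaf sequence to get $\dim\sum_i\widehat T = r(2n+1)-h^1(\PP^n,\II_{Z_\Gamma}(d))$, and conclude by \Cref{tangential terracini locus - vector space}. The only differences from the paper are minor: you count the length of a $(2,3)$-point in the affine chart $x_0=1$ rather than via degree-$d$ monomials, and you phrase saturation via primariness and $x_0$ being a nonzerodivisor rather than via the ideal omitting $x_0$.
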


\begin{proof}
  We start with the following standard fundamental short exact sequence of sheaves:
\begin{equation*}
  0 \to \II_{Z_\Gamma} \to \OO_{\PP^n} \to \OO_{Z_\Gamma} \to 0
\end{equation*}
Twisting and taking cohomology, we have
\begin{align*}
  0 &\to H^0(\PP^n,\II_{Z_\Gamma}(d)) \to H^0(\PP^n,\OO_{\PP^n}(d)) \to H^0(\PP^n,\OO_{Z_\Gamma}(d)) \\
    & \to H^1(\PP^n,\II_{Z_\Gamma}(d)) \to H^1(\PP^n,\OO_{\PP^n}(d)) \to \cdots
\end{align*}

Note the following observations.
\begin{enumerate}[(a)]
\item The ideal of each $(2,3)$-point in $Z_\Gamma$ is saturated.  Indeed, consider the coordinate change from \Cref{tangent space as inverse system}, where our ideal is generated in $x_1,\ldots,x_n$.  Since it is missing $x_0$, saturating out the irrelevant ideal does nothing.  And since intersections preserve saturatedness, $I_\Gamma$ is itself saturated.  It follows that $I_\Gamma = H^0_*(\PP^n,\II_{Z_\Gamma})$, and so $h^0(\PP^n,\II_{Z_\Gamma}(d)) = h_{I_\Gamma}(d)$, the value at $d$ of the Hilbert function of $I_\Gamma$.
\item By \cite[Remark 1, p. 17]{G96}, we have $h_{I_\Gamma}(d) + h_{I_\Gamma^{-1}}(d) = h_R(d)$, or $h_{I_\Gamma}(d)=\binom{n+d}{d}-h_{I_\Gamma^{-1}}(d)$.
\item By \cite[Theorem III.5.1]{H77}, we have $h^0(\PP^n,\OO_{\PP^n}(d))=\binom{n+d}{d}$ and $h^1(\PP^n,\OO_{\PP^n}(d))=0$.
\item Since $Z_\Gamma$ is zero-dimensional, we know that $h^0(\PP^n,\OO_{Z_\Gamma}(d))=\deg Z_\Gamma$.  To find this, we first find the degree of a single $(2,3)$-point.  We may again fall back on the change of coordinates from \Cref{tangent space as inverse system}.  It suffices to count the degree $d$ monomials not in $(x_2,\ldots,x_n)^2+(x_1,\ldots,x_n)^3$, which are
  \begin{equation*}
    \underbrace{x_0^d}_1,\underbrace{x_0^{d-1}x_1,\ldots,x_0^{d-1}x_n}_n,\underbrace{x_0^{d-2}x_1^2,\ldots,x_0^{d-2}x_1x_n}_n,
  \end{equation*}
  and so the degree of each $(2,3)$-point is $2n+1$.  It follows that $\deg Z_\Gamma = r(2n+1)$ since their supports are distinct.
\end{enumerate}

Putting this all together, we have

\begin{gather*}
  \underbrace{\binom{n+d}{d}-h_{I_\Gamma^{-1}}(d)}_{h^0(\PP^n,\II_{Z_\Gamma}(d))} - \underbrace{\binom{n+d}{d}}_{h^0(\PP^n,\OO_{\PP^n}(d))} + \underbrace{r(2n+1)}_{h^0(\PP^n,\OO_{Z_\Gamma}(d))} - h^1(\PP^n,\II_{Z_\Gamma}(d)) = 0\text{, or} \\
   h_{I_\Gamma^{-1}}(d) =  r(2n+1) -h^1(\PP^n,\II_{Z_\Gamma}(d)).
\end{gather*}

However, by \Cref{tangent space as inverse system} and \cite[Proposition 2.6]{G96}, this is just
  \begin{equation*}
    \dim\sum_{i=1}^r\widehat T_{[\ell_i^{d-1}m_i]}\tau(\nu_d(\PP^n)) = r(2n+1) -h^1(\PP^n,\II_{Z_\Gamma}(d)),
  \end{equation*}
  and so the result follows by \Cref{tangential terracini locus - vector space}.
\end{proof}

Suppose $\Gamma$ has no intratangent pairs.  Note that $H^0(\PP^n,\II_{Z_\Gamma}(d))=(I_\Gamma)_d$ is the $\kk$-vector space of $d$-ics that vanish on $Z_\Gamma$, and by the above argument we have
\begin{equation*}
  h_{I_\Gamma}(d) = \binom{n+d}{d} - r(2n+1) + h^1(\PP^n,\II_{Z_\Gamma}(d)).
\end{equation*}
So in the language of interpolation, when $\Gamma\not\in\Ter_r(\tau(\nu_d(\PP^n)))$, we have the expected dimension $h_{I_\Gamma}(d) = \binom{n+d}{d} - r(2n+1)$ and say that $Z_\Gamma$ ``imposes independent conditions'' on $d$-ics.  However, if $\Gamma$ is in the Terracini locus, then the first cohomology is nonzero and we have more $d$-ics vanishing on $Z_\Gamma$ than we expected (provided that $r$ is subabundant).  In this case, $Z_\Gamma$ ``fails to impose independent conditions'' on $d$-ics.

\section{Interlude: Binary forms}\label{sec: binary forms}

Notice that in the previous section, we omitted the binary form case where $n=1$, since otherwise we could not define a $(2,3)$-point scheme.  Let us take a moment to remedy this situation and describe the Terracini loci of tangential varieties of rational normal curves.

We begin with an analogue of \Cref{tangent space as inverse system}.

\begin{lemma}\label{tangent space as inverse system - binary forms}
  Suppose $n=1$ and $\ell,m\in S_1$.  If $\pp\subset R$ is the ideal of the point $[\ell]\in\PP^1$, then for all $d\geq 2$,
  \begin{equation*}
    \widehat T_{[\ell^{d-1}m]}\tau(\nu_d(\PP^1))=(\pp^3)_d^{-1}.
  \end{equation*}
\end{lemma}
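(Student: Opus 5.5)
The argument mirrors the proof of \Cref{tangent space as inverse system}, with $\pp^3$ playing the role of $\LL^2+\pp^3$. We change coordinates so that $\ell=y_0$. Since $[\ell^{d-1}m]$ is a smooth point, $[m]\neq[\ell]$, and after rescaling and subtracting a multiple of $y_0$ we may take $m=y_1$. This matches the smoothness assumption in \Cref{tangent space - tangential}; if $[m]=[\ell]$ is allowed, the left-hand side is not defined by that lemma. Then $S_1=\langle y_0,y_1\rangle$ and $\pp=(x_1)$, so $\pp^3=(x_1^3)$. Hence $(\pp^3)_d$ is spanned by the monomials $x_0^{d_0}x_1^{d_1}$ with $d_0+d_1=d$ and $d_1\geq 3$, that is, $d_0\leq d-3$.

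Both the ideal and the tangent space are monomial in these coordinates, so we can invoke \cite[Remark 3, p. 17]{G96} as before. It says $(\pp^3)^{-1}_d$ is spanned by the monomials $y_0^{d_0}y_1^{d_1}$ of degree $d$ annihilated by every monomial of $(\pp^3)_d$. A degree-$d$ monomial $x^{\vb a}$ acts on a degree-$d$ monomial $y^{\vb b}$ as a nonzero constant when $\vb a=\vb b$ and as zero otherwise. Therefore $(\pp^3)^{-1}_d$ is spanned by the $y_0^{d_0}y_1^{d_1}$ with $d_0\geq d-2$, namely $y_0^d$, $y_0^{d-1}y_1$ and $y_0^{d-2}y_1^2$. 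If $d=2$, all three monomials of $S_2$ occur; this is consistent with $(\pp^3)_2=0$.

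On the other side, \Cref{tangent space - tangential} gives
\begin{equation*}
  \widehat T_{[y_0^{d-1}y_1]}\tau(\nu_d(\PP^1))=y_0^{d-2}(y_0S_1+y_1S_1)=y_0^{d-2}\langle y_0^2,y_0y_1,y_1^2\rangle,
\end{equation*}
which is spanned by exactly the same three monomials, and the equality follows. Alternatively, we could argue that $\ell S_1+mS_1=S_2$ because $\ell,m$ span $S_1$, so the tangent space is $\ell^{d-2}S_2$, and then recognize $\ell^{d-2}S_2$ as $(\pp^3)_d^{-1}$. Either way the only point needing care is that the change of coordinates is compatible with apolarity, i.e.\ that inverse systems transform contragrediently under $GL_2$. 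This was already used implicitly in \Cref{tangent space as inverse system}, so I expect no real obstacle; the proof is a short monomial count.
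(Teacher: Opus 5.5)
Your proposal is correct and follows the paper's proof: change coordinates to $\ell=y_0$, $m=y_1$, observe $\pp^3=(x_1^3)$ so that $(\pp^3)^{-1}_d$ is spanned by $y_0^d$, $y_0^{d-1}y_1$, $y_0^{d-2}y_1^2$, and match these with the spanning monomials of the tangent space given by \Cref{tangent space - tangential}. Your extra remarks (that $[m]\neq[\ell]$ is needed, the $d=2$ check, and that the coordinate change is compatible with apolarity) are sound details that the paper leaves implicit.
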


\begin{proof}
  Under a suitable change of coordinates, we may assume that $\ell=y_0$ and $m=y_1$.  Then $\pp^3=(x_1^3)$, and so $(\pp^3)_d^{-1}$ is spanned by all monomials of the form $y_0^{d_0}y_1^{d_1}$ with $d_1<3$, i.e., $\{y_0^d, y_0^{d-1}y_1,y_0^{d-2}y_1^2\}$.  But this is exactly $\widehat T_{[y_0^{d-1}y_1]}\tau(\nu_d(\PP^1))$ by \Cref{tangent space - tangential}.
\end{proof}

So we see that just as points in $\tau(\nu_d(\PP^n))$ correspond to $(2,3)$-point schemes in $\PP^n$ when $n\geq 2$, points in $\tau(\nu_d(\PP^1))$ correspond to triple point schemes in $\PP^1$.

Now consider a set $\{[\ell_1^{d-1}m_1],\ldots,[\ell_r^{d-1}m_r]\}$ of smooth points in $\tau(\nu_d(\PP^1))$ and let $\pp_i\subset R$ be the ideal of $[\ell_i]$.  Then $\pp_i=(h_i)$ for some linear form $h_i$, and so if $I$ is the ideal of our triple point scheme, then we have
\begin{equation*}
  I=\bigcap_{i=1}^r\pp_i^3 = \bigcap_{i=1}^r(h_i^3)=(\lcm\{h_1^3,\ldots,h_r^3\}).
\end{equation*}
But $\lcm\{h_1^3,\ldots,h_r^3\}$ is the product of the cubes of the distinct (up to scalar multiplication) $h_i$.  Without loss of generality, suppose $h_1,\ldots,h_s$ are distinct for some $s\leq r$, and so $I=(h_1^3\cdots h_s^3)$.  Note then that
\begin{equation*}
  I_d=\begin{cases}
    \{h_1^3\cdots h_s^3f:f\in R_{d-3s}\}&\text{if }d\geq 3s \\
    0 &\text{otherwise,}
    \end{cases}
  \end{equation*}
  and so $h_I(d)=\max\{d - 3s + 1, 0\}$.  It follows then by \Cref{tangent space as inverse system - binary forms} and \cite[Proposition 2.6]{G96} that
  \begin{equation*}
    \dim\sum_{i=1}^r\widehat T_{[\ell_i^{d-1}m_i]}\tau(\nu_d(\PP^1))=\min\{3s, d + 1\},
  \end{equation*}
  which is the expected dimension in the subabundant case only when $s=r$, i.e., all of the $h_i$ (and thus the $\ell_i$, since $h_i=ax_0+bx_1$ implies $\ell_i=by_0-ay_1$, or a scalar multiple thereof) are distinct up to scalar multiplication.

  If they are not distinct, say $\ell_i$ and $\ell_j$ are scalar multiples of one another, then $[\ell_i^{d-1}m_i]$ and $[\ell_j^{d-1}m_j]$ are two points on the same tangent line to the rational normal curve at $[\ell_i^d]$, i.e., they form an intratangent pair.

  We now summarize what this means for Terracini loci in the following generalization of \cite[Proposition 3.1]{CGG02}.

  \begin{theorem}\label{tangential terracini locus - binary forms}
    A set $\Gamma$ of $r$ smooth points in $\tau(\nu_d(\PP^1))$ lies in $\Ter_r(\tau(\nu_d(\PP^1)))$ if and only if
    \begin{enumerate}[(a)]
    \item $r>\frac{d+1}{3}$ or
    \item $\Gamma$ contains an intratangent pair.
    \end{enumerate}
  \end{theorem}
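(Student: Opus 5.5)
The plan is to read the theorem off the computation carried out just before it, using \Cref{tangential terracini locus - vector space} as the criterion. Write $\Gamma=\{[\ell_1^{d-1}m_1],\ldots,[\ell_r^{d-1}m_r]\}$ and let $s$ be the number of distinct points among $[\ell_1],\ldots,[\ell_r]$. By \Cref{tangent space as inverse system - binary forms} and \cite[Proposition 2.6]{G96}, the sum of the affine tangent spaces is $(I)^{-1}_d$ with $I=(h_1^3\cdots h_s^3)$. The Hilbert function computation above then gives
\begin{equation*}
  \dim\sum_{i=1}^r\widehat T_{[\ell_i^{d-1}m_i]}\tau(\nu_d(\PP^1))=\min\{3s,d+1\}.
\end{equation*}
Since $n=1$, the expected affine dimension of the span is $3r$. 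So $\Gamma\in\Ter_r(\tau(\nu_d(\PP^1)))$ exactly when $\min\{3s,d+1\}<3r$, and the proof reduces to analysing this inequality.

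For the ``if'' direction, I would treat the two cases separately. If $r>\frac{d+1}{3}$, then $d+1<3r$, so the minimum is below $3r$ and $\Gamma$ lies in the locus. This is just the superabundant case (c) of \Cref{tangential secant variety defectivity}. If $\Gamma$ contains an intratangent pair, then two of the $\ell_i$ are proportional, so $s<r$ and $3s<3r$. Alternatively, one can simply cite \Cref{intratangent pair}.

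For the ``only if'' direction, suppose neither (a) nor (b) holds. Then $3r\le d+1$, and the $[\ell_i]$ are pairwise distinct. Distinct $[\ell_i]$ correspond to distinct $[h_i]$ via $h_i=ax_0+bx_1\leftrightarrow\ell_i=by_0-ay_1$, so $s=r$. Hence $\min\{3s,d+1\}=3r$, and $\Gamma\notin\Ter_r$.

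The main thing to check carefully is the closure in \Cref{definition of terracini locus}. The set of $\Gamma$ satisfying (a) or (b) must be closed in $\tau(\nu_d(\PP^1))_{sm}^{(r)}$, so that taking closure adds nothing. Condition (a) either holds for every $\Gamma$ or for none. Condition (b) is the closed condition that some two base points coincide: the base point $[\ell]$ is a regular function of the smooth point $[\ell^{d-1}m]$, since it is the unique repeated root. This confirms that the locus is exactly as stated.

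A second point to verify is the case $d=2$: the formula $h_I(d)=\max\{d-3s+1,0\}$ still applies. Also, when $d\le 2$ in the degenerate range, condition (a) already covers every $r\ge1$, so this case needs no separate treatment.
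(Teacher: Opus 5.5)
Your proposal is correct and is essentially the paper's own argument: the paper likewise reads the theorem off the triple-point computation $\dim\sum_i\widehat T_{[\ell_i^{d-1}m_i]}\tau(\nu_d(\PP^1))=\min\{3s,d+1\}$, compares it with $3r$, and identifies $s<r$ with the presence of an intratangent pair. Your closure check is a harmless addition, but your remark that (a) covers every $r\ge1$ when $d=2$ is off for $r=1$, where $r>\frac{d+1}{3}$ fails; that case is still handled by your main computation, since $\min\{3,3\}=3r$ there.
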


\section{The Horace lemma}\label{sec: horace}

We now introduce the widely-used \textit{Horace lemma} \cite{H81,H85} that we will use to study the Terracini loci of tangential varieties of Veroneseans.  It gets its name from the Roman legend of Publius Horatius, who cunningly separated three brothers from the rival city of Alba Longa so that he could kill them one-by-one.  Instead of slaying three brothers, we will instead compute cohomologies for the ideal sheaves of three schemes.

Consider a projective scheme $Z$ and a hyperplane $H$, both in $\PP^n$.  If $I$ is the saturated ideal of $Z$ and $H$ is cut out by the linear form $h\in R_1$, then there exists an injective map $(I:(h))(-1)\to I$ of graded $R$-modules defined by multiplication by $h$.  The image of this map is $I\cap(h)$, and by the second isomorphism theorem, its cokernel is isomorphic to $(I + (h))/(h)$.  This gives rise to the following exact sequence of graded $R$-modules.
\begin{equation*}
  0 \to (I:(h))(-1) \to I \to (I + (h))/(h) \to 0
\end{equation*}

The scheme defined by $I:(h)$ is known as the \textit{residual} of $Z$ with respect to $H$, denoted $\Res_H Z$.  This is the scheme-theoretic set difference, although set-theoretically, we will see that some points in the support of $Z$ that also lie in $H$ may remain in the support of the residual.  Furthermore, the scheme defined by $(I + (h))/(h)$ is the \textit{trace}, denoted $\Tr_H Z$.  This is the scheme-theoretic intersection of $Z$ and $H$ viewed as a subscheme of $H$.

So sheafifying and twisting by $d$, we obtain the \textit{Castelnuovo exact sequence}.

\begin{equation*}
  0 \to \II_{\Res_HZ}(d - 1) \to \II_Z(d) \to \II_{\Tr_H Z}(d) \to 0
\end{equation*}

This leads directly to the Horace lemma.

\begin{lemma}[Horace lemma]\label{horace lemma}
  Suppose $Z$ is a closed subscheme of $\PP^n$ and $H$ is a hyperplane.  If $H^i(\PP^n,\II_{\Res_HZ}(d-1))=H^i(H,\II_{\Tr_HZ}(d))=0$ for some $d\in\ZZ$ and $i\in\ZZ_{\geq 0}$, then $H^i(\PP^n,\II_Z(d))=0$.
\end{lemma}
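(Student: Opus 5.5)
The plan is to take the long exact sequence in cohomology of the Castelnuovo exact sequence
\begin{equation*}
  0 \to \II_{\Res_HZ}(d - 1) \to \II_Z(d) \to \II_{\Tr_H Z}(d) \to 0
\end{equation*}
and read off the vanishing directly. First I will make sure the sequence is exact as a sequence of sheaves on $\PP^n$, which is where the work lies. Here $\II_{\Tr_HZ}(d)$ denotes the sheaf on $H$ pushed forward to $\PP^n$ by the closed immersion $j\colon H\hookrightarrow\PP^n$. Exactness follows from the graded sequence
\begin{equation*}
  0 \to (I:(h))(-1) \to I \to (I + (h))/(h) \to 0
\end{equation*}
given just before the statement: sheafification is an exact functor, twisting by $\OO(d)$ preserves exactness, and the sheaf associated to the graded $R/(h)$-module $(I+(h))/(h)$ is $j_*$ of the ideal sheaf of $\Tr_HZ$ in $H$. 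A possible subtlety is that $(I+(h))/(h)$ need not be saturated in $R/(h)$. This does not matter, however, since saturation does not change the associated sheaf.

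Next I will take the long exact sequence in cohomology and isolate the segment
\begin{equation*}
  H^i(\PP^n,\II_{\Res_HZ}(d-1)) \to H^i(\PP^n,\II_Z(d)) \to H^i(\PP^n,j_*\II_{\Tr_HZ}(d)).
\end{equation*}
Because $j$ is a closed immersion, $j_*$ is exact on quasi-coherent sheaves. Hence $H^i(\PP^n,j_*\mathscr F)\cong H^i(H,\mathscr F)$ for every quasi-coherent $\mathscr F$ on $H$ (see, e.g., \cite[Lemma III.2.10]{H77}). The right-hand term is therefore $H^i(H,\II_{\Tr_HZ}(d))$, where the twist on $H$ is by $\OO_H(d)=j^*\OO_{\PP^n}(d)$, and this identification is compatible via the projection formula.

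By hypothesis, both outer terms in this segment vanish. The middle term sits in an exact sequence $0\to H^i(\PP^n,\II_Z(d))\to 0$, so it is zero as well, which proves the lemma.

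The only step needing care is the first: checking that the trace term is genuinely the pushforward of the ideal sheaf of $\Tr_HZ\subset H$ and computing its cohomology on $H$. Everything after that is formal.
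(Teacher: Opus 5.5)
Your proposal is correct and follows the same route as the paper: take cohomology of the Castelnuovo exact sequence and use exactness of the segment $H^i(\PP^n,\II_{\Res_HZ}(d-1)) \to H^i(\PP^n,\II_Z(d)) \to H^i(H,\II_{\Tr_HZ}(d))$. Your extra checks fill in details the paper leaves implicit. These are that sheafifying the graded sequence gives the Castelnuovo sequence, and that $H^i(\PP^n,j_*\mathscr F)\cong H^i(H,\mathscr F)$ for the closed immersion $j$.
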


\begin{proof}
  Taking cohomology, the Castelnuovo exact sequence leads to the following exact sequence, and the result follows immediately.
  \begin{equation*}
    H^i(\PP^n,\II_{\Res_HZ}(d - 1)) \to H^i(\PP^n,\II_Z(d)) \to H^i(H,\II_{\Tr_H Z}(d))\qedhere
  \end{equation*}
\end{proof}

In the literature, we often see the statement $H^i(\PP^n,\II_{\Res_HZ}(d-1))=0$ referred to as \textit{degue} and $H^i(H,\II_{\Tr_HZ}(d))=0$ as \textit{dime}.  This gives us a powerful inductive technique.  In practice, we want to show that cohomologies of the twists of the ideal sheaf of a particular scheme vanish, but it is often simpler to verify the degue and dime statements.  These statements depend on schemes in lower degree and dimension, respectively (hence the names).

Now recall our notion of a $(2,3)$-point, i.e., a fat point scheme with ideal $\LL^2 + \pp^3$, where $\LL$ is the ideal of a line $L$ through a point $p$ with ideal $\pp$.  Related fat point schemes with the same support are the 2-jet (see \cite{AH97}) with ideal $\LL + \pp^2$ and the double point with ideal $\pp^2$.  As we will see, both will appear when we start finding residuals and traces of $(2,3)$-point schemes.  As with $(2,3)$-points, it only makes sense to consider schemes of 2-jets when $n\geq 2$.

Now consider a hyperplane $H$ through $p$ and let $Z$ be a 2-jet or $(2,3)$-point supported at $p$.  We say that $Z$ is \textit{partially supported} in $H$ if $L\cap H = p$ and \textit{fully supported} in $H$ if $L\subset H$.

\begin{lemma}\label{residuals and traces}
  Suppose $n\geq 2$ and $H$ is a hyperplane.
  \begin{enumerate}[(a)]
  \item If $Z$ is a $(2,3)$-point partially supported in $H$, then $\Res_HZ$ is a double point.
  \item If $Z$ is a $(2,3)$-point partially supported in $H$, then $\Tr_HZ$ is a double point.
  \item If $Z$ is a $(2,3)$-point fully supported in $H$, then $\Res_HZ$ is a 2-jet.
  \item If $Z$ is a $(2,3)$-point fully supported in $H$, then $\Tr_HZ$ is a $(2,3)$-point if $n\geq 3$ or a triple point if $n = 2$.
  \item If $Z$ is a double point supported in $H$, then $\Res_HZ$ is a reduced point.
  \item If $Z$ is a double point supported in $H$, then $\Tr_HZ$ is a double point.    
  \item If $Z$ is a 2-jet fully supported in $H$, then $\Res_HZ$ is empty.
  \item If $Z$ is a 2-jet fully supported in $H$, then $\Tr_HZ$ is a 2-jet if $n\geq 3$ or a double point if $n=2$.
  \end{enumerate}
\end{lemma}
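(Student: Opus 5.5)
The plan is to verify all eight items with explicit ideal computations in local coordinates. Throughout, I use that $\Res_HZ$ is cut out by $I:(h)$ and $\Tr_HZ$ by $(I+(h))/(h)$, as set up just before the Horace lemma. In each case I change coordinates so that $p=[1:0:\cdots:0]$, with $\pp=(x_1,\ldots,x_n)$ and $\LL=(x_2,\ldots,x_n)$, as in the proof of \Cref{tangent space as inverse system}. The choice of $h$ then depends on how $Z$ sits relative to $H$.
\begin{itemize}
\item If the point $p$ lies in $H$ but the line $L$ does not, we can take $h=x_1$.
\item If $L\subset H$ (the fully supported case), we can take $h=x_n$ with $n\ge 2$.
\end{itemize}
The degree counts from the proof of \Cref{tangential terracini locus - sheaf cohomology} serve as a check on every answer. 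A $(2,3)$-point has degree $2n+1$. A 2-jet has degree $n+1$, with local monomials $1,x_1,\ldots,x_n$ and $x_1^2$ replaced appropriately. A double point has degree $n+1$.

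For (a) and (b), take $I=(x_2,\ldots,x_n)^2+(x_1,\ldots,x_n)^3$ and $h=x_1$.
\begin{itemize}
\item For the trace, setting $x_1=0$ leaves $(x_2,\ldots,x_n)^2$ in $H=\PP^{n-1}$. This is the square of the ideal of $p$ in $H$, so the trace is a double point.
\item For the residual, I will show $I:x_1=(x_1,\ldots,x_n)^2$. One inclusion holds because $x_1\pp^2\subset\pp^3$. For the other, I decompose a form by monomials. Because $I$ is a monomial ideal, the colon is also monomial, so it suffices to check monomials. A monomial $x^a$ with $x_1x^a\in I$ either has at least two factors among $x_2,\ldots,x_n$, or satisfies $\deg_{\pp}(x_1x^a)\ge3$, meaning its $\pp$-degree is at least $2$. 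Either way $x^a\in\pp^2$.
\item Degrees check out: $2n+1=(n+1)+n$.
\end{itemize}

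For (c) and (d), use $h=x_n$.
\begin{itemize}
\item Modulo $x_n$, the ideal $I$ becomes $(x_2,\ldots,x_{n-1})^2+(x_1,\ldots,x_{n-1})^3$. This is the $(2,3)$-point in $H\cong\PP^{n-1}$ on the line $L$ when $n\ge3$. When $n=2$ it becomes $(x_1^3)$, a triple point on $\PP^1$.
\item For the residual, the colon $I:x_n$ again reduces to monomials. A monomial $x^a$ lies in $I:x_n$ when either $x^a$ has at least one factor in $x_2,\ldots,x_n$, or $x^a\in\pp^2$. So the residual ideal is $\LL+\pp^2$, a 2-jet.
\item Degrees check out: $2n+1=(n+1)+n$ for $n\ge3$, and $5=3+2$ for $n=2$.
\end{itemize}

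Items (e) through (h) are the same kind of monomial colon computation. For a double point and any $h\in\pp$, we get $\pp^2:h=\pp$ and trace $\pp_H^2$. For a 2-jet with $h=x_n$, $x_n$ already lies in $\LL+\pp^2$, so the colon is the unit ideal and the residual is empty. Its trace is $(x_2,\ldots,x_{n-1})+(x_1,\ldots,x_{n-1})^2$, which is a 2-jet in $H$ for $n\ge3$ and $(x_1^2)$, a double point, for $n=2$.

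The only real subtlety is justifying the monomial reduction for the colon ideals. This is legitimate because all ideals involved are monomial in the chosen coordinates and $h$ is a variable. I also need to make sure the coordinate change can simultaneously put $p$, $L$ and $H$ into the stated position. That is immediate in each case: choose a basis of $S_1$ adapted to the flag $p\in L$ together with $H$.
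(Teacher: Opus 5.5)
Your proof is correct and is essentially the paper's argument. You use the same adapted coordinates, $\pp=(x_1,\ldots,x_n)$ and $\LL=(x_2,\ldots,x_n)$ with $h=x_1$ in the partial case or $h=x_n$ in the full case, and then compute traces and colons directly. The only difference is that you justify the colon by monomiality, where the paper cites the intersection formula $I:(h)=h^{-1}(I\cap(h))$ from CLO.

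One thing to fix is your degree sanity check, although it is not load-bearing and the ideal computations stand.
\begin{itemize}
\item A 2-jet $(x_1^2,x_2,\ldots,x_n)$ has length $2$, with local basis $1,x_1$, not $n+1$.
\item In (d) for $n\geq 3$, the trace is a $(2,3)$-point in $\PP^{n-1}$, so it has length $2n-1$, not $n$.
\item The correct bookkeeping for (c)/(d) is therefore $2n+1=2+(2n-1)$ when $n\geq 3$, and $5=2+3$ when $n=2$.
\end{itemize}
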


\begin{proof}
  These are straightforward computations using our usual change of coordinates $\LL=(x_2,\ldots,x_n)$ and $\pp=(x_1,\ldots,x_n)$, with either $H=(x_1)$ (partially supported) or $H=(x_n)$ (fully supported), followed by \cite[\S 4.4, Theorem 14]{CLO07} to compute the colon ideal when finding the residual.

  For example, for (g), $\LL + \pp^2 = (x_1^2,x_2,\ldots,x_n)$, so $(\LL + \pp^2)\cap(x_n) = (x_n)$, and thus $(\LL + \pp^2):(x_n) = R$.  Cases (a)--(d) are also found in \cite[Remark 1.7]{BCGI09}.
\end{proof}

\begin{remark}\label{inductive residual}
We see then that if we apply $\Res_H$ enough times on a $(2,3)$-point scheme, the subscheme supported in $H$ will eventually vanish.  We denote by $\Res_H^kZ$ the scheme $\underbrace{\Res_H(\cdots(\Res_H}_{k\text{ times}}Z)\cdots)$.

Note in particular that if $H$ is a hyperplane and $Z$ is a scheme consisting of $r$ $(2,3)$-points, $s'$ of which are partially supported in $H$ and exactly $s''$ of which are fully supported in $H$, then, letting $s=s'+s''$,
\begin{enumerate}[(a)]
\item $\Res_HZ$ consists of $r-s$ $(2,3)$-points supported away from $H$, $s'$ double points supported in $H$, and $s''$ 2-jets supported in $H$,
\item $\Tr_HZ$ consists of $s'$ double points and $s''$ $(2,3)$-points (if $n\geq 3$) or triple points (if $n=2$) as a subscheme of $H$,
\item $\Res_H^2Z$ consists of $r-s$ $(2,3)$-points supported away from $H$ and $s'$ reduced points on $H$,
\item $\Tr_H(\Res_HZ)$ consists of $s'$ double points and $s''$ 2-jets (if $n\geq 3$) or double points (if $n=2$) as a subscheme of $H$,
\item $\Res_H^3Z$ consists of $r - s$ $(2,3)$-points supported away from $H$, and
\item $\Tr_H(\Res_H^2Z)$ consists of $s'$ reduced points as a subscheme of $H$.
\end{enumerate}
\end{remark}

Our goal is to develop an inductive technique similar to the one used in \cite[Lemma 6.1]{GSTT25} for Terracini loci of Veronese varieties, but applied to their tangential varieties.  We first establish the following technical definition.

\begin{definition}\label{horace general}
  A set $\Gamma\subset\tau(\nu_d(\PP^n))$ of smooth points with distinct base points that all lie in a hyperplane $H\subset\PP^n$ is \textit{Horace general} in $H$ if the following are all true:
  \begin{enumerate}[(a)]
  \item $h^1(H, \II_{\Tr_HZ_\Gamma}(d))=0$,
  \item $h^1(H, \II_{\Tr_H(\Res_HZ_\Gamma)}(d - 1)) = 0$,
  \item $h^1(H, \II_{\Tr_H(\Res_H^2Z_\Gamma)}(d - 2)) = 0$.
  \end{enumerate}
\end{definition}

By \Cref{inductive residual}, we see that we can determine Horace generality by looking at the following:
\begin{enumerate}[(a)]
\item a scheme of $s'$ double and $s''$ $(2,3)$-points or triple points, which corresponds via apolarity to the join $\sigma_{s',s''}(\nu_d(\PP^{n-1}))$ of \cite{B22},
\item a scheme of $s'$ double points and $s''$ 2-jets or double points, and
\item a scheme of $s'$ reduced points, about which much is known, e.g., \cite{EGH96}.
\end{enumerate}

\begin{lemma}\label{horace induction}
  Suppose $d\geq 5$, $n\geq 2$, and consider a set $\Gamma\subset\tau(\nu_d(\PP^n))$ consisting of $r$ smooth points containing no intratangent pairs.  Furthermore, for a given hyperplane $H\subset\PP^n$, let $\Gamma'=\{[\ell^{d-1}m]\in\Gamma:[\ell]\in H\}$ and $s = |\Gamma'|$.  If $\Gamma'$ is Horace general in $H$ and $\{[\ell^{d-4}m]:[\ell^{d-1}m]\in\Gamma\setminus\Gamma'\}\not\in\Ter_{r-s}(\tau(\nu_{d-3}(\PP^n)))$, then $\Gamma\not\in\Ter_r(\tau(\nu_d(\PP^n)))$.
\end{lemma}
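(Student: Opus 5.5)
By \Cref{tangential terracini locus - sheaf cohomology}, since $\Gamma$ contains no intratangent pairs, it suffices to show that $h^1(\PP^n,\II_{Z_\Gamma}(d))=0$. The plan is to apply the Horace lemma (\Cref{horace lemma}, with $i=1$) three times in succession with respect to the same hyperplane $H$, peeling off the part of $Z_\Gamma$ supported in $H$.

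The first application uses the Castelnuovo sequence for $Z_\Gamma$ in degree $d$. The dime term is $h^1(H,\II_{\Tr_HZ_\Gamma}(d))$, which vanishes by condition (a) of \Cref{horace general}. This reduces the problem to showing $h^1(\PP^n,\II_{\Res_HZ_\Gamma}(d-1))=0$.

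The second application is to $\Res_HZ_\Gamma$ in degree $d-1$. The dime term vanishes by condition (b), so it remains to show $h^1(\PP^n,\II_{\Res^2_HZ_\Gamma}(d-2))=0$.

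The third application is to $\Res^2_HZ_\Gamma$ in degree $d-2$. The dime term vanishes by condition (c), so we reduce to $h^1(\PP^n,\II_{\Res^3_HZ_\Gamma}(d-3))=0$.

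One subtlety: the trace schemes in \Cref{horace general} are built from $Z_{\Gamma'}$, but since the points of $\Gamma\setminus\Gamma'$ have base points off $H$, their $(2,3)$-points do not meet $H$. Hence $\Tr_H$ of each iterated residual of $Z_\Gamma$ agrees with that of $Z_{\Gamma'}$, using \Cref{residuals and traces} and \Cref{inductive residual}.

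Next, by \Cref{inductive residual}(e), $\Res^3_HZ_\Gamma$ is exactly the union of the $r-s$ $(2,3)$-points of $\Gamma\setminus\Gamma'$, unchanged because they are supported away from $H$. This is precisely $Z_{\Gamma''}$ for $\Gamma''=\{[\ell^{d-4}m]:[\ell^{d-1}m]\in\Gamma\setminus\Gamma'\}\subset\tau(\nu_{d-3}(\PP^n))$. Indeed, the $(2,3)$-point attached to $[\ell^{d-1}m]$ depends only on the point $[\ell]$ and the line $\langle[\ell],[m]\rangle$, not on the degree.

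We then need to check that $\Gamma''$ consists of smooth points with no intratangent pairs, so that \Cref{tangential terracini locus - sheaf cohomology} applies in degree $d-3\geq 2$. This is where the hypothesis $d\geq 5$ enters. Smoothness holds because $[\ell]\neq[m]$. The points of $\Gamma''$ remain distinct, and there are no intratangent pairs, because the base points of $\Gamma$ are distinct. Since $\Gamma''\notin\Ter_{r-s}(\tau(\nu_{d-3}(\PP^n)))$ by hypothesis, \Cref{tangential terracini locus - sheaf cohomology} gives $h^1(\PP^n,\II_{Z_{\Gamma''}}(d-3))=0$. Chaining back through the three Horace steps then yields $h^1(\PP^n,\II_{Z_\Gamma}(d))=0$, so $\Gamma\notin\Ter_r(\tau(\nu_d(\PP^n)))$.

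The main obstacle is bookkeeping rather than depth. We must verify that the residuals and traces computed locally in \Cref{residuals and traces} assemble correctly for a union of schemes with distinct supports. This should follow since the colon and sum operations are local and the ideals are saturated intersections. We must also handle the edge case $r=s$, where $\Res^3_HZ_\Gamma$ is empty and $h^1$ of $\OO_{\PP^n}(d-3)$ vanishes trivially.
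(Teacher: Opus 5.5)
Your proposal is the paper's proof written out in full. The paper only says the lemma follows from \Cref{tangential terracini locus - sheaf cohomology} by applying \Cref{horace lemma} three times along $H$, as organized in \Cref{inductive residual}, with the dimes supplied by Horace generality and the final degue by the hypothesis in degree $d-3$. Your bookkeeping is correct: the traces only see $\Gamma'$, $\Res^3_HZ_\Gamma=Z_{\Gamma''}$, and the case $r=s$ is handled.

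One justification is wrong, however. For $d=5$ you have $\Gamma''\subset\tau(\nu_2(\PP^n))$, where $[\ell m]=[m\ell]$. So distinct base points in $\Gamma$ do not make the points of $\Gamma''$ distinct: a swapped pair $[\ell_1^4\ell_2],[\ell_2^4\ell_1]$ in $\Gamma\setminus\Gamma'$ collapses to the single point $[\ell_1\ell_2]$. Then $\Res^3_HZ_\Gamma$ contains two $(2,3)$-points whose degree-$2$ inverse systems both equal $\ell_1S_1+\ell_2S_1$. Hence $h^1(\PP^n,\II_{\Res^3_HZ_\Gamma}(2))>0$ and your last degue fails. Meanwhile $\Gamma''$ has fewer than $r-s$ elements, so it lies outside $\Ter_{r-s}(\tau(\nu_2(\PP^n)))$ only vacuously.

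This is a defect in the literal statement, not just in your write-up. Take $n=2$, $\Gamma=\{[y_0^4y_1],[y_1^4y_0],[(y_0+y_1)^4m_3],[y_2^4m_4]\}$ with any $m_3,m_4$ giving smooth points, and $H=\PP(\langle y_0+y_1,y_2\rangle)$. The two points over $H$ are Horace general: on $H\cong\PP^1$ the three traces have degree at most $5$, $4$, $2$, which impose independent conditions on forms of degree $5$, $4$, $3$. Yet $\Gamma\in\Ter_4(\tau(\nu_5(\PP^2)))$, because its first three points have collinear base points (\Cref{3 collinear base points 5-2}).

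So you should make explicit that the hypothesis requires the $r-s$ points $[\ell^{d-4}m]$ to be distinct. This is automatic when $d\geq 6$, since then $\ell$ is the repeated factor of $\ell^{d-4}m$. It is also harmless for the paper, since only $d\geq 8$ is used in \Cref{no intratangent pairs implies not in terracini locus}.
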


\begin{proof}
  This follows from \Cref{tangential terracini locus - sheaf cohomology} by applying \Cref{horace lemma} three times, as suggested by \Cref{inductive residual}.  See \Cref{visualization of horace induction} for a visualization.
\end{proof}

  \begin{figure}
    \begin{tikzpicture}
      \node (A) at (0,0) [draw] {$H^1(\PP^n,\II_{Z_\Gamma}(d))$};
      \node (B) at (-3,1) [draw] {$H^1(\PP^n,\II_{\Res_HZ_\Gamma}(d-1))$};
      \node (C) at (3,1) [draw] {$H^1(H,\II_{\Tr_HZ_\Gamma}(d))$};
      \node (D) at (-3,2) [draw] {$H^1(\PP^n,\II_{\Res_H^2Z_\Gamma}(d-2))$};
      \node (E) at (3,2) [draw] {$H^1(H,\II_{\Tr_H(\Res_HZ_\Gamma)}(d-1))$};
      \node (F) at (-3,3) [draw] {$H^1(\PP^n,\II_{\Res_H^3Z_\Gamma}(d-3))$};
      \node (G) at (3,3) [draw] {$H^1(H,\II_{\Tr_H(\Res_H^2Z_\Gamma)}(d-2))$};
      \draw[->] (B) -- (A);
      \draw[->] (C) -- (A);
      \draw[->] (D) -- (B);
      \draw[->] (E) -- (B);
      \draw[->] (F) -- (D);
      \draw[->] (G) -- (D);
    \end{tikzpicture}
    \caption{Applying the Horace lemma in the proof of \Cref{horace induction}}
    \label{visualization of horace induction}
  \end{figure}

  For a given set $\Gamma=\{[\ell_1^{d-1}m_1],\ldots,[\ell_r^{d-1}m_r]\}$ of smooth points of $\tau(\nu_d(\PP^n))$, we denote by $\widetilde\Gamma$ the set $\{[\ell_1],[m_1],\ldots,[\ell_r],[m_r]\}\subset\PP^n$.  We may assume that $|\widetilde\Gamma|=2r$ by \Cref{distinct points}.

  We now generalize \Cref{tangential terracini locus - binary forms} to an arbitrary number of variables.

\begin{theorem}\label{no intratangent pairs implies not in terracini locus}
  Suppose $d\geq 5$, $n$ is a positive integer, and $\Gamma$ is a set of $r\leq\frac{d+1}{3}$ smooth points in $\tau(\nu_d(\PP^n))$. Then $\Gamma\in\Ter_r(\tau(\nu_d(\PP^n)))$ if and only if it contains an intratangent pair.
\end{theorem}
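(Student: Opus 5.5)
The ``if'' direction is exactly \Cref{intratangent pair}, so everything lies in the converse. We must show that if $\Gamma$ has no intratangent pair and $r\leq\frac{d+1}{3}$, then $\Gamma\notin\Ter_r(\tau(\nu_d(\PP^n)))$. The case $n=1$ is \Cref{tangential terracini locus - binary forms}. For $n\geq 2$ we use \Cref{tangential terracini locus - sheaf cohomology} and show $h^1(\PP^n,\II_{Z_\Gamma}(d))=0$. The argument is an induction on $r$, stepping $d$ down by $3$ each time via \Cref{horace induction}. By \Cref{distinct points} we may assume $|\widetilde\Gamma|=2r$.

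\textbf{Base and induction step.} For $r\leq 1$ there is nothing to prove. Any $r=1$ configuration is not in the Terracini locus, since a single tangent space has the expected dimension. Given $r\geq 2$, choose a hyperplane $H$ through the base point $[\ell_1]$ of exactly one point of $\Gamma$, so $\Gamma'=\{[\ell_1^{d-1}m_1]\}$ and $s=1$. We choose $H$ to avoid the other $r-1$ base points, which is possible since they are finitely many points distinct from $[\ell_1]$. The remaining set $\{[\ell_i^{d-4}m_i]:i\geq 2\}$ has $r-1$ points in $\tau(\nu_{d-3}(\PP^n))$ with distinct base points, so it still has no intratangent pair. Moreover $r-1\leq\frac{(d-3)+1}{3}$ is equivalent to $r\leq\frac{d+1}{3}$, so the hypothesis is preserved. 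If $d-3\geq 5$ we apply the induction hypothesis to conclude that this residual set is not in $\Ter_{r-1}(\tau(\nu_{d-3}(\PP^n)))$.

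\textbf{Small degrees.} When $d-3<5$, i.e.\ $d\in\{5,6,7\}$, we have $r\leq 2$, so $r-1\leq 1$ and the residual set is a single point, which trivially lies outside the Terracini locus. For the same reason we may assume $d-3\geq 2$ throughout, so the tangential variety of degree $d-3$ makes sense.

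\textbf{Horace generality of $\Gamma'$.} With $s=1$, we may choose $H$ so that $Z$ at $[\ell_1]$ is partially supported, i.e.\ $[m_1]\notin H$. Then by \Cref{inductive residual} the three trace schemes are as follows:
\begin{enumerate}[(a)]
\item a double point in $H\cong\PP^{n-1}$, for degree $d$;
\item a double point in $H$, for degree $d-1$;
\item a reduced point in $H$, for degree $d-2$.
\end{enumerate}
A single double point imposes independent conditions on forms of degree at least $1$, and a reduced point does so in every degree $\geq 0$. Hence all three $h^1$ vanish, and \Cref{horace induction} gives $h^1(\PP^n,\II_{Z_\Gamma}(d))=0$.

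\textbf{Main obstacle.} The delicate point is checking that \Cref{horace induction} and \Cref{inductive residual} really apply with this choice of $H$. The residual $\Res_H^3Z_\Gamma$ must be exactly the $(2,3)$-points away from $H$, with the correct lines $\LL_i$. This holds because $H$ misses the other base points, so their local ideals are unchanged by the colon, and $\Res^3_H$ of the partially supported point is empty. We also need $H$ to pass through $[\ell_1]$ while avoiding $[m_1]$ and the other base points; a general hyperplane through $[\ell_1]$ does this. Finally, when $n=2$ the traces live on a line, but double and reduced points there still impose independent conditions in the relevant degrees.
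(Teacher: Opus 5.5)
Your proof is correct and is essentially the paper's argument: a hyperplane through one base point that avoids its direction point and the other base points makes that single $(2,3)$-point trivially Horace general, and \Cref{horace induction} then lowers the degree by $3$ and the number of points by $1$. The only difference is bookkeeping. The paper first reduces to $r=\lfloor\frac{d+1}{3}\rfloor$ via \Cref{tangential terracini locus - induction via vector spaces} and settles the base cases $d\in\{5,6,7\}$ with \Cref{2nd terracini locus}, whereas you induct on $r$ and let those cases bottom out at a single residual point, which works equally well.
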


\begin{proof}
  The $n=1$ case is \Cref{tangential terracini locus - binary forms}, so we may assume $n\geq 2$.  Furthermore, the reverse direction is \Cref{intratangent pair}, so we need to only prove the (contrapositive of the) forward direction.

  By \Cref{tangential terracini locus - induction via vector spaces}, it is sufficient to prove the result for $r=\left\lfloor\frac{d+1}{3}\right\rfloor$.  We proceed by 3-step induction on $d$.  For the base cases, we use $d\in\{5,6,7\}$ (and so $r=2$), for which the result is true by \Cref{2nd terracini locus}.

  Suppose $d\geq 8$.  Pick a hyperplane $H\subset\PP^n$ containing $[\ell_1]$ but no other point of $\widetilde\Gamma$.  Since it contains a single point, $\{[\ell_1^{d-1}m_1]\}$ is certainly Horace general.  By induction, $\{[\ell_2^{d-4}m_2],\ldots,[\ell_r^{d-4}m_r]\}\not\in\Ter_{\left\lfloor\frac{d-3+1}{3}\right\rfloor}(\tau(\nu_{d-3}(\PP^n)))$, and so the result follows by \Cref{horace induction}.
\end{proof}

We summarize the cases for which we can completely describe the Terracini locus of a tangential variety of a Veronesean in \Cref{complete descriptions}.

\begin{table}
  \begin{tabular}{|c|c|c|c|c|}
    \hline
    $d$ & $n$ & $r$ & $\Ter_r(\tau(\nu_d(\PP^n)))$ & Reference \\ \hline\hline
    2 & $6,\ldots$ & $2,\ldots,\left\lfloor\frac{1}{2n+1}\binom{n+2}{2}\right\rfloor$ & \multirow{3}{*}{$\tau(\nu_d(\PP^n))_{sm}^{(r)}$} & \multirow{3}{*}{\Cref{tangential secant variety defectivity}} \\
    3 & 2 & 2 & & \\
    $2,\ldots$ & $1,\ldots$ & $\left\lfloor\frac{1}{2n+1}\binom{n+d}{d}\right\rfloor + 1,\ldots$ & & \\ \hline
    3 & $3,\ldots$ & 2 & $\{\Gamma:\langle\widetilde\Gamma\rangle\cong\PP^k,k\leq 2\}$ & \Cref{2nd terracini locus - cubics} \\ \hline
    4 & $2,\ldots$ & 2 &  $\{\Gamma:\langle\widetilde\Gamma\rangle\cong\PP^1\text{ or intratangent pair}\subset\Gamma\}$ & \Cref{2nd terracini locus - quartics} \\ \hline
    $5,\ldots$ & $1,\ldots$ & $2,\ldots,\left\lfloor\frac{d+1}{3}\right\rfloor$ & $\{\Gamma:\text{intratangent pair}\subset\Gamma\}$ & \Cref{no intratangent pairs implies not in terracini locus} \\ \hline
  \end{tabular}
  \caption{Complete descriptions of Terracini loci of $\tau(\nu_d(\PP^n))$}
  \label{complete descriptions}
\end{table}

\section{Moment varieties and local mixtures of location families}\label{sec: moment varieties}
We now turn to an application of our work in algebraic statistics.  Since statisticians work over $\RR$ and we have been working over an algebraically closed field, we will assume for the remainder of this paper that $\kk=\overline\RR=\CC$.

One of the most classical point estimation techniques in statistics is the \textit{method of moments}.  Consider a sample space $\Omega$ with $\sigma$-algebra $\mathcal F$, a parameter space $\Theta\subset\RR^k$ with $\relint\Theta\neq\emptyset$\footnote{Here, $\relint\Theta$ is the \textit{relative interior} of $\Theta$, which is its interior within the affine hull $\aff\Theta$ of $\Theta$ in $\RR^k$.  Since it is nonempty, the Zariski closure $\overline\Theta$ is an affine subspace of $\CC^k$.}, and a family $\mathcal P=\{P_{\vb\theta}:\mathcal F\to[0,1]\}_{\vb\theta\in\Theta}$ of probability distributions.  For a random vector $\vb X=(X_1,\ldots,X_n):\Omega\to\RR^n$  and any vector $\vb \alpha=(\alpha_1,\ldots,\alpha_n)$ of nonnegative integers, we define the $\vb\alpha$th \textit{moment} of $\vb X$ with respect to $P_{\vb\theta}$ as
\begin{equation*}
  E_{\vb\theta}(\vb X^{\vb\alpha}) = \int_{\Omega} \vb X(\omega)^{\vb\alpha}dP_{\vb\theta}(\omega),
\end{equation*}
assuming this integral exists.  Note that $\vb X^{\vb\alpha}$ denotes $X_1^{\alpha_1}\cdots X_n^{\alpha_n}$.  Note also that $E_{\vb\theta}(\vb X^{\vb 0})=1$ for all $\vb\theta$.

\begin{definition}\label{def: moment variety}
  Consider a positive integer $d$, and let $\mathcal P$ be a family of probability distributions with parameter space $\Theta$.  If the map $\vb\theta\mapsto (E_{\vb\theta}(\vb X^{\vb\alpha}))_{|\vb\alpha|\leq d}$ extends to a rational map $\vb m_{\mathcal P,d}:\overline\Theta\dashrightarrow\CC^{\binom{n+d}{d}}$, then
  \begin{equation*}
    \mathcal M_{\mathcal P,d} = \overline{\{[\vb m_{\mathcal P,d}(\vb\theta)]:\vb\theta\in\Theta\}}\subset\PP^{\binom{n+d}{d}-1}
  \end{equation*}
  is the $d$th \textit{moment variety} of $\mathcal P$.
\end{definition}

We will use the shortened notation $\vb m_d$ and $\mathcal M_d$ when $\mathcal P$ is clear from context.

Given a random sample $\vb x_1,\ldots,\vb x_t$, we compute the sample moments $\hat{\vb m} = \left(\frac{1}{t}\sum_{i=1}^t\vb x_i^{\vb\alpha}\right)_{|\vb\alpha|\leq d}$, and if there exists $\hat{\vb\theta}\in\vb m_d^{-1}(\hat{\vb m})\cap\Theta$, then it serves as a point estimate of $\vb\theta$ by the law of large numbers.

\begin{definition}\label{def: mixture model}
Let $\mathcal P$ be a family of probability distributions, and suppose $\Delta_{r-1}$ is the probability simplex $\{(\lambda_1,\ldots,\lambda_r)\in[0,1]^r:\lambda_1+\cdots+\lambda_r = 1\}$.  An $r$-\textit{mixture} of $\mathcal P$ is a probability distribution of the form $\lambda_1P_{\vb\theta_1} + \cdots + \lambda_rP_{\vb\theta_r}$ for some $P_{\vb\theta_1},\ldots,P_{\vb\theta_r}\in\mathcal P$ and $\vb\lambda\in\Delta_{r-1}$.  The family of all $r$-mixtures of $\mathcal P$ is denoted $\mathcal P_r$.
\end{definition}

By linearity, a mixture has $\vb\alpha$th moment
\begin{equation*}
  E_{\vb\lambda,\vb\theta_1,\ldots,\vb\theta_r}(\vb X^{\vb\alpha}) = \lambda_1E_{\vb\theta_1}(\vb X^{\vb\alpha}) + \cdots + \lambda_rE_{\vb\theta_r}(\vb X^{\vb\alpha}),
\end{equation*}
and so $(E_{\vb\lambda,\vb\theta_1,\ldots,\vb\theta_r}(\vb X^{\vb\alpha}))_{|\vb\alpha|\leq d}$ lies in the span of $r$ points on $\mathcal M_{\mathcal P,d}$ and thus $\mathcal M_{\mathcal P_r,d}\subset\sigma_r(\mathcal M_{\mathcal P,d})$.  In fact, we have equality, cf. \cite[Proposition 4.1.1]{DSS09}.

\begin{example}\label{pearson's crab problem}
  In the landmark paper \cite{P94}, Pearson considered the mixture of two univariate Gaussian distributions and used the method of moments with $d=5$ to estimate the parameters of the distributions of the ratios of forehead breadth to body length of crabs gathered by Weldon in Naples.  In fact, if $\mathcal P$ is the family of univariate Gaussian distributions, then $\sigma_2(\mathcal M_{\mathcal P,5})$ fills its ambient space $\PP^5$, and a generic point lies on 9 secant lines to the moment variety.  Each secant line corresponds to a decomposition of the point into potential method of moments estimators, provided they satisfy the necessary conditions ($\hat{\vb\lambda}\in\Delta_1$ and $\hat{\vb\theta}_1,\hat{\vb\theta}_2\in\Theta$).  See \cite{AFS16} for more details.
\end{example}

We now recall an important statistical construction from \cite{M02}, generalized to a multiparameter family.  Here, $\partial_{\vb\theta}^{\vb\beta}$ denotes the partial derivative operator $\partial^{\vb\beta}$ with respect to affine coordinates $\vb\theta=(\theta_1,\ldots,\theta_{k'})$ on $\aff\Theta$.

\begin{definition}\label{def: local mixture}
  Let $s$ be a nonnegative integer and consider \textit{local mixing parameters} $\gamma_{\vb\beta}\in\RR$ for each vector $\vb\beta\in(\ZZ_{\geq 0})^{k'}$ with $|\vb\beta|\leq s$, with $\gamma_{\vb 0}=1$ and at least one $\vb\beta$ with $|\vb\beta|=s$ and $\gamma_{\vb\beta}$ nonzero.   Let $\mathcal P$ be a family of probability distributions such that each $P_{\vb\theta}\in\mathcal P$ has density function $\vb x\mapsto f(\vb x;\vb\theta)$ for which the likelihood $\vb\theta\mapsto f(\vb x;\vb\theta)$ is $C^s$ on $\relint\Theta$ for each $\vb x$.  If, for some $\vb\theta_0\in\relint\Theta$,
\begin{equation*}
  \vb x \mapsto \sum_{|\vb\beta|\leq s}\gamma_{\vb\beta}\partial_{\vb\theta}^{\vb\beta}f(\vb x;\vb\theta)\Big|_{\vb\theta=\vb\theta_0}
\end{equation*}
is a probability density function, then the corresponding probability distribution is a \textit{local mixture} of \textit{order} $s$ (or $s$-\textit{local mixture}) of $\mathcal P$ at $\vb\theta_0$.
\end{definition}

We denote the family of all $s$-local mixtures of $\mathcal P$ by $\mathcal P^{(s)}$ and its parameter space by $\Theta^{(s)}$, which is the set of all $(\vb\gamma,\vb\theta_0)\in\RR^{\binom{k'+s}{s}}\times\Theta$ that define an $s$-local mixture of $\mathcal P$ at $\vb\theta_0$.

\begin{example}
  Consider the family of univariate Gaussian distributions with unknown mean and fixed variance 1.  Then an $s$-local mixture at $\mu$ has density function
  \begin{equation*}
    x\mapsto \frac{1}{\sqrt{2\pi}}\sum_{\beta=0}^s\gamma_\beta\He_\beta(x - \mu)e^{-(x-\mu)^2/2},
  \end{equation*}
  where $\He_\beta$ is the $\beta$th probabilist's Hermite polynomial.  So if $s$ is odd, and thus $\He_s$ of odd degree, then this function cannot be nonnegative on all of $\RR$.  Consequently, this family only has even local mixtures.  For example, \cite[Example 4]{M02} and \cite[Example 6.6]{GKW20} use order 2 and \cite[Example 2]{AIM07} and \cite[Example 3]{MM17} use order 4.
\end{example}

\begin{definition}\label{def: location family}
  Consider a family $\mathcal P=\{P_{\vb\theta}:\vb\theta\in\RR^n\}$ of probability distributions such that $P_{\vb 0}$ has density $f$.  If the density of $P_{\vb\theta}$ is $\vb x\mapsto f(\vb x - \vb\theta)$ for all $\vb\theta$, then $\mathcal P$ is a \textit{location family}.
\end{definition}

We close this section with a key observation, generalizing ideas from \cite{AAR21,GKW20}.

\begin{theorem}\label{moment varieties of local mixtures are osculating varieties of veroneseans}
  Suppose $d$ is a positive integer, $s$ is a nonnegative integer, and $\mathcal P$ is a location family.  If $f$ is the density of $P_{\vb 0}$ and $\vb x^{\vb\alpha}\partial^{\vb\beta}f$ is integrable for all $|\vb\alpha|\leq d$ and $|\vb\beta|\leq s$, and if $\aff\Theta^{(s)}=\{\gamma_{\vb 0}=1\}\times\RR^n$, then $\mathcal M_{\mathcal P^{(s)},d}$ is projectively equivalent to $\tau_s(\nu_d(\PP^n))$.
\end{theorem}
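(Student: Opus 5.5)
The plan is to compute the moments of an $s$-local mixture directly. Then we show that, after a fixed invertible linear change of coordinates on $\CC^{\binom{n+d}{d}}$, the moment map becomes the standard parametrization of the osculating spaces of $\nu_d(\PP^n)$ from \Cref{osculating variety}.

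\emph{Step 1: moments of the location family.} For $P_{\vb\theta}$ with density $f(\vb x-\vb\theta)$, substitute $\vb x=\vb u+\vb\theta$. This gives
\begin{equation*}
E_{\vb\theta}(\vb X^{\vb\alpha})=\sum_{\vb\alpha'\leq\vb\alpha}\binom{\vb\alpha}{\vb\alpha'}\vb\theta^{\vb\alpha-\vb\alpha'}\mu_{\vb\alpha'},
\end{equation*}
where $\mu_{\vb\alpha'}=E_{\vb 0}(\vb X^{\vb\alpha'})$ is finite by the integrability hypothesis. Homogenize with an extra variable $\theta_0$ and put $\ell_{\vb\theta}=\theta_0y_0+\theta_1y_1+\cdots+\theta_ny_n$. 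The generating form $\sum_{|\vb\alpha|\le d}\binom{d}{\vb\alpha}E_{\vb\theta}(\vb X^{\vb\alpha})\,\vb y^{\vb\alpha}y_0^{d-|\vb\alpha|}$ is then obtained from $\ell_{\vb\theta}^d$ by the linear operator $\Phi$ that replaces each moment vector of the Dirac mass at $\vb\theta$ by its convolution with $P_{\vb 0}$. Concretely, $\Phi$ is a triangular, unipotent linear map in the monomial basis (ordered by $|\vb\alpha|$), because the leading term is $\vb\theta^{\vb\alpha}\mu_{\vb 0}=\vb\theta^{\vb\alpha}$. Hence $\Phi$ is invertible, independent of $\vb\theta$, and $\vb m_{\mathcal P,d}(\vb\theta)=\Phi(\vb v_d(1,\vb\theta))$, where $\vb v_d$ is the Veronese map. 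This recovers the known fact that $\mathcal M_{\mathcal P,d}\cong\nu_d(\PP^n)$.

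\emph{Step 2: moments of local mixtures.} Since $\partial_{\vb\theta}^{\vb\beta}f(\vb x;\vb\theta)$ is integrable against $\vb x^{\vb\alpha}$, we may differentiate under the integral (justified by the integrability hypothesis, via $\partial_{\vb\theta}f(\vb x-\vb\theta)=-\partial_{\vb x}f(\vb x-\vb\theta)$ and dominated convergence or integration by parts). This gives
\begin{equation*}
E_{\vb\gamma,\vb\theta_0}(\vb X^{\vb\alpha})=\sum_{|\vb\beta|\le s}\gamma_{\vb\beta}\,\partial^{\vb\beta}_{\vb\theta}E_{\vb\theta}(\vb X^{\vb\alpha})\big|_{\vb\theta_0}.
\end{equation*}
By linearity of $\Phi$, $\vb m_{\mathcal P^{(s)},d}(\vb\gamma,\vb\theta_0)=\Phi\big(\sum_{\vb\beta}\gamma_{\vb\beta}\partial^{\vb\beta}\vb v_d(1,\vb\theta)|_{\vb\theta_0}\big)$. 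The affine chart $\vb\theta\mapsto \vb v_d(1,\vb\theta)$ is a local parametrization $\varphi$ of the cone over $\nu_d(\PP^n)$ as in \Cref{osculating variety}. Using only affine derivatives, rather than also the derivative in $\theta_0$, suffices, because Euler's relation puts $\varphi$ and its $\theta_0$-derivatives in the same span. The point is therefore the image under $\Phi$ of a point of $T^{(s)}_{[\ell_{\vb\theta_0}^d]}\nu_d(\PP^n)$.

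\emph{Step 3: closure and dimension.} This gives $\mathcal M_{\mathcal P^{(s)},d}\subseteq\Phi(\tau_s(\nu_d(\PP^n)))$. For the reverse inclusion we need the image to be Zariski dense. The map $(\vb\gamma,\vb\theta_0)\mapsto[\sum\gamma_{\vb\beta}\partial^{\vb\beta}\varphi(\vb\theta_0)]$ is a polynomial map from $\CC^{\binom{n+s}{s}-1}\times\CC^n$ (coordinates $\gamma_{\vb\beta}$ with $\vb\beta\ne\vb 0$ free, $\gamma_{\vb 0}=1$) that dominates $\tau_s(\nu_d(\PP^n))$. Here $\gamma_{\vb 0}=1$ is harmless: it removes only the scaling and the hyperplane $\gamma_{\vb 0}=0$, and each osculating space minus a hyperplane is still dense. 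The hypothesis $\aff\Theta^{(s)}=\{\gamma_{\vb0}=1\}\times\RR^n$ says $\Theta^{(s)}$ has nonempty interior in that affine space. A polynomial map's image of a set with nonempty interior in a real affine space is Zariski dense in the image of the complexification, because a polynomial vanishing on an open real set vanishes identically. Taking closures yields the equality $\mathcal M_{\mathcal P^{(s)},d}=\Phi(\tau_s(\nu_d(\PP^n)))$.

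The main obstacle is Step 1: verifying cleanly that moment vectors of translates are obtained from Veronese vectors by a single $\vb\theta$-independent invertible linear map. I would handle this by writing the generating function $E_{\vb\theta}[(y_0+\vb X\cdot\vb y)^d]=E_{\vb 0}[(y_0+\vb\theta\cdot\vb y+\vb U\cdot\vb y)^d]$. Expanding binomially exhibits $\Phi$ as the operator $g\mapsto\sum_{j}\binom{d}{j}^{-1}\cdots$, or more simply as the unipotent operator $\exp$-type convolution $g\mapsto E_{\vb0}[g(y_0+\vb U\cdot\vb y,\ \vb y)]$ after the substitution. This operator is visibly linear in $g$, preserves degree, and is unitriangular with respect to the power of $y_0$.
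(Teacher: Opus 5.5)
Your proposal is correct and follows the paper's proof. Your operator $\Phi$ plays the role of the paper's unitriangular matrix $U$, and both arguments differentiate under the integral to exhibit the local-mixture moment vector as this fixed linear map applied to a point of an osculating space of $\nu_d(\PP^n)$. Your Step 3 (Euler's relation, the harmless normalization $\gamma_{\vb 0}=1$, and Zariski density of a real set with nonempty interior) only spells out what the paper compresses into ``by our hypothesis on $\Theta^{(s)}$.'' One small correction there: nonempty interior does not follow from the affine-hull hypothesis alone. It needs the paper's standing assumption $\relint\Theta^{(s)}\neq\emptyset$ in addition.
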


\begin{proof}
  We first prove the $s=0$ case.  Then for each $\vb\alpha$ with $|\vb\alpha|\leq d$,
  \begin{align*}
    E_{\vb\theta}(\vb X^{\vb\alpha}) &= \int_{\RR^n}\vb x^{\vb\alpha}f(\vb x - \vb\theta)\,d\vb x = \int_{\RR^n}(\vb x + \vb\theta)^{\vb\alpha}f(\vb x)\,d\vb x = E_{\vb 0}((\vb X + \vb\theta)^{\vb\alpha}) \\
                                     &= \sum_{\vb\eta\leq\vb\alpha}\binom{\vb\alpha}{\vb\eta}\vb\theta^{\vb\eta}E_{\vb 0}(\vb X^{\vb\alpha - \vb\eta}),
  \end{align*}
  where $\vb\eta\leq\vb\alpha$ when $\eta_i\leq\alpha_i$ for all $i$ and $\binom{\vb\alpha}{\vb\eta} = \binom{\alpha_1}{\eta_1}\cdots\binom{\alpha_n}{\eta_n}$.  So we may choose an appropriate ordering $(\vb\alpha_1,\ldots,\vb\alpha_{\binom{n+d}{d}})$ so that
  \begin{equation*}
    (E_{\vb\theta}(\vb X^{\vb\alpha_i}))_i = U(\vb\theta^{\vb\alpha_i})_i,
  \end{equation*}
  where
  \begin{equation*}
    U_{ij} = \begin{cases}
      \binom{\vb\alpha_i}{\vb\alpha_j}E_{\vb 0}(\vb X^{\vb\alpha_i - \vb\alpha_j}) & \text{if }\vb\alpha_j\leq\vb\alpha_i \\
      0 &\text{otherwise}
    \end{cases}
  \end{equation*}
  is a unitriangular, and thus invertible, matrix.  But the Zariski closure $X$ of $\{(\vb\theta^{\vb\alpha_i})_i:\vb\theta\in\RR^n\}$ is projectively equivalent to $\nu_d(\PP^n)$ via the map $(\vb\theta^{\vb\alpha_i})_i\mapsto (y_0+\theta_1y_1+\cdots+\theta_ny_n)^d$.

  Now we consider positive $s$.  We write $E_{\vb\gamma,\vb\theta_0}^{(s)}(\vb X^{\vb\alpha})$ for the $\vb\alpha$th moment of an $s$-local mixture with local mixing parameters $\vb\gamma$ at $\vb\theta_0$.  Then, by our integrability hypothesis,
  \begin{equation*}
    E_{\vb\gamma,\vb\theta_0}^{(s)}(\vb X^{\vb\alpha}) = \int_{\RR^n}\vb x^{\vb\alpha}\sum_{|\vb\beta|\leq s}\gamma_{\vb\beta}\partial_{\vb\theta}^{\vb\beta} f(\vb x-\vb\theta)\Big|_{\vb\theta=\vb\theta_0}\,d\vb x 
    =  \sum_{|\vb\beta|\leq s}\gamma_{\vb\beta}\partial_{\vb\theta}^{\vb\beta} E_{\vb\theta}(\vb X^{\vb\alpha})\Big|_{\vb\theta=\vb\theta_0}
  \end{equation*}
  and so $(E_{\vb\gamma,\vb\theta_0}^{(s)}(\vb X^{\vb\alpha_i}))_i = U\vb v$, where
  \begin{equation*}
    \vb v =\left(\sum_{|\vb\beta|\leq s}\gamma_{\vb\beta}\partial_{\vb\theta}^{\vb\beta} \vb\theta^{\vb\alpha_i}\Big|_{\vb\theta=\vb\theta_0}\right)_i
  \end{equation*}
  is a point of (an affine chart of) $T_p^{(s)}X$, where $p$ has coordinates $(\vb\theta_0^{\vb\alpha_i})_i$ in this chart (see \Cref{osculating variety}).  By our hypothesis on $\Theta^{(s)}$, the closure of all such moment vectors is projectively equivalent to $\tau_s(\nu_d(\PP^n))$.
\end{proof}

\section{Terracini loci of moment varieties}\label{sec: terracini loci of moment varieties}

Much has been said about what happens when $\sigma_r(\mathcal M_{\mathcal P, d})$ is subabundant and nondefective.  A family $\mathcal P$ of probability distributions is \textit{algebraically identifiable} (from its moments of degree at most $d$) if the map $\vb m_{\mathcal P,d}$ is generically finite-to-one.  Furthermore, if $\mathcal P$ is algebraically identifiable and $\Ter_r(\mathcal M_{\mathcal P,d})\subsetneq(\mathcal M_{\mathcal P,d})^{(r)}_{sm}$, then $\mathcal P_r$ is also algebraically identifiable.  This is the case for Pearson's crab problem from \Cref{pearson's crab problem} since $\sigma_r(\mathcal M_{\mathcal P,d})$ (when $\mathcal P$ is the family of univariate Gaussians) is nondefective for all $(d,r)$ by \cite[Theorem 1]{ARS18}, and for Pearson, where $(d,r)=(5,2)$, subabundant (actually, equiabundant).  Algebraic identifiability has also been studied for mixtures of inverse Gaussian, gamma \cite{HRSY26,HSY24}, and homoscedastic Gaussian \cite{AAR21} distributions.

In light of \Cref{moment varieties of local mixtures are osculating varieties of veroneseans}, we may reinterpret the Alexander-Hirschowitz and Abo-Vannieuwenhoven theorems in the language of algebraic identifiability.

\begin{theorem}[\cite{AH95,AV18}]
  Let $\mathcal P$ be a location family of $n$-variate probability distributions satisfying the hypotheses of \Cref{moment varieties of local mixtures are osculating varieties of veroneseans}.  Then $r$-mixtures of $\mathcal P$ are algebraically identifiable from their moments of degree at most $d$ unless
  \begin{enumerate}[(a)]
  \item $d=2$, $n\geq 2$, and $r\in\left\{2,\ldots,\left\lfloor\frac{1}{n+1}\binom{n+2}{2}\right\rfloor\right\}$,
  \item $(d,n,r)\in\{(3, 4, 7), (4, 2, 5), (4, 4, 14)\}$, or
  \item $r>\frac{1}{n+1}\binom{n+d}{d}$.
  \end{enumerate}
  Furthermore, $r$-mixtures of 1-local mixtures of $\mathcal P$ are algebraically identifiable from their moments of degree at most $d$ unless
  \begin{enumerate}[(a)]
  \item $d=2$, $n\geq 6$, and $r\in\left\{2,\ldots,\left\lfloor\frac{1}{2n+1}\binom{n+2}{2}\right\rfloor\right\}$,
  \item $(d,n,r)=(3,2,2)$, or
  \item $r>\frac{1}{2n+1}\binom{n+d}{d}$.
  \end{enumerate}
\end{theorem}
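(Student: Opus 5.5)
Both halves of the statement are translations, via \Cref{moment varieties of local mixtures are osculating varieties of veroneseans}, of known classifications of defective secant varieties into the language of algebraic identifiability. Everything takes place up to projective equivalence, which preserves the dimensions of secant varieties.

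\emph{Step 1: identify the varieties.} The theorem with $s=0$ gives $\mathcal M_{\mathcal P,d}\cong\nu_d(\PP^n)$. With $s=1$ it gives $\mathcal M_{\mathcal P^{(1)},d}\cong\tau_1(\nu_d(\PP^n))=\tau(\nu_d(\PP^n))$. The mixture identity $\mathcal M_{\mathcal P_r,d}=\sigma_r(\mathcal M_{\mathcal P,d})$ (cf. \cite[Proposition 4.1.1]{DSS09}) then gives $\mathcal M_{\mathcal P_r,d}\cong\sigma_r(\nu_d(\PP^n))$ and $\mathcal M_{(\mathcal P^{(1)})_r,d}\cong\sigma_r(\tau(\nu_d(\PP^n)))$.

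\emph{Step 2: translate identifiability into a dimension count.} Parametrize $r$-mixtures by $(\vb\lambda,\vb\theta_1,\ldots,\vb\theta_r)$, giving $r(\dim X+1)-1$ parameters after projectivizing, where $X$ is the base moment variety. The moment map is generically finite-to-one exactly when the dimension of its image equals the number of parameters, that is, when $\dim\sigma_r(X)=r(\dim X+1)-1$. This fails precisely when either
\begin{itemize}
\item[(i)] $r(\dim X+1)-1$ exceeds the ambient dimension $\binom{n+d}{d}-1$ (the superabundant case), or
\item[(ii)] $\sigma_r(X)$ is defective in the subabundant range.
\end{itemize}
Equivalently, by Terracini's lemma, identifiability holds if and only if $\Ter_r(X)\ne X^{(r)}_{sm}$. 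One must also check that the base family is algebraically identifiable: $\dim\nu_d(\PP^n)=n$ and $\dim\tau(\nu_d(\PP^n))=2n$ for $d\ge 3$, and the same holds for $d=2$ when $n$ is large. Nondegenerate parameters give these counts, and the hypothesis $\aff\Theta^{(1)}=\{\gamma_{\vb 0}=1\}\times\RR^n$ ensures that the parameter count $2n$ matches.

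\emph{Step 3: read off the exceptions.} For $X=\nu_d(\PP^n)$, with $\dim X=n$, the Alexander--Hirschowitz theorem \cite{AH95} lists the defective cases. These are $d=2$ with $2\le r\le n$, and $(d,n,r)\in\{(3,4,7),(4,2,5),(4,3,9),(4,4,14)\}$. Merging these with the superabundant range $r>\frac{1}{n+1}\binom{n+d}{d}$ and discarding any exceptions already covered by that range gives the first list. For instance, $(4,3,9)$ satisfies $9>35/4$ and so falls under (c). For $d=2$, the bound $\lfloor\binom{n+2}{2}/(n+1)\rfloor=\lfloor (n+2)/2\rfloor$, and values of $r$ beyond it are superabundant. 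For $X=\tau(\nu_d(\PP^n))$, the second list is exactly \Cref{tangential secant variety defectivity}, which already combines the defective and superabundant cases.

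\emph{Main obstacle.} I expect the main difficulty to be the bookkeeping for the Alexander--Hirschowitz exceptions. The quadric range has to be stated so that it agrees with $\{2,\ldots,\lfloor\frac{1}{n+1}\binom{n+2}{2}\rfloor\}$ modulo superabundance: for $d=2$, every $r$ with $2\le r\le n$ is either defective or superabundant, so the union of the two conditions is what matters, not the precise endpoint. The other point needing care is confirming that generic finiteness of the mixture parametrization is equivalent to nondefectivity. This holds because the parametrization is dominant onto $\sigma_r$ (by the DSS equality) and has the expected number of parameters, and that parameter count in turn depends on the affine-hull hypothesis.
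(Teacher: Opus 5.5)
Your proposal is correct and is essentially the argument the paper leaves implicit. You identify $\mathcal M_{\mathcal P,d}$ and $\mathcal M_{\mathcal P^{(1)},d}$ with $\nu_d(\PP^n)$ and $\tau(\nu_d(\PP^n))$ via \Cref{moment varieties of local mixtures are osculating varieties of veroneseans}, pass to mixtures using $\mathcal M_{\mathcal P_r,d}=\sigma_r(\mathcal M_{\mathcal P,d})$, and then read the exceptions off Alexander--Hirschowitz (discarding $(4,3,9)$ as superabundant) and \Cref{tangential secant variety defectivity}.

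One small tightening: $\dim\tau(\nu_d(\PP^n))=2n$ holds for every $d\geq 2$ and $n\geq 1$, including $(d,n)=(2,1)$, where it fills $\PP^2$. So drop the hedge ``when $n$ is large'', since the full statement is exactly what covers the non-exceptional case $d=2$, $r=1$ for small $n$. The case $d=1$ needs nothing, because every $r\geq 1$ then falls under (c).
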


Algebraic identifiability tells us about generic sets of points, but what about specific sets of points?  We define the following in the spirit of \cite{R71}.

\begin{definition}
  A family $\mathcal P$ of probability distributions with parameter space $\Theta$ is \textit{locally identifiable} (from its moments of degree at most $d$) at $\vb\theta\in\Theta$ if $\vb\theta$ is an isolated point (in the Euclidean topology on $\overline\Theta$) of $\vb m_d^{-1}(\vb m_d(\vb\theta))$.
\end{definition}

Note that if $\mathcal P$ is algebraically identifiable, then it is locally identifiable at almost all $\vb\theta\in\Theta$.  But are there any $\vb\theta$ for which it is not locally identifiable?  When dealing with mixtures, this is where the Terracini locus comes into play, as we see in this consequence of \cite[Proposition 6.2]{BC21} in the language of algebraic statistics.

\begin{theorem}
  Suppose $\mathcal P$ is a family of probability distributions.  If there exist $\vb\lambda\in\relint\Delta_{r-1}$ and $\vb\theta_1,\ldots,\vb\theta_r\in\Theta$ for which
  \begin{enumerate}[(a)]
  \item $\mathcal P$ is locally identifiable at each $\vb\theta_i$,
  \item $\mathcal P_r$ is not locally identifiable at $(\vb\lambda,\vb\theta_1,\ldots,\vb\theta_r)$, and
  \item the $[\vb m_{\mathcal P,d}(\vb\theta_i)]$ are distinct smooth points of $\mathcal M_{\mathcal P,d}$,
  \end{enumerate}
  then $\{[\vb m_{\mathcal P,d}(\vb\theta_1)],\ldots,[\vb m_{\mathcal P,d}(\vb\theta_r)]\}\in\Ter_r(\mathcal M_{\mathcal P,d})$.
\end{theorem}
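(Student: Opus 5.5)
The plan is to argue by contraposition, following the usual Terracini-lemma / fiber-dimension argument as in \cite[Proposition 6.2]{BC21}. Write $X=\mathcal M_{\mathcal P,d}$, $N=\binom{n+d}{d}-1$, $k=\dim X$, and let $\Phi:\Delta_{r-1}\times\Theta^r\to\CC^{N+1}$, $(\vb\lambda,\vb\theta_1,\ldots,\vb\theta_r)\mapsto\sum_i\lambda_i\vb m_d(\vb\theta_i)$, be the moment map of $\mathcal P_r$. We suppose $\Gamma=\{[\vb m_d(\vb\theta_i)]\}\notin\Ter_r(X)$, so $\dim\langle T_{p_i}X\rangle=r(k+1)-1$ with $p_i=[\vb m_d(\vb\theta_i)]$. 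From this and hypotheses (a) and (c) we will show that $\Phi$ is an immersion at $(\vb\lambda,\vb\theta_1,\ldots,\vb\theta_r)$. Then the inverse function theorem makes the point isolated in its fiber, contradicting (b).

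First, use (a) and (c) to control the single-component map. Since $p_i$ is smooth and $\vb\theta_i$ is isolated in $\vb m_d^{-1}(\vb m_d(\vb\theta_i))$, I would argue that $d\vb m_d$ at $\vb\theta_i$ has image equal to the affine cone $\widehat T_{p_i}X$, up to the radial direction. This step is the main obstacle: local identifiability gives only finiteness of the fiber, not injectivity of the differential. For a polynomial or rational parametrization whose image is $X$, I will handle it as follows. Since $\dim\Theta=k$ (implicit once $\mathcal P$ is identifiable), the generic rank of $d\vb m_d$ is $k$. I expect that at a smooth point with isolated fiber one can either pass to a neighborhood where the rank is $k$, or, more simply, observe that in the end we only need the sum of the images of the differentials together with the vectors $\vb m_d(\vb\theta_i)$ to have dimension $r(k+1)$. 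If the rank drops, I would instead replace the parametrization locally by a smooth chart of $X$ and note that the fiber of $\Phi$ over our point maps finitely to the fiber of the corresponding chart map.

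Next, compute $d\Phi$. Its image is spanned by the vectors $\vb m_d(\vb\theta_i)$ (coming from the $\lambda$-directions, restricted to $\sum\lambda_i=0$ together with scaling) and by $\lambda_i\,d\vb m_d(\vb\theta_i)$. Since $\lambda_i>0$ by $\vb\lambda\in\relint\Delta_{r-1}$, this span is $\sum_i\widehat T_{p_i}X$ intersected with the affine constraint. This is exactly the Terracini computation. By the non-Terracini assumption, $\sum_i\widehat T_{p_i}X$ is a direct sum of dimension $r(k+1)$, and accounting for the constraint $\sum\lambda_i=1$ (one dimension) matches $\dim(\Delta_{r-1}\times\Theta^r)=r-1+rk$. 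Hence $d\Phi$ is injective.

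Finally, an injective differential gives, by the constant-rank theorem, that $\Phi$ is locally injective near $(\vb\lambda,\vb\theta_1,\ldots,\vb\theta_r)$, so the point is isolated in its fiber. That is, $\mathcal P_r$ is locally identifiable there, contradicting (b). Therefore $\Gamma\in\Ter_r(X)$.
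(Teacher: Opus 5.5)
Your overall strategy (contraposition, showing that when $\Gamma\notin\Ter_r(X)$ the mixture moment map is locally injective at $(\vb\lambda,\vb\theta_1,\ldots,\vb\theta_r)$) is sound. It is essentially the mechanism behind the result the paper cites: the paper gives no argument beyond invoking \cite[Proposition 6.2]{BC21}, which concerns points of $X$ rather than parameters. There is, however, a genuine gap exactly at the step you flagged, and the written argument does not close it. Your third and fourth paragraphs conclude that $d\Phi$ is injective. That requires each $d\vb m_d(\vb\theta_i)$ to be injective with image complementary to $\vb m_d(\vb\theta_i)$ in $\widehat T_{p_i}X$, and hypotheses (a) and (c) do not give this. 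For instance, take a family that is locally identifiable at a parameter with first coordinate $0$, and reparametrize it by $\vartheta_1=t^3$. Local identifiability at $t=0$, the moment variety, and smoothness of the image point are all unchanged. Yet the new moment map has $\partial_t\vb m_d=3t^2\,\partial_{\vartheta_1}\vb m_d=0$ at $t=0$, so $d\Phi$ has a kernel at every mixture with such a component, whatever $\Gamma$ is. Your other two fallbacks do not help either. Passing to nearby parameters where the rank is $k$ yields local injectivity near those parameters, not near $(\vb\lambda,\vb\theta_1,\ldots,\vb\theta_r)$. The claim that only the span of the differential images together with the $\vb m_d(\vb\theta_i)$ matters fails precisely when the rank drops, since that span is then smaller than $r(k+1)$ regardless of $\Gamma$.

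The chart idea is the right repair, but it must replace the differential computation, not supplement it. Take holomorphic charts $\psi_i\colon U_i\to X\cap\{z_0=1\}$ at the smooth points $p_i$ with $\psi_i(u_i^0)=\vb m_d(\vb\theta_i)$. Set $\Psi(\vb\mu,\vb u)=\sum_i\mu_i\psi_i(u_i)$ on $\{\sum_i\mu_i=1\}\times\prod_iU_i$. Your Terracini computation now goes through verbatim, for three reasons:
\begin{enumerate}[(i)]
\item $\widehat T_{p_i}X=\langle\psi_i(u_i^0)\rangle\oplus\operatorname{im}d\psi_i(u_i^0)$, since the first coordinates ($1$ versus $0$) separate the two pieces;
\item $\lambda_i\neq0$ because $\vb\lambda\in\relint\Delta_{r-1}$;
\item $\sum_i\widehat T_{p_i}X$ is direct by the non-Terracini assumption.
\end{enumerate}
Hence $d\Psi$ is injective, and $\Psi$ is locally injective at $(\vb\lambda,\vb u^0)$. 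Near $(\vb\lambda,\vb\theta)$ one has $\Phi=\Psi\circ\bigl(\mathrm{id}\times(\psi_i^{-1}\circ\vb m_d)_i\bigr)$ with continuous inner map. So any nearby point $(\vb\lambda',\vb\theta')$ of the fiber satisfies $\vb\lambda'=\vb\lambda$ and $\vb m_d(\vb\theta'_i)=\vb m_d(\vb\theta_i)$ for all $i$. Only at this stage does (a) enter, forcing $\vb\theta'_i=\vb\theta_i$. This is exactly the passage from parameters to points of $X$ that the citation of \cite{BC21} leaves implicit. Finally, local identifiability is defined through fibers in $\overline\Theta$ with the Euclidean topology. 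You should therefore run the argument holomorphically on the complexified parameter space rather than on $\Delta_{r-1}\times\Theta^r$; isolatedness among real parameters alone would not contradict (b). With charts you also no longer need $\dim\Theta=k$.
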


\section*{Acknowledgements}
This work was supported by a grant from the Simons Foundation International (SFI-MPS-SSRFA-00012695).  The author also thanks the developers of Macaulay2 \cite{M2}, which was used extensively to explore examples and test conjectures during the early stages of this project.

\section*{Declaration of generative AI and AI-assisted technologies in the manuscript preparation process}

Generative AI (primarily Claude Opus 5.5) was used extensively for brainstorming and proof-reading this paper.  However, almost all of the text was written by the author.  The author takes full responsibility and accountability for the contents of this work.

\bibliography{tangential-terracini}{}

\begin{thebibliography}{CCGO17}

\bibitem[AAR21]{AAR21}
Daniele Agostini, Carlos Am\'endola, and Kristian Ranestad.
\newblock Moment identifiability of homoscedastic {G}aussian mixtures.
\newblock {\em Found. Comput. Math.}, 21(3):695--724, 2021.

\bibitem[AFS16]{AFS16}
Carlos Am\'endola, Jean-Charles Faug\`ere, and Bernd Sturmfels.
\newblock Moment varieties of {G}aussian mixtures.
\newblock {\em J. Algebr. Stat.}, 7(1):14--28, 2016.

\bibitem[AH95]{AH95}
J.~Alexander and A.~Hirschowitz.
\newblock Polynomial interpolation in several variables.
\newblock {\em J. Algebraic Geom.}, 4(2):201--222, 1995.

\bibitem[AH97]{AH97}
J.~Alexander and A.~Hirschowitz.
\newblock Interpolation on jets.
\newblock {\em J. Algebra}, 192(1):412--417, 1997.

\bibitem[AIM07]{AIM07}
Karim Anaya-Izquierdo and Paul Marriott.
\newblock Local mixture models of exponential families.
\newblock {\em Bernoulli}, 13(3):623--640, 2007.

\bibitem[ARS18]{ARS18}
Carlos Am\'endola, Kristian Ranestad, and Bernd Sturmfels.
\newblock Algebraic identifiability of {G}aussian mixtures.
\newblock {\em Int. Math. Res. Not. IMRN}, (21):6556--6580, 2018.

\bibitem[AV18]{AV18}
Hirotachi Abo and Nick Vannieuwenhoven.
\newblock Most secant varieties of tangential varieties to {V}eronese varieties
  are nondefective.
\newblock {\em Trans. Amer. Math. Soc.}, 370(1):393--420, 2018.

\bibitem[Bal05]{B05}
E.~Ballico.
\newblock On the secant varieties to the tangent developable of a {V}eronese
  variety.
\newblock {\em J. Algebra}, 288(2):279--286, 2005.

\bibitem[Bal22a]{B22}
Edoardo Ballico.
\newblock On the secant varieties of tangential varieties.
\newblock {\em J. Pure Appl. Algebra}, 226(12):Paper No. 107132, 24, 2022.

\bibitem[Bal22b]{B22b}
Edoardo Ballico.
\newblock Terracini loci and homogeneous spaces.
\newblock {\em Adv. Pure Appl. Math.}, 13(1):62--74, 2022.

\bibitem[Bal23]{B23}
Edoardo Ballico.
\newblock Terracini loci of codimension 1 and a criterion for partially
  symmetric tensors.
\newblock {\em Commun. Korean Math. Soc.}, 38(1):47--53, 2023.

\bibitem[Bal26]{B26}
Edoardo Ballico.
\newblock On the {H}ilbert functions of the elements of the {T}erracini loci of
  the {V}eronese varieties.
\newblock {\em Collect. Math.}, 77(1):149--160, 2026.

\bibitem[BB24a]{BB24}
Edoardo Ballico and Maria~Chiara Brambilla.
\newblock Minimal {T}erracini loci in projective space.
\newblock {\em Atti Accad. Naz. Lincei Rend. Lincei Mat. Appl.},
  35(2):175--213, 2024.

\bibitem[BB24b]{BB24b}
Edoardo Ballico and Maria~Chiara Brambilla.
\newblock Minimal {T}erracini loci in the plane: gaps and non-gaps.
\newblock {\em Rend. Circ. Mat. Palermo (2)}, 73(6):2195--2204, 2024.

\bibitem[BBF25]{BBF25}
E.~Ballico, M.~C. Brambilla, and C.~Fontanari.
\newblock Terracini loci and a codimension one {A}lexander--{H}irschowitz
  theorem.
\newblock {\em Collectanea Mathematica}, Dec 2025.

\bibitem[BBS23]{BBS23}
Edoardo Ballico, Alessandra Bernardi, and Pierpaola Santarsiero.
\newblock Terracini locus for three points on a {S}egre variety.
\newblock {\em Asian J. Math.}, 27(3):375--404, 2023.

\bibitem[BC21]{BC21}
Edoardo Ballico and Luca Chiantini.
\newblock On the {T}erracini locus of projective varieties.
\newblock {\em Milan J. Math.}, 89(1):1--17, 2021.

\bibitem[BC24]{BC24}
Edoardo Ballico and Luca Chiantini.
\newblock Terracini loci of curves.
\newblock {\em Rev. Mat. Complut.}, 37(2):447--465, 2024.

\bibitem[BCGI09]{BCGI09}
A.~Bernardi, M.~V. Catalisano, A.~Gimigliano, and M.~Id\`a.
\newblock Secant varieties to osculating varieties of {V}eronese embeddings of
  {$\Bbb P^n$}.
\newblock {\em J. Algebra}, 321(3):982--1004, 2009.

\bibitem[BV23]{BV23}
Edoardo Ballico and Emanuele Ventura.
\newblock A note on very ample {T}erracini loci.
\newblock {\em Rend. Circ. Mat. Palermo (2)}, 72(8):4115--4118, 2023.

\bibitem[CCGO17]{CCGO17}
Maria~Virginia Catalisano, Luca Chiantini, Anthony~V. Geramita, and Alessandro
  Oneto.
\newblock Waring-like decompositions of polynomials, 1.
\newblock {\em Linear Algebra Appl.}, 533:311--325, 2017.

\bibitem[CG24]{CG24}
Luca Chiantini and Fulvio Gesmundo.
\newblock Decompositions and {T}erracini loci of cubic forms of low rank.
\newblock In {\em Deformation of {A}rtinian algebras and {J}ordan type}, volume
  805 of {\em Contemp. Math.}, pages 139--155. Amer. Math. Soc., [Providence],
  RI, [2024] \copyright 2024.

\bibitem[CGG02]{CGG02}
M.~V. Catalisano, A.~V. Geramita, and A.~Gimigliano.
\newblock On the secant varieties to the tangential varieties of a
  {V}eronesean.
\newblock {\em Proc. Amer. Math. Soc.}, 130(4):975--985, 2002.

\bibitem[Chi03]{C03}
Jaydeep~V. Chipalkatti.
\newblock On equations defining coincident root loci.
\newblock {\em J. Algebra}, 267(1):246--271, 2003.

\bibitem[CLO07]{CLO07}
David Cox, John Little, and Donal O'Shea.
\newblock {\em Ideals, varieties, and algorithms}.
\newblock Undergraduate Texts in Mathematics. Springer, New York, third
  edition, 2007.
\newblock An introduction to computational algebraic geometry and commutative
  algebra.

\bibitem[DSS09]{DSS09}
Mathias Drton, Bernd Sturmfels, and Seth Sullivant.
\newblock {\em Lectures on algebraic statistics}, volume~39 of {\em Oberwolfach
  Seminars}.
\newblock Birkh\"auser Verlag, Basel, 2009.

\bibitem[EGH96]{EGH96}
David Eisenbud, Mark Green, and Joe Harris.
\newblock Cayley-{B}acharach theorems and conjectures.
\newblock {\em Bull. Amer. Math. Soc. (N.S.)}, 33(3):295--324, 1996.

\bibitem[Ger96]{G96}
Anthony~V. Geramita.
\newblock Inverse systems of fat points: {W}aring's problem, secant varieties
  of {V}eronese varieties and parameter spaces for {G}orenstein ideals.
\newblock In {\em The {C}urves {S}eminar at {Q}ueen's, {V}ol.\ {X} ({K}ingston,
  {ON}, 1995)}, volume 102 of {\em Queen's Papers in Pure and Appl. Math.},
  pages 2--114. Queen's Univ., Kingston, ON, 1996.

\bibitem[GKW20]{GKW20}
Alexandros Grosdos~Koutsoumpelias and Markus Wageringel.
\newblock Moment ideals of local {D}irac mixtures.
\newblock {\em SIAM J. Appl. Algebra Geom.}, 4(1):1--27, 2020.

\bibitem[GS]{M2}
Daniel~R. Grayson and Michael~E. Stillman.
\newblock Macaulay2, a software system for research in algebraic geometry.
\newblock Available at \url{https://macaulay2.com/}.

\bibitem[GSTT25]{GSTT25}
Francesco Galuppi, Pierpaola Santarsiero, Douglas~A. Torrance, and
  Ettore~Teixeira Turatti.
\newblock Geometry of first nonempty {T}erracini loci.
\newblock {\em Communications in Contemporary Mathematics}, 0(ja):null, 2025.

\bibitem[Har77]{H77}
Robin Hartshorne.
\newblock {\em Algebraic geometry}, volume No. 52 of {\em Graduate Texts in
  Mathematics}.
\newblock Springer-Verlag, New York-Heidelberg, 1977.

\bibitem[Hir81]{H81}
A.~Hirschowitz.
\newblock Sur la postulation g\'en\'erique des courbes rationnelles.
\newblock {\em Acta Math.}, 146(3-4):209--230, 1981.

\bibitem[Hir85]{H85}
Andr\'e Hirschowitz.
\newblock La m\'ethode d'{H}orace pour l'interpolation \`a{} plusieurs
  variables.
\newblock {\em Manuscripta Math.}, 50:337--388, 1985.

\bibitem[HRSY26]{HRSY26}
Oskar Henriksson, Kristian Ranestad, Lisa Seccia, and Teresa Yu.
\newblock Moment varieties of the inverse {G}aussian and gamma distributions
  are nondefective.
\newblock {\em J. Symbolic Comput.}, 132:Paper No. 102460, 26, 2026.

\bibitem[HSY24]{HSY24}
Oskar Henriksson, Lisa Seccia, and Teresa Yu.
\newblock Moment varieties from inverse {G}aussian and gamma distributions.
\newblock {\em Algebr. Stat.}, 15(2):329--355, 2024.

\bibitem[Kur12]{K12}
Simon Kurmann.
\newblock Some remarks on equations defining coincident root loci.
\newblock {\em J. Algebra}, 352:223--231, 2012.

\bibitem[LM24]{LM24}
Antonio Laface and Alex Massarenti.
\newblock Ample bodies and {T}erracini loci of projective varieties.
\newblock {\em Bull. Lond. Math. Soc.}, 56(1):396--410, 2024.

\bibitem[Mar02]{M02}
Paul Marriott.
\newblock On the local geometry of mixture models.
\newblock {\em Biometrika}, 89(1):77--93, 2002.

\bibitem[MM17]{MM17}
Vahed Maroufy and Paul Marriott.
\newblock Mixture models: building a parameter space.
\newblock {\em Stat. Comput.}, 27(3):591--597, 2017.

\bibitem[Pea94]{P94}
Karl Pearson.
\newblock Contributions to the mathematical theory of evolution.
\newblock {\em Philosophical Transactions of the Royal Society of London. A},
  185:71--110, 1894.

\bibitem[Rot71]{R71}
Thomas~J. Rothenberg.
\newblock Identification in parametric models.
\newblock {\em Econometrica}, 39:577--591, 1971.

\bibitem[Ter11]{T11}
Alessandro Terracini.
\newblock Sulle ${V}_k$ per cui la variet\`a degli ${S}_h(h+ 1)$-seganti ha
  dimensione minore dell'ordinario.
\newblock {\em Rendiconti del Circolo Matematico di Palermo (1884-1940)},
  31:392--396, 1911.

\end{thebibliography}
\bibliographystyle{alpha}
\end{document}